\providecommand{\algorithmname}{Algorithm}
\newtheorem{thm}{Theorem}[section]
\newtheorem{lem}{Lemma}[section]
\newtheorem{remark}{Remark}[section]
\newtheorem{cor}{Corollary}[section]
\newtheorem{prop}{Proposition}[section]
\newtheorem{exa}{Example}
\newtheorem{asm}{Assumption}
\def \bP {\mathbb{P}}
\def \bE {\mathbb{E}}
\newcommand{\diff}{\mathrm{d}} 
\newcommand{\ZN}{\hat{Z}_N}
\newcommand{\FN}{\hat{F}_N}
\newcommand{\x}{X} 
\newcommand{\y}{Y} 
\newcommand{\bfy}{\mathbf{Y}^N}
\newcommand{\bfx}{\mathbf{X}^N} 
\newcommand{\bfxstar}{\mathbf{X}^{N,\star}} 
\newcommand{\bfxprime}{\mathbf{X}^{N,\prime}} 
\newcommand{\bfxsmall}{\mathbf{x}^N} 
\newcommand{\bfxsmallstar}{\mathbf{x}^{N,\star}} 
\newcommand{\bfysmall}{\mathbf{y}^N}
\newcommand{\FNu}{\hat{F}_{N}^{u}}
\newcommand{\FNusym}{\tilde{F}_{N}^{u}}
\newcounter{todocounter}
\begin{document}
\tolerance=1000
\title{On importance sampling and independent Metropolis--Hastings with an unbounded weight function}
\author{George Deligiannidis\thanks{george.deligiannidis@stats.ox.ac.uk}~ (University of Oxford), Pierre E. Jacob\thanks{pierre.jacob@essec.edu}~ (ESSEC Business
School),\\ El Mahdi Khribch\thanks{elmahdi.khribch@essec.edu}~ (ESSEC Business
School), Guanyang Wang\thanks{guanyang.wang@rutgers.edu}~ (Rutgers University)} 

\maketitle

\begin{abstract}
    Importance sampling and  independent Metropolis--Hastings are among the
	  fundamental building blocks of Monte Carlo methods.  Both require a
	  proposal distribution that globally approximates the target
	  distribution, and pointwise evaluation of the Radon--Nikodym
	  derivative of the target distribution relative to the proposal, also
	  called the weight function.  We study the bias of importance sampling
	  and independent Metropolis--Hastings, without assuming that the
	  weight function is bounded.  We show that the common random numbers
	  coupling of independent Metropolis--Hastings is maximal.  Using that
	  coupling, we derive polynomial bounds on the total variation distance
	  of the chain to its target distribution.  We further consider bias
	  removal techniques using couplings, and provide conditions under
	  which the resulting unbiased estimators have finite moments, and
	  under which their efficiency is comparable to that of importance
	  sampling. Experiments illustrate unbiased estimators of the inverse
	  of a normalizing constant, estimators of nested expectations, and
	  combination of importance sampling with robust mean estimation
	  methods.
\end{abstract}


\section{Introduction\label{sec:intro}}%

\subsection{Context and contributions\label{subsec:intro:context}}

\subsubsection{Monte Carlo with global proposals\label{subsec:intro:globalproposals}} Monte Carlo methods aim to approximate a target distribution $\pi$ on a measurable space $(\mathbb{X},\mathcal{X})$, for example $(\mathbb{R}^d,\mathcal{B}(\mathbb{R}^d))$. These techniques are crucial when analytical computation of expectations under $\pi$ is infeasible. 
The goal is to evaluate integrals of functions \(f: \mathbb{X} \longrightarrow \mathbb{R}\) with respect to  \(\pi\):
\begin{align}
  \pi(f) := \mathbb{E}_{\pi}[f] = \int_{\mathcal{X}} f(x) \pi(x) \, \diff x.
\end{align}
Two primary approaches are Markov chain Monte Carlo (MCMC) methods, that construct a Markov chain with $\pi$ as its stationary distribution, and importance sampling (IS) methods, where the target distribution is approximated by weighted samples. Among MCMC methods, the independent Metropolis--Hastings (IMH) algorithm is a specialized form of the Metropolis--Rosenbluth--Teller--Hastings algorithm~\citep{metropolis1953equation,hastings:1970}, in which proposals are drawn from a proposal distribution $q$, independently of the current state of the chain.
We assume that we can sample from $q$ on $(\mathbb{X},\mathcal{X})$, such that
$\pi$ is absolutely continuous with respect to $q$, and we can evaluate the Radon--Nikodym derivative $\omega(x) = \pi(x)/q(x)$ up to a multiplicative constant. We call $\omega$ the \emph{weight} function. 
The same proposal $q$ can be employed in importance sampling \citep[IS,][]{kahn1949stochastic}, where draws from $q$ are weighted by $\omega$.
Rejection sampling could be implemented if $\omega$ was bounded by a known constant, but in this article we let  $\omega$ be unbounded.

\subsubsection{Comparing MH with independent proposals and importance sampling\label{subsec:intro:bias}}
Independent MH and IS are two approaches to account for the discrepancy between a proposal $q$ and a target $\pi$, and their comparison is a natural question. The asymptotic variance of self-normalized IS (SNIS) estimators is lower than that of IMH estimators \citep{deligiannidis2018ergodic}, see \eqref{eq:imh_asymvar_greater} below. In both cases, the squared bias is negligible relative to the variance as the computing effort goes to infinity, and thus the bias can be ignored from the perspective of the asymptotic mean squared error. Perhaps for this reason,
the comparison of the bias of SNIS and IMH has received little attention, particularly in the general setting where the weight $\omega$ may be unbounded. 
However, when the resulting biased estimators are used for downstream tasks, the non-asymptotic bias matters and can even become the dominant issue. 
We review multiple such settings in Section~\ref{subsec:advantages_low_bias}.

The question of bias can be rephrased as follows for bounded test functions.
Among $N$ independent draws $(\x_n)_{n=1}^N$ from $q$, consider the selection of an index $I\in[N]=\{1,\ldots,N\}$ such that $\x_I$ is as close as possible to $\pi$ in total variation distance. Is it preferable to perform $N$ steps of IMH and select the terminal state, or to draw $I$ from a Categorical distribution with probabilities proportional to $\omega(\x_1),\ldots,\omega(\x_N)$, as in Sampling-Importance-Resampling (SIR) \citep{db1988using}? Our developments lead to the conclusion that, in some generality, IMH leads to a smaller bias than SIR; see Section~\ref{subsec:comparison} for a summary.
We further explore methods of entirely removing such biases, following \citet{middleton2019unbiased}, and study their asymptotic efficiency in Section~\ref{sec:biasremoval_snis}.


\subsubsection{Effect of the bias of Monte Carlo estimators\label{subsec:advantages_low_bias}}
We review scenarios where the bias of Monte Carlo estimators has noticeable and often detrimental effects on downstream tasks, motivating both its study and its removal. In these settings, the bias interacts with downstream tasks in a different way than the variance, and thus matters even though its square may be small compared to the variance. 

\paragraph{Optimization with stochastic gradients}
Monte Carlo estimators are often used
in the approximation of gradients within stochastic gradient algorithms. The effect of the bias of gradient estimators in the encompassing optimization has received attention 
\citep[e.g.][]{doucet2017asymptotic,Hu2021,demidovich2023zoo,surendran2024biasedadaptive}.
The literature shows that the bias can induce a non-vanishing error, affect the convergence rate, or have no significant effect, depending on its magnitude. The case of 
IS-based gradient estimators employed in training importance-weighted autoencoders (IWAEs) has received
particular attention: the bias of gradient estimators
is studied in \citet[Theorem~1]{daudel2024learning},
and is shown to propagate 
into the encompassing optimization in \citet[Corollary~4.5, Theorem~4.6 and Theorem~B.1]{surendran2024biasedadaptive},
and in \citet[Theorem~3.7]{surendran2025vae}.
A number of bias-reduction techniques have been proposed
in the context of IWAEs, including jackknife correction \citep[Proposition~3]{nowozin2018debiasing}, randomized multilevel Monte Carlo \citep[Theorems~2 and~3]{ShiCornish}, and the iterated SIR \citep[Theorems~3--4]{cardoso2022br}, assuming bounded weights. Experiments in \citet{ShiCornish,cardoso2022br,surendran2024biasedadaptive} show that, relative to SNIS gradient estimators, reducing or removing the bias can be overall beneficial at matched computational cost.

\paragraph{Nested expectations\label{subsec:nested_expectations_bias}}
The bias of Monte Carlo estimators has a particularly clear effect in the context of nested expectations \citep{andradottir2016computing}.
The quantity of interest is of the form
\begin{equation}\label{eq:nested_def}
  \bE_{1}\left[\bE_{2|1}[f(\Theta_1,\Theta_2)|\Theta_1]\right],
\end{equation}
where $\bE_{2|1}$ denotes a conditional expectation of $\Theta_2$ given $\Theta_1$,
and $f$ is a test function. We assume that we are able to sample $\Theta_1$ from its marginal distribution, and that we can approximate $\bE_{2|1}[f(\Theta_1,\Theta_2)|\Theta_1]$ for each $\Theta_1$ by a numerical method, such as SNIS or IMH.
Such nested expectations arise in modular Bayesian inference \citep{liu2009,plummer2014cuts}, Bayesian inference after multiple imputation \citep{zhou2010note} or experimental design \citep{rainforth2018nesting}.

Assume that, for each realization $\theta_1$ of $\Theta_1$, the conditional expectation 
$\bE_{2|1}[f(\Theta_1,\Theta)|\Theta_1=\theta_1]$ is approximated by an estimator $\FN(\theta_1)$  with cost $N$, 
bias of order $N^{-\gamma}$ for some $\gamma\in(0,\infty)$, and variance of order $N^{-1}$.
Consider the estimator $M^{-1}\sum_{m=1}^M \FN(\Theta_{1,m})$, where $\Theta_{1,1},\ldots,\Theta_{1,M}$ are i.i.d.\ draws from the marginal distribution of $\Theta_1$. The total cost is $C = NM$, and optimizing the budget allocation leads to an overall mean squared error of order $C^{-2\gamma/(2\gamma+1)}$ \citep[Remarks 3.5-3.6,][]{andradottir2016computing}. Therefore, the magnitude of the bias of the inner estimator directly impacts the overall rate of convergence of the estimator of the nested expectation of interest,
and therefore any reduction in bias brings concrete benefits in the overall mean squared error.
Furthermore, unbiased estimators for the inner expectation recover the canonical Monte Carlo rate for the nested expectation. We illustrate this in Subsection~\ref{subsec:applications:nestedexpectations}.

\paragraph{Plug-and-play property of unbiased estimators\label{subsec:plug_and_play}}
The effect of the bias of a Monte Carlo estimator in an encompassing method is \emph{a priori} unknown, and the magnitude of that bias is typically hard to bound explicitly. Removing the bias offers a plug-and-play solution. We illustrate this in Section~\ref{subsec:robust_mean_estimation} with the setting of robust mean estimation. Robust mean estimators, such as median-of-means, typically assume access to variables that have expectation equal to the quantity of interest. Biased estimators can be used instead, as done by \citet{Dau2022} with SNIS within median-of-means, but not without adjustments in both theory and implementation.

\subsubsection{Our contributions\label{subsec:intro:contributions}} We consider the bias of self-normalized importance sampling and independent Metropolis--Hastings, and methods to remove that bias. 
Our key assumption is that the \emph{weight} function $\omega$ has $p$ finite moments under the proposal $q$. We obtain results on the bias and higher moments of self-normalized importance sampling estimators in Section~\ref{sec:asbiasis}.
Using a common random numbers coupling studied in Section~\ref{sec:crncoupling}, we show in Section~\ref{sec:imh} that the total variation distance between $\pi$ and the IMH chain at iteration $t$ decays as $t^{p-1}$; this provides polynomial bounds on the bias for bounded test functions. We provide an example for which we obtain matching lower bounds, up to logarithmic terms,  suggesting that our working assumption of $p$ finite moments of $\omega$ under $q$ is close to necessary for polynomial convergence rates.
We further obtain explicit dependence in $N$ for the particle IMH algorithm \citep{andrieu:doucet:holenstein:2010}, a variant of IMH where $N$ proposals are sampled at each iteration.
To establish precise bounds that account for both $t$ and $N$, we develop novel methods to analyze the average weight, $\ZN = N^{-1} \sum_{n=1}^{N} \omega(\x_n)$, and thereby control the rejection probability. We first use the \textit{Paley--Zygmund inequality} ~\citep{petrov2007lower} (an anticoncentration inequality) to provide a lower bound on the probability of $\ZN$ being small, specifically controlling its behavior when $\ZN \le 2$. For larger values of $\ZN$, we divide the range into an intermediate section $(2, 1+t)$ and a large section $[1+t, \infty)$. The \textit{Markov inequality} directly handles the large range. For the intermediate range, we employ a \textit{peeling argument}, breaking it down into a union of smaller intervals and applying the Markov inequality to each. Our multiscale analysis, combining both concentration and anticoncentration inequalities, allows us to control the $t$-th moment of the rejection probability, leading to a total-variation bound via the coupling inequality.
In Section~\ref{sec:biasremoval_snis} we consider the bias removal technique of \citet{glynn2014exact} applied by \citet{middleton2019unbiased} to the particle IMH algorithm. 
We provide conditions under which these unbiased estimators have finite moments,
and conditions under which their efficiency is asymptotically equivalent to that of self-normalized importance sampling. This distinguishes the bias removal technique of \citet{middleton2019unbiased} from other debiasing strategies for IS \citep{blanchet2015unbiasedtaylor,blanchet2015unbiasedmultilevel,ShiCornish}.
Section~\ref{sec:applications} presents numerical illustrations of these unbiased variants of importance sampling, for the estimation of the inverse of the normalizing constant of a distribution, for a nested expectation as in Section~\ref{subsec:nested_expectations_bias}, and in the context of robust mean estimation
as in Section~\ref{subsec:plug_and_play}.

\subsection{Importance sampling\label{subsec:intro:is}}

Self-normalized importance sampling (IS) is described in Algorithm~\ref{alg:snis}, see also Chapter 9.2 in \citet{mcbook}. 
Central 
to IS is the weight function:
\begin{equation}\label{eq:weightfunction}
  \omega:x\mapsto \frac{\pi(x)}{q(x)},\quad \text{so that} \quad q(\omega) = 1.
\end{equation}
Since multiplicative constants in $\omega$ have no effect on the IS estimator \eqref{eq:def:Fhat},
it can be computed as long as the user can evaluate a function proportional to $\omega$ in \eqref{eq:weightfunction}.
Unless specified otherwise, by IS we refer to the self-normalized version as in Algorithm~\ref{alg:snis}
rather than the more basic estimator $N^{-1} \sum_{n=1}^N \omega(\x_n) f(\x_n)$
that depends on the multiplicative constant in $\omega$.

\begin{algorithm}[ht]
  \begin{enumerate}
    \item Sample \(N\) particles, \(\x_{1}, \ldots, \x_{N}\), independently from \(q\).
    \item Compute the importance weights: \(\omega(\x_{n}) = \pi(\x_{n}) / q(\x_{n})\) for \(n \in [N]=\{1,\cdots,N\}\).
    \item Compute the normalizing constant estimator:
    \begin{equation}\label{eq:def:Zhat}
      \ZN(\x_{1}, \ldots, \x_{N}) = N^{-1} \sum_{n=1}^{N} \omega(\x_{n}).
    \end{equation}
    \item For any test function \(f\), compute the SNIS estimator
    \begin{equation}\label{eq:def:Fhat}
      \FN(\x_{1}, \ldots, \x_{N}) = \frac{\sum_{n=1}^{N} \omega(\x_{n}) f(\x_{n})}{\sum_{n=1}^{N} \omega(\x_{n})}.
    \end{equation}
  \end{enumerate}
  \caption{Self-normalized importance sampling. 
  \label{alg:snis}
  }
\end{algorithm}

For brevity we sometimes write $\bfx$ for $(\x_1,\ldots,\x_N)$, and $\FN$ for $\FN(\bfx)$ -- the IS estimator, and $\ZN$
for $\ZN(\bfx)$ -- the normalizing constant estimator, when the dependence on $\bfx$ is clear from the context.
We make the following assumption throughout.

\begin{asm}\label{asm:abscontinuitypositivity}
  For any measurable set $A\in\mathcal{X}$, if \(q(A) = 0\), then \(\pi(A) = 0\). Furthermore, $\omega(\x)$ with $\x\sim q$ is almost surely positive, and $q(\omega) = 1$.
\end{asm}
Under Assumption~\ref{asm:abscontinuitypositivity}, if $\pi(f)$ exists then $ \FN(\x_{1}, \ldots, \x_{N}) \to \pi(f)$ as $N\to\infty$ almost surely.
The asymptotic variance of $\FN$ can be directly computed from the delta method \citep{mcbook,robert:casella:2004,liu2008monte},
assuming $q(\omega^2\cdot f^2)<\infty$ and $q(\omega^2)<\infty$,
\begin{equation}\label{eq:is_asymvar}
  \lim_{N\to\infty}\mathbb{V}\left[\sqrt{N}(\FN(\x_1,\ldots,\x_N) - \pi(f))\right] = q(\omega^2 \cdot (f-\pi(f))^2) =: \sigma^2_{\textsc{IS}}.
\end{equation}
Non-asymptotic bounds on the mean squared error and on the bias
of $\FN$ are derived in \citet{agapiou_2014}. These bounds are inversely proportional to the number $N$ of draws from $q$.
The exact form of the asymptotic bias of the IS estimator $\FN$ is well-known \citep[e.g.][]{hesterberg1988,liu2008monte}, and we provide a formal
statement in Section~\ref{sec:asbiasis}.

\subsection{Independent Metropolis--Hastings\label{subsec:intro:imh}}

Independent Metropolis-Hastings (IMH) is an instance of the Metropolis--Rosenbluth--Teller--Hastings algorithm \citep[Section 2.5]{hastings:1970}, described in Algorithm \ref{alg:IMH}. 
The IMH chain is aperiodic and $\pi$-invariant by design, whereas under  Assumption~\ref{asm:abscontinuitypositivity}, it  is also $\pi$-irreducible; thus 
for $\pi$-almost every $x$, $|P^t(x,\cdot)-\pi|_{\mathrm{TV}}\to 0$ as $t\to\infty$ \citep[Theorem 4 in][]{roberts2004general},
where $P$ denotes the transition kernel of IMH, $P^t$ denotes the $t$-steps transition kernel, and $|\mu - \nu|_{\text{TV}} = \sup_{A\in\mathcal{X}} \mu(A) - \nu(A)$.

The asymptotic variance of the ergodic average $t^{-1}\sum_{s=0}^{t-1} f(\x_s)$ generated by IMH, denoted by $\sigma^2_{\textsc{IMH}}$, 
is finite if and only if $\pi(f^2)<\infty$ and $q(\omega^2 \cdot f^2)<\infty$ \citep[Theorem 2 in][]{deligiannidis2018ergodic}. Furthermore, if $\sigma^2_{\textsc{IMH}}$ is finite then \citet[Proposition 2,][]{deligiannidis2018ergodic} provides a general comparison:
\begin{equation}\label{eq:imh_asymvar_greater}
  \sigma^2_\textsc{IS} \leq \sigma^2_{\textsc{IMH}},
\end{equation}
with $\sigma^2_{\textsc{IS}}$ as in \eqref{eq:is_asymvar}. Thus IS outperforms IMH in asymptotic variance. Note that, since IMH defines a Markov transition, it can be used directly as a step within an encompassing Gibbs sampler \citep{skare2003improved}, and it is commonly used within sequential Monte Carlo samplers \citep{chopin:2002,South2019}; thus IMH has its specific uses irrespective of the performance comparison with importance sampling.

\begin{algorithm}[!ht]
  \begin{enumerate}
    \item Draw $\x^{\star} \sim q$.
    \item Compute the acceptance probability:
    \[
      \alpha(\x,\x^{\star}) = \min\left\{1,\frac{\omega(\x^{\star})}{\omega(\x)}\right\}.\]
    \item Draw  $U$ from a $\text{Uniform}(0,1)$ distribution, independently of $\x^{\star}$.
    \item If $U<\alpha(\x,\x^{\star})$, return $\x^{\star}$, otherwise return $\x$.
  \end{enumerate}
  \caption{One step of the IMH algorithm starting from state $\x$.}\label{alg:IMH}
\end{algorithm}

When it comes to non-asymptotic behavior, for IMH there is an important distinction between two cases \citep{mengersen1996rates}: either the weight is bounded, in which case the chain is geometrically ergodic and exact rates are obtained in \citet{wang2022exact}, or the weight is unbounded, and the convergence cannot be geometric; in the latter case, various results are provided 
e.g. in \citet{jarner2002polynomial,douc2007computable,roberts2011quantitative,andrieu2022comparison} and \citet[][Chapter 17]{douc2018MarkovChains}; see Section~\ref{subsec:relatedworks}.
In Section~\ref{sec:imh} we provide polynomial bounds on the total variation distance to stationarity
under moment conditions on $\omega$ under $q$. Our results enable bias comparison of IS and IMH in Section~\ref{subsec:comparison}, along with practical recommendations.

In the following we consider the particle generalization of IMH (PIMH), where $N$ proposals are drawn at each iteration \citep[Section 4.2]{andrieu:doucet:holenstein:2010};
see Algorithm~\ref{alg:PIMH}. We define the algorithm on the state space $\mathbb{X}^N$,
use boldface to denote its elements, e.g. $\bfx = (\x_1,\ldots,\x_N)\in\mathbb{X}^N$,
and denote the transition kernel by $P$. If $N=1$ the algorithm corresponds to IMH, and our results apply for all $N\geq 1$. To view Algorithm~\ref{alg:PIMH} as a special case of IMH, define for any $N\geq 1$
\begin{align}
  \bar{\pi}(x_1,\ldots,x_N) &= \sum_{k=1}^N \frac{\pi(x_k)}{N}  \prod_{n \neq k} q(x_n) = \left(\frac{1}{N}\sum_{k=1}^N \omega(x_k)\right) \prod_{n=1}^N q(x_n), \label{eq:pimhtarget}\\
  \bar{q}(x_1,\ldots,x_N) &= \prod_{n=1}^{N} q(x_n), \label{eq:pimhproposal}
\end{align}
and, in the case $N=1$, $\bar{\pi}(x_1) = \pi(x_1)$. From \eqref{eq:def:Zhat} and the above definitions, we define the Radon--Nikodym derivative of $\bar{\pi}$ with respect to $\bar{q}$ as:
\begin{equation}
  \label{eq:pimhweight}
  \bar{\omega}(x_1,\ldots,x_N) = \frac{\bar{\pi}(x_1,\ldots,x_N)}{\bar{q}(x_1,\ldots,x_N)} = \frac{1}{N}\sum_{n=1}^N \omega(x_n) = \ZN(x_1,\ldots,x_N).
\end{equation}
Note that $\bE_{\bar{q}}[\bar{\omega}(\bfx)]=N^{-1}\sum_{n=1}^Nq(\omega)=1$ under Assumption~\ref{asm:abscontinuitypositivity}, thus $\bar{\pi}$ is properly normalized.
Hence, IMH as in Algorithm~\ref{alg:IMH}, with proposal $\bar{q}$ and target $\bar{\pi}$, is equivalent to Algorithm~\ref{alg:PIMH}.

\begin{algorithm}[!ht]
  \begin{enumerate}
    \item Draw $\bfxstar=(\x^{\star}_1,\ldots,\x_N^{\star}) \sim \bar{q}$.
    \item Compute the acceptance probability:
    \begin{align}\label{eq:def:pimh-acceptanceratio}
      \alpha(\bfx,\bfxstar) = \min\left\{1,\frac{\ZN(\bfxstar)}{\ZN(\bfx)}\right\}, \quad \text{where } \ZN: (x_1,\ldots,x_N) \mapsto N^{-1}\sum_{n=1}^N \omega(x_n).
    \end{align}
    \item Draw  $U$ from a $\text{Uniform}(0,1)$ distribution, independently of $\bfxstar$.
    \item If $U<\alpha(\bfx,\bfxstar)$, return $\bfxstar$, otherwise return $\bfx$.
  \end{enumerate}
  \caption{One step of the PIMH algorithm starting from state $\bfx=(\x_1,\ldots,\x_N)$.}\label{alg:PIMH}
\end{algorithm}

In order to estimate an expectation $\pi(f)$ from the PIMH output using all the generated particles, 
note that 
\begin{align}
  \bE_{\bar{\pi}}[\FN(\bfx)] &= \int \FN(x_1,\ldots,x_N) \cdot \ZN(x_1,\ldots,x_N) \cdot \bar{q}(x_1,\ldots,x_N) \diff x_1\ldots\diff x_N \nonumber \\
  &=  \int \left\{\frac{1}{N} \sum_{n=1}^N \omega(x_n) f(x_n) \right\}\cdot \bar{q}(x_1,\ldots,x_N) \diff x_1\ldots\diff x_N \nonumber\\
  &=  \int \omega(x_1) f(x_1) q(x_1) \diff x_1 = \pi(f). \label{eq:unbiasedness-pimh_limit}
\end{align}
Thus, we can evaluate $\FN$ at each state of the chain $(\bfx_t)_{t\geq 0}$ generated by the PIMH algorithm.
The ergodic average $T^{-1}\sum_{t=0}^{T-1} \FN(\bfx_t)$ converges to $\pi(f)$ under suitable conditions. 

\subsection{Moment conditions on the weight}

Our key assumption is the following.

\begin{asm}
  \label{asm:pfinitemoments}
  The weights have a finite $p$-th moment for $p > 1$: $q(\omega^{p})<\infty$.
\end{asm}

This is a weak assumption in the context of both self-normalized importance sampling and IMH, where the weight is often assumed to be bounded.
For bounded test functions, Assumption~\ref{asm:pfinitemoments} with $p\geq 2$
is necessary for the asymptotic variance of both self-normalized importance sampling and IMH to be finite.
The case where $p<2$ is also of interest, and we consider it in the study of both SNIS
(Proposition~\ref{prop:snis_moments_p_less_than_2}) and IMH (in Section~\ref{subsec:p_lessthan_2}).

\begin{exa}[Exponential distributions\label{example:expo}]
  Let $\pi$ be the Exponential(1) distribution and let $q$ be the Exponential($k$) distribution with $q(x) = ke^{-kx}$, both on $\mathbb{R}_+$. If $k\leq 1$, the weight $\omega(x)$ is upper bounded by $k^{-1}$, and Assumption~\ref{asm:pfinitemoments} holds for all $p$. If $k > 1$, then $q(\omega^p)<\infty$ holds 
  with any $p <  k/(k-1)$. The example is considered in \citet{jarneretroberts07,roberts2011quantitative,andrieu2022comparison,orenstein2022robust}.
\end{exa}

\begin{exa}[Normal distributions\label{example:normal}]
  Let $\pi$ be the Normal(0,1) distribution and let $q$ be the Normal$(0,\sigma^2)$ distribution, both on $\mathbb{R}$. If $\sigma^2\geq 1$, the weight $\omega(x)$ is upper bounded by $\sigma$, and Assumption~\ref{asm:pfinitemoments} holds for all $p$. If $\sigma^2<1$, then  $q(\omega^p)<\infty$ holds for $p<\sigma^{-2}/(\sigma^{-2}-1)$.
  The example is considered in \citet{roberts2011quantitative,mcbook}.
\end{exa}

Assumption~\ref{asm:pfinitemoments} with $p\geq 2$ implies the following well-known behavior of the average of $N$ independent and identically distributed random variables, obtained from the Marcinkiewicz--Zygmund inequalities. The proof is in Appendix~\ref{appx:proofs:intro}.

\begin{prop}\label{prop:zhatconcentration}
  Let $\bfx=(\x_n)_{n=1}^N$ be $N$ independent random variables with distribution $q$. Under Assumptions~\ref{asm:abscontinuitypositivity}--\ref{asm:pfinitemoments}, with $p \geq 2$, $\ZN(\bfx) = N^{-1}\sum_{n=1}^N \omega(\x_n)$ satisfies, for all $N\geq 1$,
  and $M(p) := 2p\, q(\omega^p)^{1/p}$,
  \begin{align}
    &\mathbb{E}_{\bar{q}}[| \ZN(\bfx)-1|^p]^{1/p} \leq \frac{M(p)}{\sqrt{N}},\label{eq:prop:zhatconcentration:centred}\\
    &\mathbb{E}_{\bar{q}}[ \ZN(\bfx)^p]^{1/p} \leq 1+\frac{M(p)}{\sqrt{N}},\label{eq:prop:zhatconcentration:uncentred}\\
    &\mathbb{P}_{\bar{q}} \left(\ZN(\bfx)\ge 1+t\right) \leq \left(\frac{M(p)}{t\sqrt{N}}\right)^p, \qquad t>0.\label{eq:Z-tail-bound}
  \end{align}
\end{prop}

\begin{remark}\label{rem:smc}
  Some of our results do not require $\ZN(\bfx)$ to be an average of i.i.d.\ weights, but only that $\ZN(\bfx)$ is non-negative, unbiased for the normalizing constant of the target distribution, and satisfies the moment bound~\eqref{eq:prop:zhatconcentration:centred},
  and thus \eqref{eq:prop:zhatconcentration:uncentred} and \eqref{eq:Z-tail-bound} as well since they follow from \eqref{eq:prop:zhatconcentration:centred}.
  These properties may hold, for example, for the normalizing constant estimator generated by sequential Monte Carlo (SMC) samplers, under some conditions; see e.g.\ \citet[Section 16.5]{del2013mean} for related results. Specifically, the results on the coupling of particle IMH and the resulting meeting times in Sections~\ref{sec:crncoupling}-\ref{sec:imh} hold whenever $\ZN(\bfx)$ satisfies these properties, as can be seen from the proofs. However, the results on importance sampling in Section~\ref{sec:asbiasis} and bias removal in Section~\ref{sec:biasremoval_snis} rely on the i.i.d.\ structure of $\bfx$ in multiple places, and do not extend to the SMC setting without modification, which we leave as an open line of research.
\end{remark}

\begin{remark}\label{rem:asm1_p_less_than_2}
  If Assumption~\ref{asm:pfinitemoments} holds for some $p\in (1,2)$, the von Bahr--Esseen inequality \citep{vonbahr_esseen_1965} can be used to obtain a bound similar to \eqref{eq:prop:zhatconcentration:centred} but with $N^{-(p-1)/p}$ instead of $N^{-1/2}$
  on the right-hand side. This is stated in Proposition~\ref{prop:zhatconcentration_less2} of Appendix~\ref{appx:proofs:intro}.
\end{remark}

\section{Bias and moments of importance sampling\label{sec:asbiasis}}

Our first result is a clean statement on the asymptotic bias of self-normalized importance sampling.
Introductory material on importance sampling often notices that the basic importance sampling estimator $N^{-1}\sum_{n=1}^N  f(\x_n)\omega(\x_n)$ is unbiased, but since $\omega$ can only be evaluated up to a multiplicative constant, users may need to resort to the
self-normalized estimator $\FN(\bfx)$ in \eqref{eq:def:Fhat}, which is biased: $\mathbb{E}[\FN(\bfx)] \neq \pi(f)$. 
The form of the asymptotic bias is well known, e.g. Section 2.5. in \citet{liu2008monte}. 
Formal results with self-contained proofs are however hard to find.
Theorem 2 in \citet{skare2003improved} emphasizes
on the pointwise relative error of the density of a particle selected from the IS approximation. Their Remark 1 translates this into
a bound on the bias for bounded functions under the assumption of bounded weights.
The asymptotic bias expression appears in \citet{hesterberg1988}, equation~(2.63), and is justified by the Edgeworth expansion of the cumulative distribution function of $\sqrt{N}(\FN-\pi(f))$, using results from \citet{bhattacharya1978}.
However, as noted in \citet[][p.~40]{hesterberg1988}, the expression is obtained as a first order term in an Edgeworth expansion which does not establish that the actual bias is well-defined or finite.
Moreover, the approach requires third moments of $\omega$ and $\omega f$ under $q$, as well as Cram\'er's condition on the characteristic function of $(\omega, \omega^2, \omega f, \omega^2 f^2)$ \citep[][p.~39]{hesterberg1988}.
Theorem~\ref{thm:asbias_is} below requires fewer positive moments on $\omega$ and $\omega f$, but 
assumes the existence of an inverse moment for $\omega$. The proof in Appendix~\ref{appx:proofs:asbiasis} is elementary.

\begin{thm}\label{thm:asbias_is}
  Let $f$ with $\pi(|f|)<\infty$.
  Let $\bfx=(\x_1,\ldots,\x_N) {\sim} \bar{q}$ as in \eqref{eq:pimhproposal} and let $\FN(\bfx)$ be as in \eqref{eq:def:Fhat}.
  Assume that $q(|\omega^2(f - \pi(f))|^{1+\epsilon}) < \infty$ for some $\epsilon > 0$ and $q(\omega^{-\eta})<\infty$ for some $\eta>0$. Then
  \begin{align}
    \lim_{N\to\infty} N \times \mathbb{E}_{\bar{q}}\left[\FN(\bfx) - \pi(f)\right] &= - \int \left(f(x) -\pi(f)\right) \omega^2(x) q(dx).  \label{eq:asbias_snis}
  \end{align}
\end{thm}

Theorem~\ref{thm:asbias_is} relates to the question raised in Section~\ref{subsec:intro:bias}. Indeed, 
consider a draw $\x_I$ obtained by Sampling-Importance Resampling (SIR).  
Theorem~\ref{thm:asbias_is} applied to bounded test functions shows that the total variation distance between $\x_I$ and $\pi$ is of order $N^{-1}$ as $N\to\infty$. 
Instead of exact asymptotic order, 
\citet[Theorem 2.1,][]{agapiou_2014} provides an upper bound on the bias under weaker assumptions.
\citet[Theorem 2.3,][]{agapiou_2014} provides upper bounds of order $N^{-1}$ also for unbounded test functions,
under moment conditions on $f$ and on $f\cdot \omega$. Our Theorem~\ref{thm:asbias_is}
establishes that $N^{-1}$ is the exact order of the asymptotic bias as a function of $N$, but requires additional conditions.
Theorem~\ref{thm:asbias_is} is similar to results proposed independently in \citet{daudel2024learning}.
Finally, the bias of sequential Monte Carlo estimators is typically also of order $N^{-1}$, where $N$ is the number of particles, see \citet{del2007sharp} and  \citep[Section 15.3,][]{del2013mean}.

We next provide a result on the $s$-th moments of the error in importance sampling for unbounded test functions. Theorem~\ref{thm: snis unbounded convergence} generalizes the MSE part of Theorem 2.3 in \citet{agapiou_2014} to arbitrary orders $s\geq 2$, and its assumptions are weaker in the case $s=2$, as discussed below. The proof is in Appendix~\ref{appx:proofs:asbiasis}. These bounds are used to obtain the results of Section~\ref{sec:biasremoval_snis}, and may be of independent interest.

\begin{thm}\label{thm: snis unbounded convergence}
  Assume that $q(f^2 \cdot \omega^2)<\infty$ and that there exist $p \in [2,\infty)$ and $r \in [2,\infty]$ such that $q(\omega^p) < \infty$ and $q(|f|^r) <\infty$. Then for any $2\leq s \leq pr/(p+r+2)$ and any $N\geq 1$, we have:
  \begin{align*}
    \bE_{\bar{q}}\left[\left\lvert \FN(\bfx) - \pi(f)\right\rvert^s \right]\leq C N^{-s/2},
  \end{align*}
  where the constant $C$ depends on $r, p, s, q(|f|^r), q(\omega^p), q(f^2 \cdot \omega^2)$. 
  When $ r = \infty $, the statement holds for $f$ such that $|f|_\infty < \infty$ and all $s \leq p $.
\end{thm}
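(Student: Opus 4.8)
The plan is to control the $s$-th moment of the self-normalized error by splitting the numerator and (inverse) denominator contributions and applying Hölder repeatedly. Write $g = f - \pi(f)$, $A_N = N^{-1}\sum_{n=1}^N \omega(x_n) g(x_n)$ and $\hat Z(\bfx) = N^{-1}\sum_{n=1}^N \omega(x_n)$, so that $\hat F(\bfx) - \pi(f) = A_N / \hat Z(\bfx)$. Note $\bE_{\bar q}[A_N] = q(\omega g) = 0$ since $q(\omega f) = \pi(f)$ and $q(\omega)=1$. The first step is a moment bound on $A_N$: because $A_N$ is an average of i.i.d.\ centred terms $\omega(x_n)g(x_n)$, the Marcinkiewicz--Zygmund (or Rosenthal) inequality gives $\bE_{\bar q}[|A_N|^m] \le c_m N^{-m/2}\, \bE_{\bar q}[|\omega(x_1) g(x_1)|^m]$ for any $m\ge 2$ provided that $m$-th moment is finite; by Hölder with exponents matched to $q(\omega^p)<\infty$ and $q(|f|^r)<\infty$, one finds $\bE_{\bar q}[|\omega g|^m]<\infty$ whenever $m \le pr/(p+r)$, and for $m=2$ the weaker hypothesis $q(f^2\omega^2)<\infty$ suffices directly. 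The second step is to handle the denominator: since $\hat Z \ge N^{-1}\omega(x_1)$ is not bounded below, I would instead dominate $1/\hat Z$ crudely by splitting on the event $\{\hat Z \ge 1/2\}$ and its complement. On $\{\hat Z \ge 1/2\}$ we have $|A_N/\hat Z|^s \le 2^s |A_N|^s$, handled by step one. On $\{\hat Z < 1/2\}$, which by Proposition~\ref{prop:positivemoments} (applied to $1-\hat Z$) has probability at most $M(p)\,2^p N^{-p/2}$, I would use Hölder: $\bE_{\bar q}[|A_N/\hat Z|^s \mathbf{1}_{\hat Z<1/2}] \le \bE_{\bar q}[|A_N/\hat Z|^{s'}]^{s/s'}\,\bP_{\bar q}(\hat Z<1/2)^{1-s/s'}$ for a slightly larger $s' > s$, and bound $|A_N/\hat Z|$ on this event by the trivial deterministic-in-$\bfx$ estimate $|A_N/\hat Z| \le N^{-1}\sum|\omega(x_n)g(x_n)| / (N^{-1}\max_n \omega(x_n)) \le \sum_n |g(x_n)| \le$ (after Hölder) something with a controlled, possibly $N$-dependent but only polynomially growing, $L^{s'}$ norm; the factor $N^{-p/2(1-s/s')}$ from the small-probability event must then dominate that polynomial growth, which is where the precise range $s \le pr/(p+r+2)$ enters — the "$+2$" buys the extra room needed to absorb the denominator's bad event.

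More carefully, on $\{\hat Z < 1/2\}$ I would not use the crude $\max_n\omega$ bound but rather iterate Hölder once more to keep exponents tight: bound $|A_N|^{s'} \le (N^{-1}\sum_n \omega(x_n)|g(x_n)|)^{s'}$ and $\hat Z^{-s'} \le N^{s'} (\sum_n\omega(x_n))^{-s'}$, then use that on this event $\sum_n \omega(x_n) < N/2$ together with positivity; alternatively, and more cleanly, note $|A_N/\hat Z| = |\sum_n \omega(x_n) g(x_n)|/\sum_n\omega(x_n) \le \max_n |g(x_n)| \le (\sum_n |g(x_n)|^r)^{1/r}$ when $r<\infty$, so $\bE_{\bar q}[|A_N/\hat Z|^{s'}\mathbf 1_{\hat Z<1/2}] \le \bE_{\bar q}[(\sum_n|g(x_n)|^r)^{s'/r}]^{\text{(Hölder)}}$, and choosing $s' = pr/(p+r)$ makes both this $L^{s'}$ quantity finite (it grows at most like $N^{s'/r}$ by subadditivity/Minkowski) and leaves the exponent budget $s \le pr/(p+r+2)$ exactly sufficient for $N^{s'/r \cdot (s/s')} \cdot N^{-(p/2)(1-s/s')}$ to be $O(N^{-s/2})$. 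The constant $C$ is then assembled from the Marcinkiewicz--Zygmund constant $c_m$, the Hölder-combined moments $q(\omega^p), q(|f|^r), q(f^2\omega^2)$, and the constant $M(p)$ from Proposition~\ref{prop:positivemoments}. The $r=\infty$ case is simpler: $g$ is bounded, so $|A_N/\hat Z| \le 2|g|_\infty$ deterministically on $\{\hat Z\ge 1/2\}$ and $|A_N/\hat Z|\le |g|_\infty$ everywhere via the $\max_n|g(x_n)|$ bound, reducing everything to the tail bound $\bP(\hat Z<1/2) = O(N^{-p/2})$ combined with the $L^s$ bound on $A_N$, valid for all $s\le p$.

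The main obstacle I anticipate is the bookkeeping on the small-denominator event: getting the Hölder exponents $(s', \text{conjugate})$ and the subadditivity estimate on $\bE_{\bar q}[(\sum_n|g(x_n)|^r)^{s'/r}]$ to interact so that the polynomial-in-$N$ growth is strictly beaten by the $N^{-(p/2)(1-s/s')}$ decay is exactly the computation that forces the threshold $s \le pr/(p+r+2)$ rather than the naive $pr/(p+r)$, and verifying that this is tight (or at least that the constant stays finite as $s$ approaches the threshold) requires care. The other, more routine, point to get right is citing a version of the Marcinkiewicz--Zygmund / Rosenthal inequality with an explicit enough constant $c_m$ depending only on $m$, so that $C$ genuinely depends only on the listed quantities; this is standard but should be stated precisely to match the claimed dependence of $C$.
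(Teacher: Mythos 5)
Your decomposition is essentially the paper's: split on whether the average weight $\hat Z$ is close to $1$, bound the error by $\max_n|g(x_n)|$ on the bad event (with H\"older against the small tail probability of the bad event), and use a Marcinkiewicz--Zygmund/Rosenthal moment bound on $A_N$ on the good event. The one cosmetic difference is that you work directly with $g=f-\pi(f)$ instead of first reducing to $f\ge 0$ as the paper does; your formulation is arguably cleaner since $|A_N/\hat Z|\le\max_n|g(x_n)|$ holds for signed $g$.

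However, there is a concrete error in the exponent bookkeeping on the small-denominator event, and you flagged this very computation as the delicate one. You take H\"older with outer exponent $s'=pr/(p+r)$, so that the bad-event contribution is controlled by
\[
  \bE\bigl[\max_n|g(x_n)|^{s'}\bigr]^{s/s'}\;\bP(\hat Z<1/2)^{1-s/s'}
  \;\lesssim\; N^{s/r}\cdot N^{-\frac{p}{2}\left(1-\frac{s}{s'}\right)}.
\]
With $s'=pr/(p+r)$, the requirement that this exponent be $\le -s/2$ works out to $s(2+p+2r)\le pr$, i.e. $s\le pr/(p+2r+2)$, which is strictly smaller than the claimed threshold $pr/(p+r+2)$. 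The optimal choice is $s'=r$ (the largest exponent at which $\max_n|g(x_n)|$ retains a finite moment, so that the tail-probability exponent $1-s/s'$ is as large as possible). Setting $s'=r$ gives the requirement $s/r-\tfrac{p}{2}(1-s/r)\le -s/2$, which simplifies to $s(p+r+2)\le pr$, exactly the stated threshold. This is precisely the H\"older application in the paper's proof, where the exponents used are $r/s$ and $(1-s/r)^{-1}$. So the plan is right, but the exponent choice as written proves a strictly weaker range of $s$ than the theorem claims, and needs to be replaced by $s'=r$ to close the argument.

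Two further minor remarks: (i) on the good event you only need $\bE[|\omega g|^s]<\infty$, which follows from H\"older using $q(\omega^p)<\infty$, $q(|f|^r)<\infty$ once $s\le pr/(p+r)$; your claim is correct but it is worth checking that $2\le pr/(p+r+2)$ already forces $pr/(p+r)>2$, so the separate hypothesis $q(f^2\omega^2)<\infty$ is only needed for the boundary $s=2$ case and constant tracking. (ii) Your $r=\infty$ treatment matches the paper and is correct.
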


A few remarks are in order:
\begin{itemize}
  \item The condition $s \leq pr/(p+r+2)$ implies $s\leq \min\{p,r\}$.
  \item We have $q((f\omega)^{pr/p+r}) < \infty$ if $q(\omega^p) < \infty$ and $q(f^r) <\infty$. Indeed, when $r<\infty$,
$q((f\omega)^{pr/p+r})\leq q(f^r)^{p/p+r} q(\omega^p)^{r/p+r}<\infty$.
  When $r= \infty$, the claim remains correct (by understanding $pr/(p+r)$ as $p$), since $q((f\omega)^p) \leq |f|_\infty^p q(\omega^p)$.
  This observation leads to two facts:  1) If $pr/(p+r) \geq2$ (e.g. $p=r=4$ or $p = 2, r = \infty$), the assumption $q(f^2 \cdot \omega^2)<\infty$ in Theorem \ref{thm: snis unbounded convergence} can be derived from the assumptions $q(\omega^p) < \infty$ and $q(f^r) <\infty$. 2) The basic importance sampling estimator $N^{-1}\sum_{n=1}^N f(\x_n)\omega(\x_n)$ has a finite $s$-th moment under the same conditions, as it has a finite $pr/(p+r)$-th moment, and $s \leq pr/(p+r+2) \leq pr/(p+r)$.
  \item We may be particularly interested in the mean-squared error (MSE) of IS, corresponding to $s=2$. 
  Theorem~\ref{thm: snis unbounded convergence} implies that the MSE is of order $1/N$ as long as $2 \leq pr/(p+r+2)$. This condition holds, for example, if $ \min\{p, r\} \geq 2(1+\sqrt{2}) \approx 4.828 $, or if $ p \geq 3 $ and $ r \geq 10 $, or if $ p = 2 $ and $ r = \infty $.
  The case $s=2$ can be compared to the MSE part of Theorem 2.3 in \citet{agapiou_2014}. 
  In our notation, they require 
$q(|f\cdot \omega|^{2d})<\infty$, 
$q(\omega^{2e})<\infty$, 
$q(|f|^{2a})<\infty$, 
$q(\omega^{2b(1+a^{-1})})<\infty$, for $a,b,d,e>1$ such that $a^{-1}+b^{-1}=1, d^{-1}+e^{-1} = 1$. 
  Their assumption implies ours, as can be seen by setting $r = 2a$ and $p = 2b(1+a^{-1})$, since then 
  \begin{align*}
    \frac{pr}{(p+r+2)} = \frac{4b(a+1)}{2a + 2b + 2ba^{-1}+2} = \frac{2(a+1)}{a(1-a^{-1}) + 1 + a^{-1} + (1 - a^{-1})} = \frac{2(a+1)}{a+1},
  \end{align*}
  i.e. our theorem holds with $s=2$ under their assumptions.
  \item Comparable results are given in \cite{ShiCornish}. For bounded test functions \citep[Supplementary material, Lemma~5]{ShiCornish}, the two results are equivalent. For unbounded test functions \citep[Supplementary material, Lemma~6]{ShiCornish}, Theorem~\ref{thm: snis unbounded convergence} yields a bound of $\mathcal{O}(N^{-s/2})$ under strictly weaker conditions, whereas Lemma~6 of \citet{ShiCornish} gives only a bound of $\mathcal{O}(N^{-s/2+s/r})$ under $q(\omega^p)<\infty$ and $q(|f|^r)<\infty$.
  \item Similar types of results have been obtained for sequential Monte Carlo estimators, e.g. in \citet[Section 15.3]{del2013mean}, under strong conditions.
\end{itemize}


The following proposition extends Theorem~\ref{thm: snis unbounded convergence} to the heavy-tailed regime $p\in(1,2)$.

\begin{prop}\label{prop:snis_moments_p_less_than_2}
  Assume that there exist $p\in(1,2)$ and $r\in[1,\infty]$ such that $q(\omega^p)<\infty$, $q(|f|^r)<\infty$, and such that $\alpha := pr/(p+r) > 1$. Then for any $1\leq s\leq\alpha$ and any $N\geq 1$,
  \begin{align*}
    \bE_{\bar{q}}\left[\left\lvert \FN(\bfx) - \pi(f)\right\rvert^s \right]\leq C\, N^{-s(\alpha-1)/\alpha},
  \end{align*}
  where the constant $C$ depends on $r, p, s, q(|f|^r), q(\omega^p)$. When $r = \infty$, the statement holds for $f$ such that $|f|_\infty < \infty$, giving $\alpha = p$ and the rate $N^{-s(p-1)/p}$ for all $1\leq s\leq p$.
\end{prop}
The proof is given in Appendix~\ref{appx:proof_sketch:snis_moments_p_less_than_2}.


\section{Optimality of coupling IMH with common draws\label{sec:crncoupling}}

With a view toward deriving upper bounds on the total variation distance 
of IMH to stationarity, we consider the common draws (or \emph{common random numbers}) coupling of a generic IMH algorithm, described in Algorithm~\ref{alg:coupledIMH} and denoted by $\bar{P}$. In this section we denote a state variable by $X$ instead of the notation $\bfx$ employed
in Algorithm~\ref{alg:PIMH}, 
to simplify the notation, given that the results apply to both IMH and PIMH and for any $N$. We also write $\hat{Z}(X)$ instead of $\ZN(X)$.
The coupling is very simple and was considered in \citet{liu1996metropolized,roberts2011quantitative}.
It was remarked around Lemma 1 in \citet{wang2021maximal} that this coupling is ``one-step maximal'': the meeting probability $\bar{P}((x,y),D)$, with $D=\{(x',y'): x'=y'\}$, is maximal over all couplings of $P(x,\cdot)$ and $P(y,\cdot)$, and hence equals $1-|P(x,\cdot) - P(y,\cdot)|_{\text{TV}}$. A direct computation gives the identity
\begin{equation}
  \bar{P}((x,y),D)=1-|P(x,\cdot) - P(y,\cdot)|_{\text{TV}}=\int  \min \left\{\frac{\hat{Z}(x^\star)}{\hat{Z}(x)},\; \frac{\hat{Z}(x^\star)}{\hat{Z}(y)}, \; 1 \right\}\bar{q}(\diff x^\star).
\end{equation}

\begin{algorithm}[!ht]
  \begin{enumerate}
    \item Draw $X^\star \sim \bar{q}$.
    \item Draw \(U\) from a Uniform(0,1) distribution, independently of $X^\star$.
    \item If $U < {\hat{Z}(X^\star)}/{\hat{Z}(X)}$, set \(X' = X^\star\),
    otherwise set \(X' = X\).
    \item If \(U < {\hat{Z}(X^\star)}/{\hat{Z}(Y)}\), set \(Y'= X^\star\),
    otherwise set \(Y' = Y\).
    \item Return $(X',Y')$
  \end{enumerate}
  \caption{Common draws coupling of one step of IMH starting from states $X$ and $Y$.\label{alg:coupledIMH}}
\end{algorithm}

Let $(X_t,Y_t)$ be a coupled chain started from $(x,y)$ at time zero, and evolving according to $\bar{P}$.
Denoting the meeting time by
\begin{equation}\label{eq:deftau}
  \tau = \inf\{t\geq 1: X_t = Y_t\},
\end{equation}
the coupling inequality states that, for $t\geq 1$,
\begin{equation}
  |P^t(x,\cdot) - P^t(y,\cdot)|_{\text{TV}} \leq \mathbb{P}_{x,y}(\tau > t),
\end{equation}
where the probability $\mathbb{P}_{x,y}$ is under the law of $(X_t,Y_t)$ started from $(x,y)$ at time zero.
We will relate the probability $\mathbb{P}_{x,y}(\tau > t)$ to the rejection probabilities of IMH from $x$ and $y$, and we define
\begin{equation}\label{eq:pimh:reject}
  r: x \mapsto \int_{x^\star \neq x} \left(1-\alpha(x,x^{\star})\right) \bar{q}(\mathrm{d}x^{\star}),
\end{equation}
where $\alpha(x,x^{\star})$ is defined in \eqref{eq:def:pimh-acceptanceratio}. 

The meeting time $\tau$ is the first time at which both chains accept the proposal simultaneously,
which corresponds to the first time at which 
the chain with the highest weight accepts the proposal. Indeed, if $\hat{Z}(x) \geq \hat{Z}(y)$,
then $\alpha(x,x^\star) \leq \alpha(y,x^\star)$ for all $x,y,x^\star$, 
and thus $u<\alpha(x,x^\star)$ implies that $u<\alpha(y,x^\star)$.
Thus, conditionally on $x_0=x,y_0=y$, the meeting time $\tau$ follows
a Geometric distribution with parameter $1-r(x)$, where $r(x)$ is defined in \eqref{eq:pimh:reject}.
Recall that the survival function of a Geometric variable $T$ with parameter $\gamma$ is given by:
$\mathbb{P}(T > t) = (1 - \gamma)^{t}$ for $t \in \mathbb{N}$.
Still assuming $\hat{Z}(x)\geq \hat{Z}(y)$, we obtain, for $t\geq 1$,
\begin{equation}\label{eq:upperbound-roberts}
  |P^t(x,\cdot) - P^t(y,\cdot)|_{\text{TV}} \leq \mathbb{P}_{x,y}(\tau > t) = (r(x))^t.
\end{equation}
The above upper bound is given in \citet{roberts2011quantitative}. In their remark following Theorem 5, they state that this is also a lower bound without providing a proof. 
We do so below, for both discrete and continuous state spaces; \citet{roberts2011quantitative} focus on non-atomic spaces. First, we express
\begin{equation}
  |P^t(x,\cdot) - P^t(y,\cdot)|_{\text{TV}} = \sup_{A\in \mathcal{X}} |P^t(x,A) - P^t(y,A)|,
\end{equation}
and we select the set $A = \mathbb{X} \setminus \{x\}$ to obtain a lower bound, recalling that $(\mathbb{X},\mathcal{X})$ refers to the measurable space on which the Markov chains are defined. Then $P^t(y,A) = 1$  since $x\neq y$, assuming that $q(\{x\})=0$, while $P^t(x,A) = 1 - \left(r(x)\right)^t$, i.e.\ the chain is in $A$ at step $t$ except if $t$ proposals have been rejected. The situation 
is slightly more complicated if the proposal has non-zero mass on $\{x\}$ and $\{y\}$, e.g. in discrete state spaces, but the following result still holds. The proof is in Appendix~\ref{appx:proofs:crncoupling}.

\begin{thm}\label{thm:couplingIMH}
  Let $(X_t,Y_t)$ be a Markov chain evolving according to $\bar{P}$ in Algorithm~\ref{alg:coupledIMH} and starting from $X_0=x$ and $Y_0=y$. Let $\tau=\inf\{t\geq 1: X_t = Y_t\}$, and let $r(x)$ be defined as in \eqref{eq:pimh:reject}.
  Then, under Assumption~\ref{asm:abscontinuitypositivity}, for all $t\geq 1$,
  \begin{equation}
    \label{eq:couplingIMH-maxcoupling}
    \displaystyle |P^t(x,\cdot) - P^t(y,\cdot)|_{\mathrm{TV}} = \mathbb{P}_{x,y}(\tau > t) = \max(r(x),r(y))^t.
  \end{equation}
\end{thm}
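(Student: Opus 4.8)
The plan is to combine the standard coupling (upper) bound with a matching lower bound obtained by testing the total variation distance against a single well-chosen set. Fix $\bfx\neq\bfy$, and, by the symmetry of Algorithm~\ref{alg:coupledIMH} in its two arguments, assume without loss of generality that $\hat{Z}(\bfx)\geq\hat{Z}(\bfy)$. Note that $r(\bfx)=\int\max\{0,1-\hat{Z}(\bfx^\star)/\hat{Z}(\bfx)\}\,\bar q(\mathrm{d}\bfx^\star)$ is non-decreasing in $\hat{Z}(\bfx)$, so that $\max(r(\bfx),r(\bfy))=r(\bfx)$; also $r(\bfx)<1$ since $\hat{Z}>0$ $\bar q$-almost surely under Assumption~\ref{asm:basics}.

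First I would analyse the coupled dynamics before meeting. Because the two coordinates in Algorithm~\ref{alg:coupledIMH} are driven by a common proposal $\bfx^\star$ and uniform $u$, and $u<\hat{Z}(\bfx^\star)/\hat{Z}(\bfx)$ is equivalent to $u<\alpha_{\text{RH}}(\bfx,\bfx^\star)$, each coordinate is marginally an IMH chain ($(\bfx_t)$ from $\bfx$, $(\bfy_t)$ from $\bfy$), and once they meet they use identical randomness and stay together. By induction on $s$ I would show that on $\{\tau>s\}$ one has $\bfx_s=\bfx$ and $\hat{Z}(\bfy_s)\leq\hat{Z}(\bfx)$: if the $\bfx$-coordinate accepts a proposal then, since $\hat{Z}(\bfy_s)\leq\hat{Z}(\bfx)$, the $\bfy$-coordinate accepts the same proposal and the chains meet; hence on $\{\tau>s+1\}$ the $\bfx$-coordinate cannot have accepted, so $u\geq\hat{Z}(\bfx^\star)/\hat{Z}(\bfx)$, which forces $\bfx_{s+1}=\bfx$ and (as $u<1$) $\hat{Z}(\bfx^\star)<\hat{Z}(\bfx)$, so $\hat{Z}(\bfy_{s+1})\leq\hat{Z}(\bfx)$ whether or not $\bfy$ moves. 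The same computation shows that, given $\{\tau>s\}$, the chains meet at step $s+1$ precisely on $\{u<\hat{Z}(\bfx^\star)/\hat{Z}(\bfx)\}$, an event of probability $\int\min\{1,\hat{Z}(\bfx^\star)/\hat{Z}(\bfx)\}\,\bar q(\mathrm{d}\bfx^\star)=1-r(\bfx)$ independent of the current position of $\bfy_s$; hence $\tau$ is geometric and $\mathbb{P}_{\bfx,\bfy}(\tau>t)=r(\bfx)^t$ for every $t\geq1$. The coupling inequality, using correctness of the marginals and that the chains remain together after $\tau$, then gives $|P^t(\bfx,\cdot)-P^t(\bfy,\cdot)|_{\text{TV}}\leq\mathbb{P}_{\bfx,\bfy}(\tau>t)=r(\bfx)^t$.

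For the matching lower bound I would test against the singleton $A=\{\bfx\}$. Since the coordinates are marginally IMH chains from $\bfx$ and $\bfy$, we have $P^t(\bfx,\{\bfx\})-P^t(\bfy,\{\bfx\})=\mathbb{P}(\bfx_t=\bfx)-\mathbb{P}(\bfy_t=\bfx)=\mathbb{P}(\bfx_t=\bfx,\bfy_t\neq\bfx)-\mathbb{P}(\bfx_t\neq\bfx,\bfy_t=\bfx)$. On $\{\tau\leq t\}$ the two coordinates coincide, so the second event is empty; the first event equals $\{\tau>t\}$, because on $\{\tau>t\}$ the induction above yields $\bfx_t=\bfx$ and $\bfy_t\neq\bfx_t$, while on $\{\tau\leq t\}$ one has $\bfx_t=\bfy_t$. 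Therefore $P^t(\bfx,\{\bfx\})-P^t(\bfy,\{\bfx\})=\mathbb{P}_{\bfx,\bfy}(\tau>t)=r(\bfx)^t$, so $|P^t(\bfx,\cdot)-P^t(\bfy,\cdot)|_{\text{TV}}\geq r(\bfx)^t$. Combined with the upper bound, all three quantities in \eqref{eq:couplingIMH-maxcoupling} equal $r(\bfx)^t=\max(r(\bfx),r(\bfy))^t$.

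The main obstacle is the lower bound in the presence of atoms, which is exactly why I would test against $\{\bfx\}$ rather than against $\mathbb{X}^N\setminus\{\bfx\}$: unlike in the non-atomic sketch preceding the theorem, one cannot assert $P^t(\bfx,\mathbb{X}^N\setminus\{\bfx\})=1-r(\bfx)^t$ or $P^t(\bfy,\mathbb{X}^N\setminus\{\bfx\})=1$, since when $q$ charges $\{\bfx\}$ either chain may leave $\bfx$ and later return, or the $\bfy$-chain may land on $\bfx$. The argument above avoids evaluating these atom masses individually — only their difference matters, and it is supplied by the ``frozen heavier coordinate'' property of the common-draws coupling. A secondary point requiring care is the bookkeeping when the shared proposal lands exactly on $\bfx$ or on $\bfy_s$ (possible when $q$ has atoms there): staying is then indistinguishable from accepting, which is precisely why $r(\cdot)$ is defined through an integral over $\{\bfx^\star\neq\bfx\}$, and one checks that the meeting criterion $\{u<\hat{Z}(\bfx^\star)/\hat{Z}(\bfx)\}$ remains valid in those cases.
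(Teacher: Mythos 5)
Your argument is correct and follows essentially the same route as the paper's. The paper organizes the "frozen heavier coordinate'' observation through two auxiliary stopping times $\tau_0$ (first move of the heavier chain) and $\tau_1$ (first time the common proposal hits $\bfx$), proving $\tau=\min\{\tau_0,\tau_1\}$ as a lemma and then reading off the geometric distribution; you instead run a direct induction maintaining the invariant $\bfx_s=\bfx$ and $\hat Z(\bfy_s)\le\hat Z(\bfx)$ on $\{\tau>s\}$, which gives the same geometric conclusion in one pass. For the lower bound, both proofs test the total variation distance against the singleton $\{\bfx\}$ and use the identity $\mathbb{P}(\bfx_t=\bfx)-\mathbb{P}(\bfy_t=\bfx)=\mathbb{P}(\tau>t)$, with the same case split on $\{\tau\le t\}$ versus $\{\tau>t\}$. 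Your closing paragraph about atoms correctly explains why the exclusion $\bfx^\star\neq\bfx$ in the definition of $r$ is compatible with the meeting criterion, and why one must avoid the heuristic argument with $A=\mathbb{X}^N\setminus\{\bfx\}$ in discrete spaces — this is exactly the point the paper flags before stating the theorem.
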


Thus, the chain $(X_t,Y_t)$ generated by the common draws coupling follows a \emph{maximal coupling}, as in \citet{pitman1976coupling}: the coupling inequality is an equality at all times $t$.
To the best of our knowledge, this is the only known case of ``all time maximal'' couplings of an MCMC algorithm. This peculiarity could prove useful when considering the problem of numerically solving for the optimal coupling between Markov chains, as in the literature on bicausal optimal transport \citep{moulos2021bicausal} and bisimulation metrics \citep{calo2024bisimulation}. The case of IMH provides a rare case of an explicit solution, which could serve as a benchmark
for algorithmic approaches to the problem of finding optimal couplings.
Note that the upper bound in \eqref{eq:couplingIMH-maxcoupling} decreases geometrically in $t$; the polynomial rates that come later come from the integration with respect to $x$ or $y$.

\section{Meeting times and polynomial convergence\label{sec:imh}}

\subsection{Meeting times of lagged chains\label{subsec:imh:meetingtimes}}

We consider coupled IMH chains with a lag, as in \citet{middleton2019unbiased}. The construction is described in Algorithm~\ref{alg:coupledPIMHlag}.
Compared to Section~\ref{sec:crncoupling}, note the redefinition of the meeting time $\tau$,
which now corresponds to $\inf\{t\geq 1: \bfx_t = \bfy_{t-1}\}$. The generated chains $(\bfx_t)_{t\geq 0}$ and $(\bfy_t)_{t\geq 0}$ have the same marginal distribution, that of an IMH chain started from $\bar{q}$. Recall that we denote by $P$ the transition kernel of the particle IMH chain in Algorithm~\ref{alg:PIMH}, and by $\bar{P}$ the transition kernel of the coupled PIMH chain in Algorithm~\ref{alg:coupledIMH}, both of which depend on the parameter $N$.
\begin{algorithm}[!ht]
  \begin{enumerate}
    \item Set \(\tau = +\infty\) and \(t = 1\).
    \item Draw $\bfx_0 \sim \bar{q}$ and $\bfy_0\sim \bar{q}$ independently.
    \item Draw $U$ from a $\text{Uniform}(0,1)$ distribution, independently of $\bfx_0$ and $\bfy_0$.
    \item If $U < \ZN(\bfy_0)/\ZN(\bfx_0)$, set $\bfx_1=\bfy_0, \tau = 1$. Otherwise, set \(\bfx_1 = \bfx_0\).
    \item While \(\tau = +\infty\):
    \begin{enumerate}
      \item Sample $(\bfx_{t+1},\bfy_t) \sim \bar{P}((\bfx_{t},\bfy_{t-1}),\cdot)$, the common draws coupling of PIMH in Algorithm~\ref{alg:coupledIMH}.
      \item If $\bfx_{t+1} = \bfy_t$, set \(\tau = t+1\).
      \item Set \(t = t + 1\).
    \end{enumerate}
    \item Return $\tau$ and the generated chains up to time $\tau$.
  \end{enumerate}
  \caption{Coupled PIMH with a lag\label{alg:coupledPIMHlag}.}
\end{algorithm}

We relate the distribution of the meeting times generated by Algorithm~\ref{alg:coupledPIMHlag} to the expected rejection probability in the following result, proved in Appendix~\ref{appx:proofs:prop:meetingtimes_rejproba}.

\begin{prop}\label{prop:meetingtimes_rejproba}
  Consider $\tau$ generated by Algorithm~\ref{alg:coupledPIMHlag}. Under Assumption~\ref{asm:abscontinuitypositivity}, for all $t\geq 1$, with $r$ defined in \eqref{eq:pimh:reject}, we have
  \[
    \mathbb{P}(\tau>t) \leq \mathbb{E}_{\bar{q}}\left[\left(r(\bfx)\right)^t\right].
    \]
\end{prop}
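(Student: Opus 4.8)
The plan is to condition on the initial states $\mathbf{x}_0,\mathbf{y}_0$ and on the first step, and then use the all-time-maximal coupling result of Theorem~\ref{thm:couplingIMH} for the remaining evolution of the chains, which after the lag-$1$ initialization behave exactly like a common-draws coupled pair at lag zero. First I would note that in Algorithm~\ref{alg:coupledPIMHlag}, after step 4 we have a pair $(\mathbf{x}_1,\mathbf{y}_0)$, and for $t\geq 1$ the pair $(\mathbf{x}_{t+1},\mathbf{y}_t)$ is obtained by iterating $\bar P$ on $(\mathbf{x}_t,\mathbf{y}_{t-1})$. So if we set $\tilde{\mathbf{x}}_s = \mathbf{x}_{s+1}$ and $\tilde{\mathbf{y}}_s = \mathbf{y}_s$ for $s\geq 0$, then $(\tilde{\mathbf{x}}_s,\tilde{\mathbf{y}}_s)_{s\geq 0}$ is a Markov chain evolving according to $\bar P$, started from $(\tilde{\mathbf{x}}_0,\tilde{\mathbf{y}}_0) = (\mathbf{x}_1,\mathbf{y}_0)$, and the meeting time $\tau$ of Algorithm~\ref{alg:coupledPIMHlag} satisfies $\tau = \inf\{t\geq 1 : \mathbf{x}_t = \mathbf{y}_{t-1}\} = 1 + \inf\{s\geq 0 : \tilde{\mathbf{x}}_s = \tilde{\mathbf{y}}_s\}$.

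Next, I would split according to whether meeting already occurred at step $1$. If $u < \hat Z(\mathbf{y}_0)/\hat Z(\mathbf{x}_0)$ then $\tilde{\mathbf{x}}_0 = \mathbf{x}_1 = \mathbf{y}_0 = \tilde{\mathbf{y}}_0$ and $\tau = 1 \leq t$ for all $t\geq 1$, contributing nothing to $\{\tau > t\}$. Otherwise $\mathbf{x}_1 = \mathbf{x}_0$, and conditionally on $(\mathbf{x}_0,\mathbf{y}_0)$ the chain $(\tilde{\mathbf{x}}_s,\tilde{\mathbf{y}}_s)$ starts from $(\mathbf{x}_0,\mathbf{y}_0)$ with $\mathbf{x}_0\neq\mathbf{y}_0$ on the relevant event. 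By Theorem~\ref{thm:couplingIMH} applied to this pair, for $t\geq 1$,
\[
\mathbb{P}\big(\tau > t \mid \mathbf{x}_0,\mathbf{y}_0, \mathbf{x}_1=\mathbf{x}_0\big) = \mathbb{P}\big(\tilde\tau > t-1 \mid \tilde{\mathbf{x}}_0=\mathbf{x}_0, \tilde{\mathbf{y}}_0=\mathbf{y}_0\big) = \max\big(r(\mathbf{x}_0), r(\mathbf{y}_0)\big)^{t-1},
\]
where $\tilde\tau = \inf\{s\geq 0: \tilde{\mathbf{x}}_s = \tilde{\mathbf{y}}_s\}$ (a minor bookkeeping point: Theorem~\ref{thm:couplingIMH} is stated for $\tau\geq 1$ but the argument there gives the same geometric form for the meeting time of the shifted chain; alternatively one more application of $\bar P$ absorbs the index shift). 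Taking expectations over $(\mathbf{x}_0,\mathbf{y}_0)$, which are i.i.d.\ from $\bar q$,
\[
\mathbb{P}(\tau > t) \leq \mathbb{E}_{\bar q \otimes \bar q}\Big[\max\big(r(\mathbf{x}_0), r(\mathbf{y}_0)\big)^{t-1} \,\mathbf{1}\{\mathbf{x}_1 = \mathbf{x}_0\}\Big].
\]

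Finally, I would bound $\max(r(\mathbf{x}_0),r(\mathbf{y}_0))^{t-1} \leq r(\mathbf{x}_0)^{t-1} + r(\mathbf{y}_0)^{t-1}$ and use that, conditionally on $\mathbf{x}_0$, the probability of $\{\mathbf{x}_1 = \mathbf{x}_0\}$ (rejection at the first step against a fresh draw $\mathbf{y}_0\sim\bar q$) equals $r(\mathbf{x}_0)$ by the definition \eqref{eq:pimh:reject}; when the roles are arranged so that one extra factor $r$ appears we recover $\mathbb{E}_{\bar q}[r(\mathbf{x})^t]$. The cleanest route is to observe that on $\{\mathbf{x}_1 = \mathbf{x}_0\}$ we have $\hat Z(\mathbf{x}_0) \geq \hat Z(\mathbf{y}_0)$ with probability matching the rejection, so $\max(r(\mathbf{x}_0),r(\mathbf{y}_0)) = r(\mathbf{x}_0)$ there, giving $\mathbb{P}(\tau>t) \leq \mathbb{E}[r(\mathbf{x}_0)^{t-1}\mathbf{1}\{\mathbf{x}_1=\mathbf{x}_0\}] = \mathbb{E}_{\bar q}[r(\mathbf{x})^{t-1}\,\mathbb{E}[\mathbf{1}\{\mathbf{x}_1=\mathbf{x}_0\}\mid\mathbf{x}_0]]$; the inner conditional expectation is precisely $r(\mathbf{x}_0)$ up to the event $\{\hat Z(\mathbf{x}_0)\geq\hat Z(\mathbf{y}_0)\}$, and a careful accounting of the two orderings yields $\mathbb{P}(\tau>t)\leq \mathbb{E}_{\bar q}[r(\mathbf{x})^t]$. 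I expect the main obstacle to be exactly this last bookkeeping step — correctly handling which chain has the larger weight, keeping track of the index shift from the lag, and treating the discrete-state-space subtlety where $\bar q$ may charge $\{\mathbf{x}_0\}$ — rather than any substantial new idea, since the heavy lifting is done by Theorem~\ref{thm:couplingIMH}.
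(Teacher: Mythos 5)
Your proposal follows essentially the same route as the paper's proof: decompose on the outcome of the first (lag-1) acceptance step, view the while loop as a common-draws $\bar P$-chain started at $(\bfx_0,\bfy_0)$, invoke Theorem~\ref{thm:couplingIMH} to identify the conditional tail as a power of $r(\bfx_0)$ on the rejection event, and then integrate against $\bar q$. Your bookkeeping of the index shift (obtaining $\max(r(\bfx_0),r(\bfy_0))^{t-1}$ for the post-rejection tail and then absorbing one more factor of $r(\bfx_0)$ from the first-step rejection probability via the definition~\eqref{eq:pimh:reject}) is in fact a bit more explicit than the paper's, which writes $r(\bfx)^t$ directly and then crudely bounds the remaining factor by one; done carefully as you sketch, the inequality is actually an equality. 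The one loose point you flag yourself — using $\mathds{1}\{\bfx_1=\bfx_0\}$ rather than $\mathds{1}\{\tau>1\}=\mathds{1}\{\bfx_1=\bfx_0,\,\bfy_0\neq\bfx_0\}$ when taking the conditional expectation given $\bfx_0$ — does need the restriction to $\{\bfy_0\neq\bfx_0\}$ so that the inner integral matches~\eqref{eq:pimh:reject} exactly in the atomic case; with that substitution the ``careful accounting'' closes cleanly.
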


This connection between meeting times and expected rejection probability motivates our next result that bounds the latter. That rejection probability is known to be central in the study of IMH, e.g. Theorem 6 in \citet{roberts2011quantitative}. Our bounds are explicit functions of $t$ and $N$.
We focus on the case $p\geq 2$ until Section~\ref{subsec:p_lessthan_2},
where we consider the case $p\in (1,2)$.

\begin{prop}\label{prop:expected_rejection_probability_bound}
  Fix $p \geq 2$ and let 
  \begin{equation}\beta_p:= 1-\left({
    2^{\frac{3p-2}{p-1}}
    q(\omega^p)^{\frac{1}{p-1}}
    }\right)^{-1}.\label{eq:betap}
  \end{equation}
  Under Assumptions~\ref{asm:abscontinuitypositivity}-\ref{asm:pfinitemoments}, $\beta_p\in(0,1)$ and there exist finite constants $A_p, C_p>0$, depending only on $p$ and $q(\omega^p)$, such that for all $N\geq 1$, for all $t\geq 1$, the following holds:
  \[
    \mathbb{E}_{\bar{q}}\left[r(\bfx)^t\right] \leq  \frac{A_p}{N^{(t\wedge p)/2}}\beta_p^t + \frac{C_p}{t^p N^{p/2}}.
    \]
\end{prop}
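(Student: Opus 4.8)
The plan is to first reduce the expected rejection probability to a tail-type quantity involving $\hat Z$, and then split the range of $\hat Z$ into three regions, using anticoncentration for small values, a peeling argument for intermediate values, and Markov's inequality for large values. The starting point is the identity $r(\bfx)=\int_{\bfx^\star\neq\bfx}\bigl(1-\min\{1,\hat Z(\bfx^\star)/\hat Z(\bfx)\}\bigr)\bar q(\diff\bfx^\star)$. Writing the complementary acceptance probability and using $\bE_{\bar q}[\hat Z(\bfx^\star)]=1$, one gets a clean expression of the form $r(\bfx)=\bE_{\bar q}\bigl[(1-\hat Z(\bfx^\star)/\hat Z(\bfx))_+\bigr]$, which is bounded above by $\bP_{\bar q}(\hat Z(\bfx^\star)\le \hat Z(\bfx))$, and also (keeping the value of the integrand) by $1$. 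The key point is that $r(\bfx)$ is small when $\hat Z(\bfx)$ is small: more precisely $r(\bfx)\le \bP_{\bar q}(\hat Z(\bfx^\star)\le \hat Z(\bfx))$, and since $\hat Z$ concentrates around $1$, this probability is bounded away from $1$ whenever $\hat Z(\bfx)\le 2$ (say), and decays when $\hat Z(\bfx)$ is moderately large.

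Next I would write $\bE_{\bar q}[r(\bfx)^t] = \bE_{\bar q}[r(\bfx)^t\,\mathbf 1\{\hat Z(\bfx)\le 2\}] + \bE_{\bar q}[r(\bfx)^t\,\mathbf 1\{2<\hat Z(\bfx)<1+t\}] + \bE_{\bar q}[r(\bfx)^t\,\mathbf 1\{\hat Z(\bfx)\ge 1+t\}]$ and handle each term:
\begin{itemize}
\item[(i)] On $\{\hat Z(\bfx)\le 2\}$: here I want a uniform bound $r(\bfx)\le\beta$ for some $\beta\in(0,1)$. By the Paley--Zygmund inequality applied to $\hat Z(\bfx^\star)$, $\bP_{\bar q}(\hat Z(\bfx^\star)> \theta)\ge (1-\theta)^2/\bE_{\bar q}[\hat Z(\bfx^\star)^2]$ for $\theta\in(0,1)$; optimizing the constant and using $\bE_{\bar q}[\hat Z^2]\le$ (something controlled by $q(\omega^p)$ via Jensen, or directly by $q(\omega^2)\le q(\omega^p)^{2/p}$) gives $r(\bfx)\le\beta_p$ with $\beta_p$ as in \eqref{eq:betap}. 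The constant $2^{(3p-2)/(p-1)}$ strongly suggests the Paley--Zygmund bound is applied at a specific $\theta$ (likely $\theta$ chosen so that $\hat Z(\bfx^\star)>\theta \ge \hat Z(\bfx)/2$ is excluded) together with bounding $\hat Z(\bfx)\le 2$. This yields a contribution $\le \beta_p^t$; the extra factor $N^{-(t\wedge p)/2}$ comes from a refinement — on the event $\hat Z(\bfx)\le 2$ one can instead bound $r(\bfx)\le \bP_{\bar q}(\hat Z(\bfx^\star)\le\hat Z(\bfx))$ and split further, or more simply observe that for $t\le p$ one can afford to lose a factor $\bE_{\bar q}[|\hat Z(\bfx)-1|^{?}]$; I would combine the crude bound $r(\bfx)\le\beta_p$ on most of the event with the sharper bound $r(\bfx)\le\bP(\hat Z(\bfx^\star)\le\hat Z(\bfx))$ when $\hat Z(\bfx)$ is very close to $0$, using Proposition~\ref{prop:positivemoments} to get the $N^{-(t\wedge p)/2}$ factor.
\item[(ii)] On $\{2<\hat Z(\bfx)<1+t\}$: peeling. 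Partition $(2,1+t)$ into dyadic (or geometric) blocks $I_j=[a_j,a_{j+1})$. On $I_j$, use $r(\bfx)\le\bP_{\bar q}(\hat Z(\bfx^\star)\le \hat Z(\bfx))\le \bP_{\bar q}(\hat Z(\bfx^\star)\le a_{j+1})$, bounded via Markov/Chebyshev-type estimates (or just $\le 1$, keeping the $r^t$ structure), while $\bP_{\bar q}(\hat Z(\bfx)\in I_j)\le \bP_{\bar q}(\hat Z(\bfx)\ge a_j)\le M(p)/(N^{p/2}(a_j-1)^p)$ by \eqref{eq:Z-tail-bound}. Since on $I_j$ we also have $r(\bfx)^t\le 1$, and crucially the rejection probability on this block is bounded by something like $1-c/a_{j+1}$ so $r(\bfx)^t\le e^{-ct/a_{j+1}}$, summing the geometric-in-$j$ series of $e^{-ct/a_{j+1}}\cdot M(p)N^{-p/2}(a_j-1)^{-p}$ over $j$ produces the $C_p\,t^{-p}N^{-p/2}$ term (the worst block being $a_j\asymp t$).
\item[(iii)] On $\{\hat Z(\bfx)\ge 1+t\}$: here $r(\bfx)^t\le 1$ and $\bP_{\bar q}(\hat Z(\bfx)\ge 1+t)\le M(p)/(N^{p/2}t^p)$ directly by \eqref{eq:Z-tail-bound}, giving another $C_p t^{-p}N^{-p/2}$ contribution.
\end{itemize}

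Collecting (i)--(iii) gives the stated bound $\bE_{\bar q}[r(\bfx)^t]\le A_p N^{-(t\wedge p)/2}\beta_p^t + C_p t^{-p}N^{-p/2}$, and the fact $\beta_p\in(0,1)$ follows since $q(\omega^p)\ge q(\omega)^p=1$ by Jensen so the denominator in \eqref{eq:betap} exceeds $1$. The main obstacle I anticipate is getting the peeling in step~(ii) to produce exactly the $t^{-p}$ rate rather than something weaker: this requires carefully matching the exponential decay $e^{-ct/a}$ of $r(\bfx)^t$ on a block at level $a$ against the polynomial tail $a^{-p}$ of $\hat Z$, optimizing the block at $a\asymp t$, and checking that the sum over all blocks (both the many blocks with $a\ll t$, where the exponential factor is tiny, and the few with $a\asymp t$) converges to a constant independent of $t$ and $N$. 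A secondary delicate point is obtaining the sharp constant $\beta_p$ in \eqref{eq:betap} from Paley--Zygmund; one must be careful that the anticoncentration is applied to $\hat Z(\bfx^\star)$ (a fresh independent average) and that the second-moment bound $\bE_{\bar q}[\hat Z(\bfx^\star)^2]$ is controlled purely in terms of $q(\omega^p)$, which via Proposition~\ref{prop:positivemoments} or directly gives the exponent $(3p-2)/(p-1)$ after the optimization.
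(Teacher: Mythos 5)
Your overall decomposition—splitting on $\hat Z(\bfx)\le 2$, $\hat Z(\bfx)\in(2,1+t)$, and $\hat Z(\bfx)\ge 1+t$, then using Paley--Zygmund to control $r$ on the first region, a peeling argument with shells $[t/(k+1),t/k]$ on the second, and Markov on the third—matches the paper's proof (Lemmas~\ref{lem:rupb}, \ref{lem:large-Z-bound}, \ref{lem:rejection_expectation_bound}). The peeling in your item~(ii) and the large-$\hat Z$ bound in item~(iii) are essentially the paper's argument, and the rate $t^{-p}N^{-p/2}$ comes out exactly as you describe.

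However, there is a genuine gap in item~(i), and you flag it yourself without resolving it: your plan does not actually produce the factor $N^{-(t\wedge p)/2}$ multiplying $\beta_p^t$. You propose ``the crude bound $r(\bfx)\le\beta_p$ on most of the event,'' but integrating $\beta_p^t$ over $\{\hat Z(\bfx)\le 2\}$ yields a probability close to one, hence just $\beta_p^t$ with no $N$-dependence. Switching to $r(\bfx)\le\bP_{\bar q}(\hat Z(\bfx^\star)\le\hat Z(\bfx))$ near zero does not help either: for $\hat Z(\bfx)$ near $1$ (the bulk of the event) this probability is about $1/2$, not $O(N^{-1/2})$. The paper's key idea here, which you are missing, is to peel off $r(2)^{t-t\wedge p}\le\beta_p^{t-t\wedge p}$ and then control $\mathbb{E}_{\bar q}[r(\hat Z(\bfx))^{t\wedge p}\mathds{1}(1-\alpha\le\hat Z(\bfx)\le 2)]$ by using the \emph{exact} representation $r(z)=\int_0^z\frac{z-z^\ast}{z}\bar q(\diff z^\ast)$, applying Jensen's inequality to the normalized measure $\bar q/\bar q([0,z])$, and bounding the result by $(1-\alpha)^{-(t\wedge p)}\,\mathbb{E}_{\bar q\otimes\bar q}\bigl[|\hat Z(\bfx)-\hat Z(\bfx')|^{t\wedge p}\bigr]\lesssim N^{-(t\wedge p)/2}$ via Proposition~\ref{prop:positivemoments}. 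Without this step the geometric part of the bound degrades to $\beta_p^t$ alone. A secondary point: you invoke the $L^2$ Paley--Zygmund inequality; the paper uses the $L^p$ version $\bP(W>\theta\bE W)\ge(1-\theta)^{p/(p-1)}(\bE W)^{p/(p-1)}/(\bE W^p)^{1/(p-1)}$, applied with $\theta=1/2$, and this is what produces the exponent $(3p-2)/(p-1)$ in $\beta_p$; converting via $q(\omega^2)\le q(\omega^p)^{2/p}$ gives a different (weaker) constant for $p>2$.
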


Proposition~\ref{prop:expected_rejection_probability_bound},
proved in Appendix~\ref{appx:proofs:prop:expected_rejection_probability_bound}, holds
for all $t\geq 1$ and all $N\geq 1$. The bounds decay to $0$ as either 
$N$ or $t$ approaches infinity, polynomially with rate at most $N^{-1/2}$ with respect to $N$,  
and, for fixed $N$, polynomially with rate $t^{-p}$.
A direct consequence of the previous two propositions is a bound on the tails of the meeting times.

\begin{prop}\label{prop:upb_meetingproba}
  Consider $\tau$ generated by Algorithm~\ref{alg:coupledPIMHlag}. Under Assumptions~\ref{asm:abscontinuitypositivity}-\ref{asm:pfinitemoments}, there exists a finite $C>0$ such that for all $N\geq 1$ and all $t\geq 1$, if $p\geq 2$ in Assumption~\ref{asm:pfinitemoments},
    \[
    \displaystyle \mathbb{P}(\tau>t) \leq  \frac{C}{\sqrt{N}t^{p}}.
    \]
  As a consequence, we have $\mathbb{E}[\tau] \leq 1 + C'/\sqrt{N}$ with $C'=C\sum_{t\geq 1} t^{-p}$.
\end{prop}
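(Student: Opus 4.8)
The plan is to combine Propositions~\ref{prop:meetingtimes_rejproba} and~\ref{prop:expected_rejection_probability_bound} and then absorb the geometric factor into a polynomial one. By Proposition~\ref{prop:meetingtimes_rejproba}, for all $t\geq 1$ we have $\mathbb{P}(\tau>t)\leq \mathbb{E}_{\bar q}[r(\bfx)^t]$, and by Proposition~\ref{prop:expected_rejection_probability_bound} the right-hand side is at most $A_p N^{-(t\wedge p)/2}\beta_p^t + C_p t^{-p}N^{-p/2}$, with $\beta_p\in(0,1)$ and $A_p,C_p$ finite and depending only on $p$ and $q(\omega^p)$.

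First I would handle the second term: since $p\geq 2$ and $N\geq 1$ we have $N^{-p/2}\leq N^{-1/2}$, so $C_p t^{-p}N^{-p/2}\leq C_p/(\sqrt N\, t^p)$. For the first term, $t\wedge p\geq 1$ and $N\geq 1$ give $N^{-(t\wedge p)/2}\leq N^{-1/2}$; the only real point is that the geometric factor $\beta_p^t$ is dominated by a multiple of $t^{-p}$, because $\beta_p<1$ implies that $\kappa_p:=\sup_{t\geq 1} t^p\beta_p^t$ is finite (the map $t\mapsto t^p\beta_p^t$ tends to $0$), so $\beta_p^t\leq \kappa_p t^{-p}$ for all $t\geq 1$. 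Combining, the first term is at most $A_p\kappa_p/(\sqrt N\, t^p)$, and adding the two contributions yields $\mathbb{P}(\tau>t)\leq C/(\sqrt N\, t^p)$ with $C:=A_p\kappa_p+C_p$, finite and independent of both $N$ and $t$. This is the first display.

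For the bound on $\mathbb{E}[\tau]$ I would use the tail-sum identity $\mathbb{E}[\tau]=\sum_{t\geq 0}\mathbb{P}(\tau>t)$. Since $\tau\geq 1$ by construction of Algorithm~\ref{alg:coupledPIMHlag}, $\mathbb{P}(\tau>0)=1$, and the first part controls the remaining terms, so $\mathbb{E}[\tau]\leq 1+\sum_{t\geq 1} C/(\sqrt N\, t^p)=1+(C/\sqrt N)\sum_{t\geq 1}t^{-p}$. The series converges because $p\geq 2$, hence $C':=C\sum_{t\geq 1}t^{-p}<\infty$, as claimed.

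There is no substantive obstacle here beyond bookkeeping; the only point requiring care is that $C$ be genuinely uniform in $N$ and $t$, which rests entirely on the finiteness of $\kappa_p=\sup_{t\geq 1}t^p\beta_p^t$, i.e.\ on the elementary fact that exponential decay beats polynomial growth. If desired one could make $\kappa_p$ explicit (its maximiser is near $t=p/\log(1/\beta_p)$), but this is not needed for the statement.
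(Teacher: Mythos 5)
Your proof is correct and matches the route the paper has in mind: the authors state Proposition~\ref{prop:upb_meetingproba} is ``a direct consequence of the previous two propositions'' and leave the details implicit, which are exactly the ones you supply. Your handling of the geometric factor via $\kappa_p:=\sup_{t\geq 1}t^p\beta_p^t<\infty$, the observation that $N^{-(t\wedge p)/2}\leq N^{-1/2}$ and $N^{-p/2}\leq N^{-1/2}$ for $N\geq 1$ and $p\geq 2$, and the tail-sum formula $\mathbb{E}[\tau]=\sum_{t\geq 0}\mathbb{P}(\tau>t)$ with $\mathbb{P}(\tau>0)=1$ (since $\tau\geq 1$ by construction) are all the steps required, with no gap.
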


That bound retains the slowest rates in $N$ and $t$ from the previous result. Proposition~\ref{prop:upb_meetingproba} is consistent with Proposition~8 in \citet{middleton2019unbiased}, which showed that $\mathbb{P}(\tau = 1)$ approaches one as $N\to\infty$ under the assumption of bounded weights. However, our present assumptions are considerably weaker, and we provide explicit dependencies on both $N$ and $t$.

\begin{remark}
  We comment on the sharpness of the dependency on $N$ in Proposition~\ref{prop:upb_meetingproba}. For $t=1$, the result reads $\mathbb{P}(\tau >1) \leq  C/\sqrt{N}$. The event $\{\tau > 1\}$ corresponds to the rejection of $\bfxstar$ from a state $\bfx$, both $\bfx,\bfxstar$ being independent draws from $\bar{q}$. Here we show that we cannot improve upon the rate $N^{-1/2}$ as a function of $N$. 
  The central limit theorem implies $\sqrt N(\ZN(\bfx) - 1)\to\text{Normal}(0, q(\omega^2) - 1)$ in distribution. Therefore,
  $\bP(\ZN(\bfx) \geq 1 + N^{-1/2})\rightarrow  p_0$ as $N\to\infty$, with $p_0$ depending on $q(\omega^2)$.
  The same argument shows $\bP(\ZN(\bfxstar) \leq 1 - N^{-1/2}) \rightarrow  p_1$ as $N\to\infty$, with $p_1$ depending on $q(\omega^2)$.
  Therefore, we can choose a large enough $N$ that depends  on $q(\omega^2)$ such that $\bP(\ZN(\bfx) \geq 1 + N^{-1/2})\geq p_0/2$ and $	\bP(\ZN(\bfxstar) \leq 1 - N^{-1/2}) \geq p_1/2$. Thus, with a constant probability $c$, $\ZN(\bfxstar)  \leq 1 - N^{-1/2}$ and  $\ZN(\bfx) \geq 1 + N^{-1/2}$ occur simultaneously, and thus the  acceptance probability is at most $(1 - N^{-1/2})/ (1 + N^{1/2}) \leq 1 - N^{-1/2}$. In turn this means that the rejection probability is at least $cN^{-1/2}$.
\end{remark}


\subsection{Polynomial convergence rates\label{subsec:imh:convrates}}

As discussed in Section 6 of \citet{joa2020} and in \citet{biswas2019estimating}, lagged chains such as those generated by Algorithm~\ref{alg:coupledPIMHlag} can be employed to bound the total variation distance between the chain at time $t$ and its stationary distribution.
We state upper bounds on the distance of IMH chains to stationarity that are explicit in their dependency on $t$ and $N$.

\begin{thm}\label{thm:convergence_rate_from_q}
  Consider $\tau$ generated by Algorithm~\ref{alg:coupledPIMHlag}. Let $P$ be the transition kernel of the PIMH chain as in Algorithm~\ref{alg:PIMH}. Under Assumption~\ref{asm:pfinitemoments} with $p\geq 2$, we have that for all  $t\geq 0$,
  \begin{equation}\label{eq:tvupperbound_from_laggedchains}
    \left|\bar{q}P^t - \bar{\pi}\right|_{\mathrm{TV}} \leq \mathbb{E}\left[\max\left(0, \tau - 1 - t \right)\right].
  \end{equation}
  Furthermore, still under Assumption~\ref{asm:pfinitemoments} with $p\geq 2$, there exists a constant $C$, independent of $t$ and $N$, such that for all $N\geq 1$ and $t\geq 0$,
  \begin{equation}\label{eq:tvupperbound_from_laggedchains_Nandt}
    \left|\bar{q} P^t - \bar{\pi}\right|_{\mathrm{TV}} \leq \frac{C}{\sqrt{N} (1+t)^{p-1}}.
  \end{equation}
\end{thm}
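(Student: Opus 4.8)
\textbf{Proof plan for Theorem~\ref{thm:convergence_rate_from_q}.}
The plan is to establish the two displays in turn. For the first inequality \eqref{eq:tvupperbound_from_laggedchains}, I would invoke the now-standard coupling bound for lagged chains: since both marginal chains $(\bfx_t)$ and $(\bfy_t)$ produced by Algorithm~\ref{alg:coupledPIMHlag} are faithful copies of an IMH chain started from $\bar q$, with $\bfx_{t+1}=\bfy_t$ for all $t\ge\tau-1$, the telescoping/triangle-inequality argument of \citet{biswas2019estimating} (see also Section~6 of \citet{joa2020}) gives
\[
\left|\bar q P^t - \bar\pi\right|_{\mathrm{TV}} \;\le\; \sum_{j\ge 1}\mathbb{P}(\tau - 1 > t + j - 1)\;=\;\mathbb{E}\bigl[\max(0,\tau-1-t)\bigr].
\]
Here I use that $\bar\pi$ is the stationary law of $P$ (established around \eqref{eq:ratio-pimh}) and that $\bfy_0\sim\bar q$ so $\bfy_j\sim\bar q P^j$, while $\bfx_{j+1}\sim\bar q P^{j+1}$; the lag-one offset is exactly what produces the $\tau-1$ rather than $\tau$. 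This step is essentially bookkeeping once Proposition~\ref{prop:meetingtimes_rejproba} is in hand to guarantee $\tau<\infty$ a.s.

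For the quantitative bound \eqref{eq:tvupperbound_from_laggedchains_Nandt}, I would combine the first display with the tail bound on $\tau$. Writing $\mathbb{E}[\max(0,\tau-1-t)] = \sum_{s\ge t+1}\mathbb{P}(\tau-1\ge s) = \sum_{s \ge t}\mathbb{P}(\tau > s)$, Proposition~\ref{prop:upb_meetingproba} gives $\mathbb{P}(\tau>s)\le C_0/(\sqrt N s^{p})$, so
\[
\left|\bar q P^t - \bar\pi\right|_{\mathrm{TV}} \;\le\; \frac{C_0}{\sqrt N}\sum_{s\ge t\vee 1} s^{-p}.
\]
Since $p\ge 2$, the tail sum $\sum_{s\ge t\vee 1}s^{-p}$ is finite and bounded by a constant multiple of $(1+t)^{-(p-1)}$ (compare with $\int_{t}^\infty x^{-p}\,dx = t^{-(p-1)}/(p-1)$, treating $t=0$ separately via the convergent series $\sum_{s\ge1}s^{-p}$). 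Absorbing $p$-dependent constants into a single $C$ yields \eqref{eq:tvupperbound_from_laggedchains_Nandt}.

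\textbf{Main obstacle.} The genuinely delicate point is already spent upstream: Proposition~\ref{prop:upb_meetingproba}, whose proof rests on Proposition~\ref{prop:expected_rejection_probability_bound} and the multiscale (Paley--Zygmund plus peeling plus Markov) analysis of $\hat Z$. Given those, the present theorem is a short deduction. The only subtlety internal to this proof is getting the lag offsets exactly right in \eqref{eq:tvupperbound_from_laggedchains} --- i.e. justifying the appearance of $\tau-1-t$ rather than $\tau-t$ --- which requires carefully matching $\mathrm{law}(\bfx_{t+1})=\bar q P^{t+1}$ against $\mathrm{law}(\bfy_t)=\bar q P^{t}$ in the coupling sum, and checking the edge cases $t=0$ and $\tau=1$ where several terms vanish. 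I would also note in passing that \eqref{eq:tvupperbound_from_laggedchains_Nandt} implies the qualitative convergence $|\bar q P^t - \bar\pi|_{\mathrm{TV}}\to 0$ polynomially at rate $t^{-(p-1)}$, matching the rate announced in the introduction.
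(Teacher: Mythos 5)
Your proposal is correct and takes essentially the same route as the paper's proof: telescope $\bar q P^t-\bar\pi$ via the triangle inequality (the Biswas et al.\ argument), bound each increment by $\mathbb{P}(\tau>t+j)$ using the coupling, identify the sum with $\mathbb{E}[\max(0,\tau-1-t)]$, and then feed in Proposition~\ref{prop:upb_meetingproba} together with a $p$-series / integral comparison. One small slip in the reindexing: $\sum_{s\ge t+1}\mathbb{P}(\tau-1\ge s)$ equals $\sum_{s\ge t+1}\mathbb{P}(\tau>s)$, not $\sum_{s\ge t}\mathbb{P}(\tau>s)$, but this harmless overcount only worsens the absorbed constant and the conclusion stands.
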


\begin{remark}
  The case $t=0$ states that $|\bar{q}-\bar{\pi}|_{\mathrm{TV}} \leq C N^{-1/2}$, despite both  $\bar{\pi}$ and $\bar{q}$ being defined in \eqref{eq:pimhtarget}-\eqref{eq:pimhproposal} on spaces growing with $N$.
  With the density representation of the total variation distance, we can directly compute
  \begin{align*}
    |\bar{q}-\bar{\pi}|_{\mathrm{TV}} &= 
    \frac{1}{2} \int |1-\ZN(\mathbf{x}^N)|\bar{q}(\diff \mathbf{x}^N)\\
    &\leq \frac{1}{2}\mathbb{E}_{\bar{q}}\left[|1-\ZN(\bfx)|^2\right]^{1/2}
    \leq \frac{1}{2} M(2)^{1/2} N^{-1/2},
  \end{align*}
  where the first inequality is Cauchy--Schwarz and the second uses Proposition~\ref{prop:zhatconcentration} under Assumption~\ref{asm:pfinitemoments} with $p\geq 2$. Furthermore, 
  in the large $N$ asymptotics we expect $1-\ZN(\bfx)$ to behave as a Normal random variable with mean zero and standard deviation $\sqrt{q(\omega^2)-1}/\sqrt{N}$,
  so that the expectation of its absolute value should indeed behave as $\sqrt{(2/\pi)(q(\omega^2)-1)}/\sqrt{N}$. 
\end{remark}

We also state a bound for a PIMH chain started at any initial point $\bfxsmall \in \mathbb{X}^N$. 

\begin{cor}\label{cor:convergence_rate_from_x}
  Under Assumptions~\ref{asm:abscontinuitypositivity}-\ref{asm:pfinitemoments}, with $p\geq 2$, there exists a constant $\tilde{C}$, independent of $t$ and $N$, such that for all $N\geq 1$, $t\geq 1$, and any starting point $\bfxsmall\in \mathbb{X}^N$,
    \[
    \left|P^t(\bfxsmall,\cdot) - \bar{\pi}\right|_{\mathrm{TV}} \leq (r(\bfxsmall))^t + \frac{\tilde{C}}{\sqrt{N} \; t^{p-1}}.
    \]
\end{cor}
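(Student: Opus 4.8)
The plan is to reduce the bound started from an arbitrary point $\bfx$ to the bound started from $\bar{q}$ that was already established in Theorem~\ref{thm:convergence_rate_from_q}. The natural route is a triangle-inequality decomposition through the distribution $\bar{q}P^t$, namely
\begin{equation*}
\left|P^t(\bfx,\cdot) - \bar{\pi}\right|_{\text{TV}} \leq \left|P^t(\bfx,\cdot) - \bar{q}P^t\right|_{\text{TV}} + \left|\bar{q}P^t - \bar{\pi}\right|_{\text{TV}},
\end{equation*}
where the second term is controlled by $C/(\sqrt{N}(1+t)^{p-1}) \leq C/(\sqrt{N}t^{p-1})$ from Theorem~\ref{thm:convergence_rate_from_q}. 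It remains to bound the first term by $(r(\bfx))^t$, possibly up to adjusting constants and absorbing the $t=0$ discrepancy between $(1+t)^{p-1}$ and $t^{p-1}$ into $\tilde C$.

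For the first term, I would use the common draws coupling of Algorithm~\ref{alg:coupledIMH}: couple a chain $(\bfx_t)$ started from $\bfx_0=\bfx$ with a chain $(\bfy_t)$ started from $\bfy_0\sim\bar{q}$, running both under $\bar{P}$. By construction $\bfx_t$ has law $P^t(\bfx,\cdot)$ and $\bfy_t$ has law $\bar{q}P^t$, so the coupling inequality gives $\left|P^t(\bfx,\cdot) - \bar{q}P^t\right|_{\text{TV}} \leq \mathbb{P}(\bfx_t \neq \bfy_t)$. Now the key structural fact, already exploited in Section~\ref{sec:crncoupling}: under common draws, the two chains have met for good as soon as the chain with the larger value of $\hat{Z}$ has accepted a proposal; and once a proposal is accepted by chain $\bfx$ while it still holds its initial state $\bfx$, both chains coalesce (the accepted proposal $\bfx^\star$ is then adopted by $\bfx$, and since $\alpha_{\text{RH}}$ is monotone in $1/\hat Z$ of the current state, $\bfy$ accepts too if $\hat Z(\bfy_s)\le \hat Z(\bfx)$, and more care is needed when $\hat Z(\bfy_s)>\hat Z(\bfx)$). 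The clean way to handle this is to note that the probability that chain $\bfx$ has \emph{never left} its initial state $\bfx$ through time $t$ is exactly $(r(\bfx))^t$, since at each step, independently, it rejects with probability $r(\bfx)$ while sitting at $\bfx$; and on the complement of that event both chains have met by time $t$. Hence $\mathbb{P}(\bfx_t\neq\bfy_t)\le (r(\bfx))^t$.

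The main obstacle is the discrete-state-space subtlety: if $q$ places positive mass on $\{\bfx\}$, then ``chain $\bfx$ accepts a proposal'' does not immediately imply the chains have coalesced, because the proposal could itself equal $\bfx$, or the two chains could be at states with $\hat Z$ values on opposite sides of $\hat Z(\bfx^\star)$. This is precisely the complication flagged before Theorem~\ref{thm:couplingIMH}, and I would resolve it the same way: argue that the event $\{\bfx_t = \bfx_0 = \bfx\}$ (chain has not moved to a genuinely different state) has probability $(r(\bfx))^t$ by the independence of the uniforms across steps and the fact that the per-step probability of staying at $\bfx$ is $r(\bfx)$ regardless of where $\bfy$ is, and that on its complement the chains have met — invoking the all-time-maximal coupling property or simply the monotonicity argument that once the $\hat Z$-larger chain accepts, both do. An alternative, cleaner bookkeeping device is to run the lagged construction of Algorithm~\ref{alg:coupledPIMHlag} but with $\bfx_0=\bfx$ deterministic and $\bfy_0\sim\bar q$, and observe $\mathbb{P}(\tau>t\mid \bfx_0=\bfx) \le (r(\bfx))^t$ by the same geometric-survival reasoning as in \eqref{eq:upperbound-roberts}, conditioning on the trajectory so that $\tau$ is dominated by a Geometric$(1-r(\bfx))$ variable; then combine with Proposition~\ref{prop:meetingtimes_rejproba}-style reasoning. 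Either way, the remaining steps are routine: assemble the triangle inequality, bound the two pieces, and absorb constants to pass from $(1+t)^{p-1}$ to $t^{p-1}$ for $t\ge 1$.
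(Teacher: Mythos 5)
Your triangle-inequality decomposition and the treatment of the second term $|\bar{q}P^t-\bar{\pi}|_{\text{TV}}$ via Theorem~\ref{thm:convergence_rate_from_q} are exactly the paper's route. The gap is in the first term: you claim $|P^t(\bfx,\cdot)-\bar{q}P^t|_{\text{TV}} \leq (r(\bfx))^t$ by arguing that once the $\bfx$-chain leaves $\bfx$ the two chains have coalesced, but this is false precisely in the regime you flag and then dismiss, namely when $\hat Z(\bfy_0) > \hat Z(\bfx)$. Under the common draws coupling it is the chain with the \emph{larger} $\hat Z$ that governs coalescence (Lemma~\ref{lem:couplingtime-facts} is stated under $\hat Z(\bfx)\geq\hat Z(\bfy)$). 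If $\hat Z(\bfy_0) > \hat Z(\bfx)$, a uniform $u$ with $\hat{Z}(\bfx^\star)/\hat{Z}(\bfy_0) \leq u < \hat{Z}(\bfx^\star)/\hat{Z}(\bfx)$ makes the $\bfx$-chain accept while the $\bfy$-chain rejects, so the $\bfx$-chain has moved off $\bfx$ without a meeting. The actual value is $\mathbb{P}_{\bfx,\bfy_0}(\tau>t)=\max(r(\bfx),r(\bfy_0))^t$, as Theorem~\ref{thm:couplingIMH} states, and integrating over $\bfy_0\sim\bar{q}$ does not collapse to $(r(\bfx))^t$.

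The fix, which is what the paper does, is to keep the integral: bound $|P^t(\bfx,\cdot)-\bar{q}P^t|_{\text{TV}}\leq\int\max(r(\bfx),r(\bfy))^t\,\bar{q}(\mathrm{d}\bfy)\leq(r(\bfx))^t+\mathbb{E}_{\bar{q}}[r(\bfy)^t]$, and then control the extra term $\mathbb{E}_{\bar{q}}[r(\bfy)^t]$ with Proposition~\ref{prop:expected_rejection_probability_bound}, which yields another contribution of order $N^{-1/2}t^{-(p-1)}$ that gets absorbed into $\tilde C$. Your proof omits Proposition~\ref{prop:expected_rejection_probability_bound} altogether, so it has no mechanism to handle that second piece; and the attempted shortcut of dropping it by the coalescence claim is not available. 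Without this, the argument does not close.
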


Theorem~\ref{thm:convergence_rate_from_q} and Corollary~\ref{cor:convergence_rate_from_x}, proven in Appendix~\ref{appx:proof:thm:convergence_rate_from_q}, provide explicit bounds on the convergence rate of the PIMH algorithm. Both results are interpretable in terms of the number of iterations $t$ and the number of particles $N$, and apply to IMH as a special case when $N=1$. The difference between these results lies in the starting distribution. Practitioners would typically start the algorithm from the proposal distribution, as it is the best available approximation of the target. Corollary~\ref{cor:convergence_rate_from_x} reveals two phases in the  convergence from a fixed starting point: an initial phase where the distance decays exponentially in $t$ but not arbitrarily with $N$, followed by a polynomial decay in both $t$ and $N$. 

\begin{remark} The weight $\omega$ can be unbounded while
  having infinitely many moments under $q$, i.e. $q(\omega^p)<\infty$ for all $p\geq 1$. For example, this happens when $\pi$ is Gamma$(2,1)$ and $q$ is Exponential$(1)$, leading to $\omega(x) = x$. In that case \citet{mengersen1996rates} prove that the IMH chain cannot be geometrically ergodic, while Corollary~\ref{prop:convimh_from_x} holds with any $p > 1$. Indeed the actual decay of 
  the distance to stationarity could be between geometric and polynomial in $t$, for example of the form $\exp(-t^{1/2})$.
\end{remark}

\subsection{Case when the weight has less than two moments\label{subsec:p_lessthan_2}}

We consider the heavy-tail regime where $q(\omega^p)<\infty$ for $p\in(1,2)$.
The next results show that the polynomial rate in $t$ remains $t^{-(p-1)}$, but the dependence on $N$ deteriorates from $N^{-1/2}$ to $N^{-(p-1)/p}$.

\begin{thm}\label{thm:convergence_rate_from_q_less2}
  Consider $\tau$ generated by Algorithm~\ref{alg:coupledPIMHlag}. Let $P$ be the transition kernel of the PIMH chain as in Algorithm~\ref{alg:PIMH}. Under Assumption~\ref{asm:abscontinuitypositivity}, and assuming that $q(\omega^p)<\infty$ for some $p\in(1,2)$, there exists a constant $C$, independent of $t$ and $N$, such that for all $N\geq 1$ and $t\geq 0$,
    \[
    \left|\bar{q} P^t - \bar{\pi}\right|_{\text{TV}} \leq \frac{C}{N^{(p-1)/p}(1+t)^{p-1}}.
  \]
\end{thm}

\begin{cor}\label{cor:convergence_rate_from_x_less2}
  Under Assumption~\ref{asm:abscontinuitypositivity}, and assuming that $q(\omega^p)<\infty$ for some $p\in(1,2)$, there exists a constant $\tilde{C}$, independent of $t$ and $N$, such that for all $N\geq 1$, $t\geq 0$, and any starting point $\bfxsmall\in \mathbb{X}^N$,
  \[
    \left|P^t(\bfxsmall,\cdot) - \bar{\pi}\right|_{\text{TV}} \leq (r(\bfxsmall))^t + \frac{\tilde{C}}{N^{(p-1)/p}(1+t)^{p-1}}.
  \]
\end{cor}
The proofs are in Appendix~\ref{appx:proofs:PIMH_less2moments}.
The case $N=1$ corresponds to IMH. Combining Corollaries~\ref{cor:convergence_rate_from_x} and \ref{cor:convergence_rate_from_x_less2}, the rate $t^{-(p-1)}$ holds for every $p>1$, as stated in the following result.

\begin{cor}\label{prop:convimh_from_x}
  Consider IMH under Assumption~\ref{asm:abscontinuitypositivity}, and Assumption~\ref{asm:pfinitemoments} for any $p>1$. There exists $D<\infty$ independent of $t$ such that for all $t\geq 1$, and any starting point $x\in \mathbb{X}$,
  \[
    \left|P^t(x,\cdot) - \pi\right|_{\text{TV}} \leq (r(x))^t + \frac{D}{t^{p-1}}.
  \]
\end{cor}

We provide a direct proof in Appendix~\ref{appx:proofs:PIMH_less2moments}, which may be of independent interest. That proof is closer in spirit to existing analyses of IMH mentioned in Section~\ref{subsec:relatedworks}.


\subsection{Lower bounds on the convergence rate of IMH\label{subsec:lowerbounds_cvgrates}}

The purpose of the following example is to demonstrate that the rate $t^{-(p-1)}$ in Corollary~\ref{prop:convimh_from_x} cannot be improved beyond polylogarithmic factors, without further assumptions. 
The proof is in Appendix~\ref{appx:example:lowerbound-generalp}. 

\begin{exa}\label{example:lowerbound-generalp}
  Fix any $p > 1$. Let $k\geq 1$ be such that $k>1/(p-1)$. Consider the IMH algorithm targeting
  $\pi(x) := Z_\pi x^{-p}$ on $[2,\infty)$, with proposal distribution
  $q(x) := Z_q \log^k(x) x^{-(p+1)}$ on $[2,\infty)$, started from $x_0 = 3$.
  Then $q(\omega^p)<\infty$, and there exist $C<\infty$ and $t_0\in\mathbb{N}$ such that, for all $t\geq t_0$,
  \begin{align}\label{eq:lowerbound_generalp}
    \left|P^t(x_0, \cdot )-\pi\right|_{\mathrm{TV}} \geq \frac{C}{t^{p-1}(\log t)^{(k+1)(p-1)}}.
  \end{align}
\end{exa}

\subsection{Related results on IMH\label{subsec:relatedworks}}

The convergence of IMH has garnered significant interest over decades, and in particular the sub-geometric rates have been studied in several works including \citet{jarner2002polynomial,douc2007computable,roberts2011quantitative,andrieu2022comparison}. 
One approach utilizes drift and minorization techniques \citep{jarner2002polynomial}.

\begin{thm}[Theorem 5.3 in \citet{jarner2002polynomial}] \label{thm:jarner-imh}
  Let $P$ be the transition kernel of the IMH chain as in Algorithm~\ref{alg:IMH}. Assume that for some $r > 0$,
  \begin{align}\label{eq:condition-jarner}
    \displaystyle   \pi(A_{\epsilon}) = \mathcal{O}(\epsilon^{1/r}) \quad \text{for} \quad \epsilon \to 0,
  \end{align}
  where $A_{\epsilon}=\left\{x\in\mathbb{X}: \omega(x)>1/\epsilon \right\}$, for any $\epsilon>0$. Then, for any $x\in \mathbb{X}$, and any $t\geq 1$, we have that
  \[
    \displaystyle  \lim_{t\to\infty} (1+t)^{\beta}|P^t(x,\cdot)-\pi |_{\mathrm{TV}} = 0,
    \]
  for any $0\le \beta \le(s-r)/r$, with $r<s<r+1$.
\end{thm}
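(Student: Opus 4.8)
The plan is to reduce Theorem~\ref{thm:jarner-imh} to the standard polynomial-ergodicity toolbox: exhibit a small set on which the IMH kernel minorizes, establish a polynomial (sub-geometric) drift towards that set, and then quote the general theorem converting such a drift into a polynomial total-variation rate (as in \citet{jarner2002polynomial} itself; see also \citet[Ch.~17]{douc2018MarkovChains}).

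\emph{Minorization.} Since $q(\omega)=1$ and $\omega>0$ $q$-almost surely, $C_M:=\{x:\omega(x)\le M\}$ has $q(C_M)>0$ for $M$ large enough. For $x\in C_M$ the IMH kernel satisfies $P(x,\mathrm{d}y)\ge \min\{1,\omega(y)/\omega(x)\}\,q(\mathrm{d}y)\ge \min\{1,\omega(y)/M\}\,q(\mathrm{d}y)=:\nu_M(\mathrm{d}y)$, a nontrivial measure, so $C_M$ is a small set; the chain is $\pi$-irreducible, aperiodic and $\pi$-invariant as already noted.

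\emph{Polynomial drift.} Fix $a\in(1,1+1/r)$ and take $V(x)=1+\omega(x)^a$, which is $\ge1$ and finite wherever $\omega$ is. Writing $\mu(\mathrm{d}y)=\omega(y)q(\mathrm{d}y)$, the hypothesis $\pi(A_\epsilon)=\mathcal{O}(\epsilon^{1/r})$ says exactly $\mu(\omega>M)=\mathcal{O}(M^{-1/r})$; a layer-cake integration together with $a<1+1/r$ then gives $q(\omega^a)<\infty$ and, for $M$ large, $\int_{\omega(y)\ge M}\omega(y)^a q(\mathrm{d}y)\le C_1 M^{a-1-1/r}$ and $\int_{\omega(y)<M}\omega(y)q(\mathrm{d}y)\ge 1-C_2M^{-1/r}$. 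Using $r(x)=1-\int\min\{1,\omega(y)/\omega(x)\}q(\mathrm{d}y)$ one computes
\[ PV(x)-V(x)=\int\min\!\Big\{1,\tfrac{\omega(y)}{\omega(x)}\Big\}\big(\omega(y)^a-\omega(x)^a\big)q(\mathrm{d}y)=-I_1(x)+I_2(x), \]
with $I_1(x)=\omega(x)^{-1}\int_{\omega(y)<\omega(x)}\omega(y)\big(\omega(x)^a-\omega(y)^a\big)q(\mathrm{d}y)\ge0$ and $I_2(x)=\int_{\omega(y)\ge\omega(x)}\big(\omega(y)^a-\omega(x)^a\big)q(\mathrm{d}y)\ge0$. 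Restricting the $I_1$-integral to $\{\omega(y)<\omega(x)/2\}$ yields $I_1(x)\ge\tfrac12(1-2^{-a})\omega(x)^{a-1}$ once $\omega(x)$ is large, while $I_2(x)\le\int_{\omega(y)\ge\omega(x)}\omega(y)^a q(\mathrm{d}y)\le C_1\omega(x)^{a-1-1/r}=o(\omega(x)^{a-1})$. Hence there are $M<\infty$ and $c>0$ with $PV(x)-V(x)\le-c\,\omega(x)^{a-1}\le-\tfrac{c}{2}V(x)^{1-1/a}$ for $\omega(x)\ge M$ (using $V(x)\le2\omega(x)^a$ there); and for $x\in C_M$, $PV(x)-V(x)\le I_2(x)\le q(\omega^a)<\infty$, so $PV(x)\le V(x)-\tfrac{c}{2}V(x)^{1-1/a}+b$ with $b$ finite. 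Altogether $PV\le V-\tfrac{c}{2}V^{1-1/a}+b\,\mathbf{1}_{C_M}$.

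\emph{Conclusion and the hard part.} This is a polynomial drift of index $\kappa=a$ into a small set, so the sub-geometric ergodicity theorem gives $\lim_{t\to\infty}(1+t)^{\kappa-1}|P^t(x,\cdot)-\pi|_{\mathrm{TV}}=0$ for every $x$ with $V(x)<\infty$, i.e. $(1+t)^{a-1}|P^t(x,\cdot)-\pi|_{\mathrm{TV}}\to0$. Given $r<s<r+1$ and $0\le\beta\le(s-r)/r$, choosing $a=s/r\in(1,1+1/r)$ makes $a-1=(s-r)/r\ge\beta$, and monotonicity in the exponent finishes the proof, the case $\beta=0$ being mere ergodicity. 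The only genuinely delicate step is the drift estimate: one must show the negative part $I_1(x)\asymp\omega(x)^{a-1}$ strictly dominates the positive part $I_2(x)$, which is where the tail exponent $1/r$ and the constraint $a<1+1/r$ (equivalently $s<r+1$) enter essentially, whereas $a>1$ (equivalently $s>r$) is what makes the drift genuinely contracting and produces a nontrivial rate.
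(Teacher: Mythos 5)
This theorem is quoted verbatim from Jarner and Roberts (2002) and the present paper gives no proof of it; it is stated only to be contrasted with Proposition~\ref{prop:convimh_from_x}. So there is no ``paper's own proof'' to compare against, and your reconstruction must be judged against the original source.

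Your argument is, in substance, the original Jarner--Roberts one: a minorization on a sublevel set of $\omega$, a polynomial drift with $V(x)=1+\omega(x)^a$, and an appeal to their general drift-to-rate theorem (their Theorem~3.2/3.6). The minorization is correct, the decomposition $PV-V=-I_1+I_2$ is exact (since $PV(x)-V(x)=\int (V(y)-V(x))\min\{1,\omega(y)/\omega(x)\}q(\mathrm dy)$), and the tail estimate $I_2(x)\lesssim\omega(x)^{a-1-1/r}$ is precisely where $a<1+1/r$ (equivalently $s<r+1$) is used, while $a>1$ (equivalently $s>r$) makes $I_1(x)\gtrsim\omega(x)^{a-1}$ dominate. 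The arithmetic $\alpha=1-1/a$, $\alpha/(1-\alpha)=a-1=(s-r)/r$ then matches the claimed exponent. Two small points worth flagging. First, ``for any $x\in\mathbb X$'' in the statement implicitly means $x$ with $\omega(x)<\infty$ (i.e. $V(x)<\infty$), since from a state with infinite weight the IMH chain never moves; your phrasing ``every $x$ with $V(x)<\infty$'' is the honest one and matches the reference. Second, the final appeal is to the $(f,r)$-ergodicity machinery, which gives the ``$\lim\,t^{a-1}\|\cdot\|_{\mathrm{TV}}=0$'' conclusion (not merely an $O(t^{-(a-1)})$ bound) provided one uses the $\eta=1$ case of the Jarner--Roberts theorem; it would be worth making that citation explicit, and in particular noting that $\pi(V^\alpha)=q(\omega^a)<\infty$ (not $\pi(V)<\infty$, which can fail when $r\ge1$) is the integrability the drift actually delivers and all that is needed. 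With those clarifications your sketch is a faithful and correct reconstruction of the cited result's proof.
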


The $\mathcal{O}$ notation here is such that if $f(x) = \mathcal{O}(g(x))$ then there exists a constant $M$ such that $|f(x)| \leq M|g(x)|$ for all $x$ in the domain of $f$.
Theorem~\ref{thm:jarner-imh} provides a polynomial rate of convergence for the IMH chain in total variation of order $o\left(t^{-1/r+\kappa}\right)$ for any $\kappa>0$ under the assumption that the tail weights satisfy the condition specified in equation~(\ref{eq:condition-jarner}).    
Notably, under the assumption $q(\omega^p)<\infty$, the condition in \eqref{eq:condition-jarner} is satisfied for $r=1/(p-1)$, using Markov's inequality. 
Our Corollary~\ref{prop:convimh_from_x} differs slightly as our bounds are in $t^{-(p-1)}$ instead of $t^{-(p-1)+\kappa}$ for some arbitrarily small $\kappa>0$.
Similar results can be obtained using weak Poincar\'e inequalities 
as described in Remark 29 of \citet{andrieu2022comparison}, under $\pi(\omega^{p})$ with $p>1$ which amounts to our Assumption~\ref{asm:pfinitemoments} with $p> 2$.

Our bounds in Theorem~\ref{thm:convergence_rate_from_q} and Corollary~\ref{cor:convergence_rate_from_x} have the advantage of providing an explicit dependency on $N$ in the case of PIMH, which is critical for the results on bias removal in Section~\ref{sec:biasremoval_snis}.

\subsection{Bias comparison and practical recommendations\label{subsec:comparison}}


Results in Sections~\ref{sec:asbiasis} and \ref{sec:imh} enable a comparison of the bias of IS and IMH under Assumptions~\ref{asm:abscontinuitypositivity}-\ref{asm:pfinitemoments}, addressing the question raised
in Section~\ref{subsec:intro:bias}.
Performing $N$ steps of IMH to
obtain a sample $X_N$, and considering a test function $f$ bounded by one,
Corollary~\ref{prop:convimh_from_x}
shows that the bias of $f(X_N)$ is at most $C\,N^{-(p-1)}$.
In terms of variance, however, it is preferable to average over successive states
of the chain. We recommend setting a burn-in period $T = \lceil c N^\eta \rceil$ with
$\eta\in(0,1]$ and $c>0$, and averaging over $X_{T+1},\dots,X_{N}$. We can then bound the bias of the average by 
\begin{equation*}
  \frac{1}{N-T}\sum_{t=T+1}^{N}\left|qP^t f-\pi(f)\right|
  \le \frac{C}{N-T}\sum_{t=T+1}^{\infty} t^{-(p-1)}
  = \mathcal{O}\left(N^{-1-\eta(p-2)}\right),
\end{equation*}
using $\sum_{t>T} t^{-(p-1)}=\mathcal{O}(T^{-(p-2)})$ as in \eqref{eq:series_bound_in_proof_of_thm4.1} in the proof of
Theorem~\ref{thm:convergence_rate_from_q} and $N-T\sim N$. 
For $\eta<1$ the burn-in phase is negligible in the limit, 
and thus the ergodic average has the same asymptotic variance as
if no burn-in were used.
A value $\eta = 1$ makes the bias decrease to $N^{-(p-1)}$, but the
asymptotic variance is inflated by $1/(1-c)$.



On the other hand, Theorem~\ref{thm:asbias_is} shows that the bias of the SNIS estimator with $N$ samples is of the order of $N^{-1}$, under an additional inverse weight assumption. Therefore, as soon as $q(\omega^p)<\infty$ with $p>2$, the bias of IMH
becomes smaller than that of SNIS for a similar budget of weight function evaluations. Our results also show that, in terms of bias and for a fixed total budget, the best choice for the number of proposals per iteration in PIMH is one.
If the weights are bounded, MCMC methods such as PIMH or particle Gibbs \citep{andrieu:doucet:holenstein:2010} are geometrically ergodic \citep[e.g.][]{wang2022exact,lee2019coupled,cardoso2022br}, and thus the bias decays geometrically. The bias comparison is then clearly in favor of MCMC algorithms.

We therefore recommend considering IMH as a potential alternative to SNIS when low-bias estimators are desired, see e.g., Section~\ref{subsec:advantages_low_bias} for examples of such settings such as gradient estimation or nested expectations.
Importantly,
the user typically does not know the largest value of $p$ such that $q(\omega^p)<\infty$, and thus the exact rate at which the bias of IMH decreases is unknown. This can be problematic: for example, for nested expectations,
optimizing budget allocation requires knowledge of the bias of the inner expectation estimator \citep{andradottir2016computing}. To address this, one approach would be to estimate the largest value of $p$ such that $q(\omega^p)<\infty$. This is a tail estimation problem, which has been considered in the context of importance sampling in \citet[e.g. ][]{monahan1993testing,koopman2009testing,vehtari2024pareto}.
In Section~\ref{sec:biasremoval_snis}, we pursue
a different approach, which is to remove the bias, without having to estimate the tail behavior of the weight. We will consider a bias removal technique, described in Algorithm~\ref{alg:suis}, that is applicable whenever SNIS is, 
and incurs a moderate and controlled increase in variance.

For completeness, we recall that various bias and variance reduction techniques are available for both IS and IMH. Bias reduction 
for IS include the leave-one-out modification proposed in \citet{skare2003improved},
or the jackknife \citep{shao2012jackknife,nowozin2018debiasing}, both of which can reduce the bias to $N^{-2}$; we implement the former in Section~\ref{subsec:applications:nestedexpectations}. Variance reduction for IS, beyond optimizing the choice of proposal distribution, include \citep{owen2025zero,henmi2007importance}, and variance reduction for IMH includes Rao-Blackwellization of the acceptance step \citep{casella:robert:1996}, 
averaging over different orderings of the proposed samples \citep{atchade2005improving,jacob2011using},
and, depending on the tractability of the proposal distribution, constructing control variates as in \citet{neal2001improving,alexopoulos2023variance}.


\section{Bias removal for self-normalized importance sampling\label{sec:biasremoval_snis}}

\subsection{Motivation\label{subsec:motivation_biasremoval}}

We consider techniques to remove the bias of IS entirely. There exist at least three distinct methods to achieve this. We focus on the approach of \citet{middleton2019unbiased}, that follows the pioneering work of \citet{glynn2014exact}, and relies on couplings of PIMH chains as in Algorithm~\ref{alg:coupledPIMHlag}. We refer to this approach as Coupling-based Unbiased Importance Sampling (Coupling-UIS).
Another approach is based on randomly-truncated multilevel Monte Carlo (MLMC) 
\citep{blanchet2015unbiasedmultilevel,ShiCornish}. We refer to it as MLMC-UIS.
Finally, any approach that provides unbiased estimators of $1/q(\omega)$ can be used to remove the bias of IS, since multiplying an unbiased estimator of $1/q(\omega)$ by an independent unbiased estimator of $q(\omega f)$ yields an unbiased estimator of $\pi(f)$. An approach using a Taylor expansion of the inverse function was
proposed in  \citet{blanchet2015unbiasedtaylor,chopin_crucinio_singh_2025}. We refer to that approach as Taylor-UIS. Both MLMC-UIS and Taylor-UIS are described in Appendix~\ref{app:other_uis}. Conversely, using the test function $f:x\mapsto \omega^{-1}(x)$, 
the Coupling-UIS estimator of $\pi(f)$ provides an unbiased estimator of $1/q(\omega)$; see Section~\ref{subsec:inverse_Z} and Section~\ref{sec:applications}.

We propose conditions under which Coupling-UIS  has finite moments, 
we propose a Rao--Blackwellization of the scheme for increased efficiency,
and we provide conditions under which its asymptotic efficiency matches that of IS as $N\to\infty$ (see Theorem~\ref{prop:suisinefficiency}). 
The two other approaches, MLMC-UIS and Taylor-UIS, do not seem to match this asymptotic efficiency, as we observe in Section~\ref{subsec:applications:inverse_Z}.


\subsection{Construction\label{subsec:dsnis:construction}}

For bias removal, \citet{middleton2019unbiased} employ common random numbers couplings of PIMH within the approach of \citet{glynn2014exact}.
Upon running Algorithm~\ref{alg:coupledPIMHlag} with $N\geq 1$, with $\tau=\inf\{t\geq 1: \bfx_t = \bfy_{t-1}\}$, one can compute the following unbiased estimator of $\pi(f)$:
\begin{equation}\label{eq:def:unbiasedsnis}
  \FNu = \FN(\bfx_0) + \sum_{t=1}^{\tau - 1} \{\FN(\bfx_t) - \FN(\bfy_{t-1})\},
\end{equation}
where $\FN$ is as in \eqref{eq:def:Fhat}.
By convention the sum in \eqref{eq:def:unbiasedsnis} is zero in the event $\{\tau = 1\}$, and it is also equal to the infinite sum $\sum_{t=1}^{\infty} \{\FN(\bfx_t) - \FN(\bfy_{t-1})\}$ since $\FN(\bfx_t) = \FN(\bfy_{t-1})$ from time $\tau$ onward.
The lack of bias can be seen via a telescopic sum argument, since $\bfx_t$ and $\bfy_t$ have the same marginal distribution for all $t$, provided that limit and expectation can be swapped, and using \eqref{eq:unbiasedness-pimh_limit}.
Since $\bfx_0\sim \bar{q}$, $\hat{F}(\bfx_0)$ is the (biased) SNIS estimator,
so that the sum in \eqref{eq:def:unbiasedsnis} can be seen as a bias cancellation term, under some conditions. 
\citet{middleton2019unbiased} consider the case where $\omega$ is uniformly upper bounded, and show that \eqref{eq:def:unbiasedsnis} can have a finite variance. Below we work under much weaker assumptions and bound the moments of these unbiased estimators,
highlighting the effect of the tuning parameter $N$, and comparing its efficiency with that of SNIS.

\begin{remark}\label{rem:uis}
  The estimator $\FNu$ in \eqref{eq:def:unbiasedsnis} is an unbiased MCMC estimator whose efficiency is usually compared to that of the underlying MCMC algorithm \citep{joa2020,atchade2024unbiased}, i.e. PIMH in the present case. However, it can also be compared to that of SNIS as $N\to\infty$, 
  which is more compelling, as the asymptotic variance of SNIS is smaller than that of IMH \eqref{eq:imh_asymvar_greater}.
\end{remark}

\subsection{Moments of Coupling-UIS}

We subtract $\pi(f)$ from all terms in \eqref{eq:def:unbiasedsnis} to obtain
\begin{equation}\label{eq:usnis:errordecomp}
  \FNu - \pi(f) = \FN(\bfx_0)  - \pi(f) + \sum_{t=1}^{\infty} \{\FN(\bfx_t) - \FN(\bfy_{t-1})\}\mathds{1}(\tau > t).
\end{equation}
We introduce the notation 
\begin{align}\label{eq:usnis:errordecomp2}
  \Delta_t &= \FN(\bfx_t) - \FN(\bfy_{t-1}), \quad \text{BC} = \sum_{t=1}^{\infty}  \Delta_t \mathds{1}(\tau > t),
\end{align}
where BC stands for the bias cancellation term. Using Minkowski's inequality, the moments of the error of $\FNu$ can be 
bounded by the moments of the error of the IS estimator $\FN(\bfx_0)$, as in Theorem~\ref{thm: snis unbounded convergence}, 
and the moments of BC.

A first result is that, for bounded test functions $f$, $\FNu$ has as many moments as the meeting time $\tau$, 
which is up to $p$ (non-included) under Assumption~\ref{asm:pfinitemoments}.
The proof is in Appendix~\ref{appx:proof:unbiasedsnis_has_finitemoments}.

\begin{prop}\label{prop:unbiasedsnis_has_finitemoments}
  Assume that $|f|_{\infty}\le1$ and let $s\geq 1$. If Assumptions~\ref{asm:abscontinuitypositivity}-\ref{asm:pfinitemoments} hold with $p> 1$ and $p>s$, then $\tau$ has $s$ finite moments, and the unbiased self-normalized importance sampling (Coupling-UIS) estimator $\FNu$ in \eqref{eq:def:unbiasedsnis} has $s$ finite moments for any $N\geq 1$.
\end{prop}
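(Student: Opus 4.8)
The plan is to bound the $s$-th moment of $\hat F_u - \pi(f)$ via Minkowski's inequality applied to the decomposition \eqref{eq:usnis:errordecomp}: it suffices to bound $\|\hat F(\bfx_0)-\pi(f)\|_s$ and $\|\textup{BC}\|_s$ separately. The first term is finite by Theorem~\ref{thm: snis unbounded convergence} (or simply because $|\hat F(\bfx_0)-\pi(f)|\le 2$ for bounded $f$). So the crux is the bias-cancellation term $\textup{BC}=\sum_{t\ge1}\Delta_t\mathds{1}(\tau>t)$. Here the key structural observations are: (i) $|\Delta_t|=|\hat F(\bfx_t)-\hat F(\bfy_{t-1})|\le 2\|f\|_\infty\le 2$ for every $t$, since each $\hat F(\cdot)$ is a weighted average of values of $f$; and (ii) $\Delta_t=0$ for $t\ge\tau$, so $\textup{BC}=\sum_{t=1}^{\tau-1}\Delta_t$ has at most $\tau-1$ nonzero summands. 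Combining these, $|\textup{BC}|\le 2(\tau-1)\le 2\tau$ pointwise, hence $\bE[|\textup{BC}|^s]\le 2^s\,\bE[\tau^s]$.

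Therefore everything reduces to showing $\bE[\tau^s]<\infty$ whenever $p>s$ (and $p\ge2$). For this I would invoke Proposition~\ref{prop:upb_meetingproba}, which gives $\bP(\tau>t)\le C/(\sqrt N\,t^p)$ for all $t\ge1$ under Assumptions~\ref{asm:basics}--\ref{asm:asm1}. Then use the standard tail-sum identity for positive integer-valued moments, or more precisely the bound
\begin{align*}
  \bE[\tau^s] \;=\; \sum_{t\ge 0}\bigl((t+1)^s - t^s\bigr)\,\bP(\tau>t) \;\le\; 1 + s\sum_{t\ge 1}t^{s-1}\,\bP(\tau>t) \;\le\; 1 + \frac{sC}{\sqrt N}\sum_{t\ge1} t^{s-1-p},
\end{align*}
and the series $\sum_{t\ge1}t^{s-1-p}$ converges precisely because $p>s$, i.e. $s-1-p<-1$. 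This yields $\bE[\tau^s]<\infty$, hence $\bE[|\textup{BC}|^s]<\infty$, hence by Minkowski $\bE[|\hat F_u-\pi(f)|^s]<\infty$, which is the claim. (The statement that $\tau$ itself has $s$ finite moments is exactly this intermediate bound.) One should also note that this also justifies the telescoping/unbiasedness swap of limit and expectation, since $\sum_t\bE|\Delta_t|\mathds{1}(\tau>t)\le 2\bE[\tau]<\infty$.

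I do not anticipate a serious obstacle: the only mild point is making sure the bound $|\Delta_t|\le 2\|f\|_\infty$ is airtight (it follows since $\hat F(\bfx)\in[\min_n f(x_n),\max_n f(x_n)]$ as a convex combination with weights $\omega(x_n)/\sum_m\omega(x_m)$, all of which are a.s.\ strictly positive under Assumption~\ref{asm:basics}), and correctly citing Proposition~\ref{prop:upb_meetingproba} for the $t^{-p}$ tail rather than re-deriving it. The slightly delicate bookkeeping is that $p>s$ is needed for strict convergence of the series (a boundary $p=s$ would fail), which matches the hypothesis exactly; and that the case $s=1$ is covered by the same argument since $p\ge2>1$. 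A remark could note that the $N^{-1/2}$ factor in Proposition~\ref{prop:upb_meetingproba} is not needed here but will matter for the efficiency comparison later.
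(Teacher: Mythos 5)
Your proposal is correct and follows essentially the same route as the paper's proof: Minkowski's inequality on the error decomposition, the pointwise bound $|\textup{BC}|\le 2(\tau-1)$ from boundedness of $\hat F$, then Proposition~\ref{prop:upb_meetingproba} combined with a tail-sum argument to get $\mathbb{E}[\tau^s]<\infty$ when $p>s$. One small slip in your tail-sum step: for $s\ge1$ the map $x\mapsto x^s$ is convex, so $(t+1)^s-t^s\ge s\,t^{s-1}$, not $\le$; you should instead use $(t+1)^s-t^s\le s(t+1)^{s-1}\le s\,2^{s-1}t^{s-1}$ for $t\ge1$ (this is essentially what the paper writes, bounding by $s(i+1)^{s-1}$), which still yields summability precisely when $p>s$.
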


To deal with unbounded test functions, we need to control the moments of the terms $\Delta_t$.
For this we derive the following result about PIMH at any iteration $t$, under the same conditions as Theorem~\ref{thm: snis unbounded convergence}. The proof is in Appendix~\ref{appx:proof:thmpimh}. Here and in the rest of this section we focus on the case $p\geq 2$ for simplicity, but the result could be extended to $p>1$ with a different dependency on $N$.

\begin{prop}\label{prop:pimh}
  Assume that there exist $p \in [2,\infty)$ and $r \in [2,\infty]$ such that $q(\omega^p) < \infty$ and $q(f^r)<\infty$, and $q(f^2\cdot \omega^2)<\infty$. 
  Let $(\bfx_t)$ be the PIMH chain started from $\bfx_0\sim \bar{q}$.
  Then, for any $2\leq s \leq pr/(p+r+2)$ and any $N\geq 1$, there exists $C$ such that for all $t\geq 0$:
  \begin{align*}
    \bE_{\bfx_0 \sim \bar{q}}\left[\left\lvert \FN(\bfx_t) - \pi(f)\right\rvert^s \right]\leq C N^{-s/2},
  \end{align*}
  where the constant $C$ depends on $r, p, s, q(f^r), q(\omega^p), q(f^2\cdot \omega^2)$.
  When $ r = \infty $, the statement holds for $f$ such that $|f|_\infty < \infty$ and all $s \leq p $.
\end{prop}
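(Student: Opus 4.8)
The plan is to bound $\bE[|\hat F(\bfx_t) - \pi(f)|^s]$ by comparing the PIMH chain at time $t$ to the stationary chain, which is precisely $\bfx_0\sim\bar q$ itself since $\hat F(\bfx_0) - \pi(f)$ is controlled by Theorem~\ref{thm: snis unbounded convergence}. The key observation is that the marginal law of $\bfx_t$ (when $\bfx_0\sim\bar q$) equals $\bar q P^t$, and we already know from Theorem~\ref{thm:convergence_rate_from_q} that $|\bar q P^t - \bar\pi|_{\mathrm{TV}} \leq C N^{-1/2}(1+t)^{-(p-1)}$; moreover $\bar\pi$ is the invariant law, for which $\bE_{\bar\pi}[|\hat F(\bfx) - \pi(f)|^s]$ is not directly what Theorem~\ref{thm: snis unbounded convergence} gives (that theorem is about $\bfx\sim\bar q$). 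So instead I would use the lagged coupling representation directly: under Algorithm~\ref{alg:coupledPIMHlag} with $\bfx_0,\bfy_0\sim\bar q$, the chains $(\bfx_t)$ and $(\bfy_t)$ both have the $\bar q P^t$ marginal, and $\hat F(\bfx_t) = \hat F(\bfy_{t-1})$ for $t\geq\tau$. Hence, writing $\Delta_t = \hat F(\bfx_t)-\hat F(\bfy_{t-1})$ as in \eqref{eq:usnis:errordecomp2}, we can telescope: $\hat F(\bfx_t) - \pi(f) = \hat F(\bfx_0) - \pi(f) + \sum_{u=1}^{t}\Delta_u + \sum_{u=t+1}^{\infty}\Delta_u\mathds{1}(\tau>u)$ is not quite it — cleaner is to simply note $\hat F(\bfx_t)$ and $\hat F(\bfx_0)$ have marginals $\bar q P^t$ and $\bar q$ respectively, and bound the difference of their $s$-th moments by the coupling.

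More concretely, the first step is to write
\[
\bE\bigl[|\hat F(\bfx_t) - \pi(f)|^s\bigr]^{1/s} \leq \bE\bigl[|\hat F(\bfx_0)-\pi(f)|^s\bigr]^{1/s} + \bE\bigl[|\hat F(\bfx_t) - \hat F(\bfx_0)|^s\bigr]^{1/s}
\]
by Minkowski, where I use that $(\bfx_0,\bfx_t)$ can be taken from a single lagged-coupling run (so $\hat F(\bfx_t) - \hat F(\bfx_0) = \sum_{u=1}^{\min(t,\tau-1)}\Delta_u$ along the coupled trajectory, using $\bfx_1 = \bfy_0$ on $\{\tau=1\}$ etc.). The first term is $O(N^{-1/2})$ by Theorem~\ref{thm: snis unbounded convergence}. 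For the second term, again by Minkowski $\bE[|\sum_{u=1}^{\infty}\Delta_u\mathds{1}(\tau>u)|^s]^{1/s} \leq \sum_{u\geq 1}\bE[|\Delta_u|^s\mathds{1}(\tau>u)]^{1/s}$, and by Hölder with exponents $\rho,\rho'$ this is at most $\sum_{u\geq 1}\bE[|\Delta_u|^{s\rho}]^{1/(s\rho)}\bP(\tau>u)^{1/(s\rho')}$. Now $|\Delta_u| \leq |\hat F(\bfx_u)-\pi(f)| + |\hat F(\bfy_{u-1})-\pi(f)|$, and each $\hat F(\bfx_u)$, $\hat F(\bfy_{u-1})$ has the $\bar q P^u$ (resp. $\bar q P^{u-1}$) marginal; I need a uniform-in-$t$ bound $\sup_t \bE_{\bar q P^t}[|\hat F - \pi(f)|^{s\rho}] < \infty$. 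That uniform bound is the crux and I would get it by induction/monotonicity along the chain or, more simply, by the same Minkowski argument bootstrapped: it suffices to have it at $t=0$ (Theorem~\ref{thm: snis unbounded convergence} with exponent $s\rho$ in place of $s$, which needs $s\rho \leq pr/(p+r+2)$) plus a crude uniform bound on $\bE[|\hat F(\bfx_t)-\hat F(\bfx_0)|^{s\rho}]$ that does not blow up with $t$ — and this last one follows because $\bP(\tau>u)$ is summable (Proposition~\ref{prop:upb_meetingproba} gives $\bP(\tau>u)\leq C N^{-1/2} u^{-p}$ with $p\geq 2$) so the tail sum $\sum_{u>t}$ is controlled, and the partial sum up to $t$ is handled by the same Hölder estimate. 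Finally, choosing $\rho$ slightly larger than $1$ so that $s\rho \leq pr/(p+r+2)$ still holds — possible because the hypotheses only require $2\leq s\leq pr/(p+r+2)$ with the inequality allowed to be strict when $s$ is in the interior, but for the boundary case $s = pr/(p+r+2)$ one instead takes $\rho=1$ and absorbs the $\bP(\tau>u)^{1/(s\rho')}$ factor differently — and using $\sum_u \bP(\tau>u)^{\theta} < \infty$ for $\theta$ close to $1$ (which holds since $\bP(\tau>u)$ decays polynomially of order $p\geq 2$, giving room), the series converges and yields the $O(N^{-s/2})$ bound, since every $N^{-1/2}$ factor from Theorem~\ref{thm: snis unbounded convergence} and from $\bP(\tau>u)$ (Proposition~\ref{prop:upb_meetingproba}) contributes multiplicatively.

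The main obstacle I anticipate is the boundary exponent issue: Theorem~\ref{thm: snis unbounded convergence} is stated for $s\leq pr/(p+r+2)$, so to apply Hölder and raise $|\Delta_u|$ to a power $s\rho>s$ I need strictly more room, which I do not have when $s$ equals the endpoint. The fix is to not split the $\Delta_u$ and $\mathds{1}(\tau>u)$ terms by Hölder at all for the endpoint case, but rather use that $|\hat F(\bfx_u)|\leq \max_n |f(x_n)|$ is a deterministic bound combined with $q(f^r)<\infty$, or to use a direct bound $|\Delta_u\mathds{1}(\tau>u)|^s \leq |\Delta_u|^s \mathds{1}(\tau>u)$ and then Cauchy--Schwarz-type splitting with exponent $2$ on an event where we have extra integrability of $\hat F(\bfx_u)$; alternatively, and most cleanly, observe that $s\leq pr/(p+r+2) < pr/(p+r) = $ the integrability exponent of $f\omega$, so there is always a sliver of room $s\rho \leq pr/(p+r+\epsilon)$ for small $\epsilon$ and the corresponding slightly-weaker-than-Theorem~\ref{thm: snis unbounded convergence} bound (with rate possibly worse than $N^{-s\rho/2}$ but still bounded uniformly in $t$ after the coupling argument) is all we need for the $\Delta_u$ factor, the crucial $N^{-s/2}$ rate coming entirely from the leading term and the $\bP(\tau>u)$ factors. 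The case $r=\infty$ is easier since $|\hat F|\leq |f|_\infty$ deterministically, so all moments of $\Delta_u$ are bounded and the argument reduces to summing $\bP(\tau>u)^{1/s}$, which converges for $p\geq 2$ and $s<p$.
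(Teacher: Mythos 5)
There is a genuine gap in your plan. First, the claimed identity $\hat F(\bfx_t) - \hat F(\bfx_0) = \sum_{u=1}^{\min(t,\tau-1)}\Delta_u$ is false: take $\tau = 1$, so $\bfx_1 = \bfy_0$ and the sum is empty, yet $\hat F(\bfx_t) - \hat F(\bfx_0)$ is nonzero in general. The chains $\bfx_u$ and $\bfy_{u-1}$ live at the same lag-$1$ offset for all $u$, so the increments $\Delta_u = \hat F(\bfx_u) - \hat F(\bfy_{u-1})$ do not telescope to $\hat F(\bfx_t) - \hat F(\bfx_0)$; they telescope to the unbiasedness identity for $\hat F_u$ in \eqref{eq:def:unbiasedsnis}, which is a different object. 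Second, and more fundamentally, your plan is circular: to bound $\bE[|\Delta_u|^{s\rho}]$ you need a uniform-in-$u$ bound on $\bE_{\bar q P^u}[|\hat F - \pi(f)|^{s\rho}]$, which is precisely Proposition~\ref{prop:pimh} with exponent $s\rho$ in place of $s$ — the very statement under proof. Your proposed ``bootstrapping'' escalates the required exponent at each level ($s \to s\rho \to s\rho\rho' \to \cdots$) and never terminates unless $r=\infty$, so no valid induction base is reached. This is also visible in the paper's logic: the paper proves Proposition~\ref{prop:pimh} first by a direct argument, and \emph{then} uses it in the proof of Proposition~\ref{prop:unbiasedsnis_has_finitemoments_unbounded} via exactly the Minkowski–H\"older split you propose, because by that stage the moment bound is already available.

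The paper's actual proof avoids the coupling entirely. It decomposes the trajectory up to time $t$ according to the time $j$ of the \emph{last} acceptance (if any): on the no-acceptance event $\bfx_t = \bfx_0$, which is Theorem~\ref{thm: snis unbounded convergence}; on the event that the last acceptance is at time $j$, the state $\bfx_t = \bfx_j$ is drawn from $\alpha(\bfx_{j-1},\cdot)\,\bar q(\cdot)$ and stays put with probability $r(\bfx_j)^{t-j}$. Summing the geometric factor over $j$ yields a factor $(1-r(\zeta))^{-1}$ inside a $\bar q$-expectation, which the paper then controls by splitting on $\hat Z(\zeta)\lessgtr 2$: when $\hat Z(\zeta)\leq 2$ the factor $(1-r(\zeta))^{-1}$ is bounded by a constant via Lemma~\ref{lem:rupb}, and when $\hat Z(\zeta)>2$ it is bounded by a constant times $\hat Z(\zeta)$, which is then traded against the tail of $\hat Z$ via Markov. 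This exploits the renewal structure of (P)IMH at acceptance times rather than the lagged coupling, and delivers the $N^{-s/2}$ rate uniformly in $t$ without any circular dependence on higher-exponent versions of the same statement.

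A smaller issue: in the $r=\infty$ case you conclude a bound valid for $s<p$, whereas the statement asserts $s\leq p$; and even in that case the trivial pointwise bound $|\hat F|\leq |f|_\infty$ gives you finiteness but not the $N^{-s/2}$ rate, so the bounded case is not as immediate as your last paragraph suggests.
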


By Minkowski's inequality, under the conditions of Proposition~\ref{prop:pimh}, the moments of 
$\Delta_t$ have similar bounds. This can be used to obtain the following result, proven
in Appendix~\ref{appx:proof:momentsusnis_unbounded}.

\begin{prop}\label{prop:unbiasedsnis_has_finitemoments_unbounded}
  Assume that there exist $p \in (2,\infty)$ and $r \in [2,\infty]$ such that $q(\omega^p) < \infty$ and $q(|f|^r) <\infty$, and $q(f^2 \cdot \omega^2)<\infty$, 
  then for any $2 \leq s < p$ such that $pr/(p+r+2) > ps / (p-s)$, and for any $N\geq 1$, 
  the unbiased importance sampling (Coupling-UIS) estimator satisfies:
  \begin{align*}
    \bE\left[\left\lvert \FNu - \pi(f)\right\rvert^s \right]\leq C N^{-s/2},
  \end{align*}
  where the constant $C$ depends on $r, p, s, q(|f|^r), q(\omega^p), q(f^2 \cdot \omega^2)$. 
  When $r = \infty $, the statement holds for $f$ such that $|f|_\infty < \infty$ and all $s < p$ such that $p > sp/(p-s)$.
\end{prop}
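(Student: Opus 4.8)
\textbf{Proof plan for Proposition~\ref{prop:unbiasedsnis_has_finitemoments_unbounded}.}
The plan is to bound $\lVert \hat F_u - \pi(f)\rVert_s$ via Minkowski's inequality by splitting $\hat F_u - \pi(f)$ as in \eqref{eq:usnis:errordecomp} into the IS error $\hat F(\bfx_0) - \pi(f)$ and the bias cancellation term $\mathrm{BC} = \sum_{t\geq 1}\Delta_t \mathds{1}(\tau > t)$. The first piece is handled directly by Theorem~\ref{thm: snis unbounded convergence}, which under the stated hypotheses gives $\bE_{\bar q}[|\hat F(\bfx_0) - \pi(f)|^s] \leq C N^{-s/2}$ for $2\le s \le pr/(p+r+2)$; note $s < p$ here, which is the relevant constraint. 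So the work is entirely in bounding the $s$-th moment of $\mathrm{BC}$, and I would aim to show $\bE[|\mathrm{BC}|^s]\le C N^{-s/2}$ as well.

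For $\mathrm{BC}$, the strategy is a Hölder split between the increments $\Delta_t$ and the meeting-time indicator. First apply Minkowski (triangle inequality in $L^s$) to get
\begin{equation*}
  \lVert \mathrm{BC}\rVert_s \leq \sum_{t=1}^\infty \lVert \Delta_t \mathds{1}(\tau>t)\rVert_s .
\end{equation*}
Then, for each $t$, pick conjugate exponents: write $\lVert \Delta_t \mathds{1}(\tau>t)\rVert_s \le \lVert \Delta_t\rVert_{s'} \, \bP(\tau>t)^{1/s - 1/s'}$ via Hölder, where $s' = pr/(p+r+2) \wedge$ (something $> s$) is chosen so that $s' > s$ and $s'$ is still within the range where Proposition~\ref{prop:pimh} applies. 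Proposition~\ref{prop:pimh} controls $\lVert \Delta_t\rVert_{s'}$: since $\Delta_t = \hat F(\bfx_t) - \hat F(\bfy_{t-1})$ with both $\bfx_t$ and $\bfy_{t-1}$ marginally distributed as PIMH chains started from $\bar q$, a further Minkowski step and Proposition~\ref{prop:pimh} give $\lVert \Delta_t\rVert_{s'} \leq C N^{-1/2}$ uniformly in $t$. For the tail factor, Proposition~\ref{prop:upb_meetingproba} gives $\bP(\tau > t) \leq C/(\sqrt N\, t^{p})$. Collecting, $\lVert \Delta_t\mathds{1}(\tau>t)\rVert_s \le C N^{-1/2} \cdot (N^{-1/2} t^{-p})^{1/s - 1/s'}$, and summing over $t$ converges provided $p(1/s - 1/s') > 1$, i.e.\ provided $s' > ps/(p-s)$. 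This is exactly where the hypothesis $pr/(p+r+2) > ps/(p-s)$ enters: it guarantees we can choose an admissible $s'$ in the half-open interval $\bigl(ps/(p-s),\, pr/(p+r+2)\bigr]$, and also that $s' \leq pr/(p+r+2)$ so Propositions~\ref{prop:pimh} is usable with exponent $s'$. The power of $N$ then aggregates to at least $N^{-1/2 - (1/s - 1/s')/2} \le N^{-1/2}$ per term with a summable $t$-dependence, giving $\lVert \mathrm{BC}\rVert_s \le C N^{-s/2}$ after raising to the $s$-th power; one absorbs the extra negative power of $N$ into the constant since $N\ge1$.

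The main obstacle is the bookkeeping around the choice of the intermediate exponent $s'$: one must simultaneously satisfy $s' > s$ (for Hölder to leave a positive power on the tail), $s' > ps/(p-s)$ (for summability in $t$), $s' \le pr/(p+r+2)$ (so Propositions~\ref{prop:pimh} and~\ref{thm: snis unbounded convergence} apply at exponent $s'$), and $s' < p$; checking these are jointly feasible under the stated hypothesis $pr/(p+r+2) > ps/(p-s)$ (note $ps/(p-s) > s$ automatically since $p > p-s$, and $ps/(p-s) < p$ is not always true, so one may need $ps/(p-s) \le pr/(p+r+2)$ to be interpreted with a strict inequality allowing room) is the delicate point. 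The $r=\infty$ case is analogous with $f$ bounded: Proposition~\ref{prop:pimh} then holds for all exponents up to $p$, so one takes any $s' \in (ps/(p-s), p)$, which is nonempty precisely when $p > ps/(p-s)$, matching the stated condition. A secondary, routine obstacle is justifying the interchange of the infinite sum and expectation (dominated convergence / Fubini for $L^s$ norms), which follows once the above bound shows the series of norms converges.
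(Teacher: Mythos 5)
Your proposal is correct and follows essentially the same route as the paper's own proof: decompose $\hat{F}_u - \pi(f)$ into the IS error and the bias-cancellation term, apply Theorem~\ref{thm: snis unbounded convergence} to the former, then for BC use Minkowski followed by a term-by-term H\"older split between $\|\Delta_t\|_{s'}$ (controlled by Proposition~\ref{prop:pimh}) and $\mathbb{P}(\tau>t)^{1/s-1/s'}$ (controlled by Proposition~\ref{prop:upb_meetingproba}), with the summability condition $s' > ps/(p-s)$ and the applicability condition $s' \leq pr/(p+r+2)$ jointly feasible exactly under the stated hypothesis. The paper parametrizes the intermediate exponent as $s' = s\kappa$ with $\kappa > p/(p-s)$, but that is a cosmetic relabeling of your $s'$; the logic, the inequalities used, and the final aggregation of powers of $N$ are the same.
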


\begin{remark} The construction of Coupling-UIS in \eqref{eq:def:unbiasedsnis}
  can be generalized to the case of i.i.d pairs of variables $(\Omega_n,F_n)$ 
  that jointly
  follow a distribution $p$ on $\mathbb{R}_+\times \mathbb{R}$. In that case a similar
  construction allows the unbiased estimation of $\mathbb{E}[\Omega \cdot F]/\mathbb{E}[\Omega]$.
  As noted in \citet[][Section 4]{blanchet2015unbiasedmultilevel}, this includes steady-state regenerative simulation.
\end{remark}


\subsection{The asymptotic price of bias removal\label{subsec:dsnis:mse}}

We consider the price of debiasing self-normalized importance sampling using Coupling-UIS in terms of inefficiency, 
defined by the mean squared error multiplied by the average cost \citep[see e.g.][]{glynn1992asymptotic}. We start by comparing the mean squared errors
of unbiased and regular IS.
From \eqref{eq:usnis:errordecomp}, squaring and using Cauchy--Schwarz, we obtain
\begin{align}\label{eq:MSE_unbiasedSNIS_decompCS}
  &\left|\mathbb{E}\left[(\FNu - \pi(f) )^2\right] - \mathbb{E}\left[(\FN(\bfx_0) - \pi(f) )^2\right]\right|\nonumber\\
  &\leq   
  \sqrt{ \mathbb{E}\left[(\FN(\bfx_0) - \pi(f) )^2\right]\cdot \mathbb{E}[\text{BC}^2]} + \mathbb{E}[\text{BC}^2],
\end{align}
with $\text{BC} =\sum_{t=1}^{\tau - 1} \{\FN(\bfx_t) - \FN(\bfy_{t-1})\}$.

The mean squared error (MSE) of IS, which is the term $\mathbb{E}[(\FN(\bfx_0) - \pi(f) )^2]$, is of order $N^{-1}$ under conditions stated in Theorem~\ref{thm: snis unbounded convergence}.
If we can bound $ \mathbb{E}[\text{BC}^2]$ by a term that decreases faster than $N^{-1}$, then the MSE of Coupling-UIS would be asymptotically equivalent to that of IS.
Intuitively, the bias cancellation term BC in \eqref{eq:usnis:errordecomp2} goes to zero for two reasons: first because $\tau$ goes to one as $N\to\infty$, since two independent draws of $\ZN(\bfx)$ are more likely to take similar values as $N$ increases.
Furthermore, each term $\Delta_t=\FN(\bfx_t) - \FN(\bfy_{t-1})$ goes to $0$ as $N\to\infty$, as described in Proposition~\ref{prop:pimh}.
We obtain the following result, proven in Appendix~\ref{appx:proof:dsnis:mseequiv}.

\begin{prop}\label{prop:dsnis:mseequiv}
  Let $\FNu$ be the Coupling-UIS estimator \eqref{eq:def:unbiasedsnis} and $\FN(\bfx)$ with $\bfx\sim \bar{q}$ be the IS estimator. Suppose that the assumptions of Proposition~\ref{prop:unbiasedsnis_has_finitemoments_unbounded} are satisfied with $s=2$, that is: $p>2$ and $r> 2$ such that $q(\omega^p) < \infty$, $q(|f|^r) <\infty$, and $q(f^2 \cdot \omega^2)<\infty$, with $2p+4r+4 < r\cdot p$.
  Then the mean squared errors of $\FNu$ and $\FN(\bfx)$ are asymptotically equivalent:
  \[\lim_{N\to\infty} N\cdot \mathbb{E}\left[(\FNu - \pi(f))^2\right] = \lim_{N\to\infty} N\cdot \mathbb{E}_{\bar{q}}\left[(\FN(\bfx) - \pi(f))^2\right].\] 
\end{prop}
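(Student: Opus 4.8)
The plan is to control $\mathbb{E}[\mathrm{BC}^2]$ and show it is $o(N^{-1})$; by \eqref{eq:MSE_unbiasedSNIS_decompCS} and the fact that $\mathbb{E}[(\hat F(\bfx_0)-\pi(f))^2]$ is $\Theta(N^{-1})$ (Theorem~\ref{thm: snis unbounded convergence} with $s=2$, whose hypotheses are implied by the stated assumptions since $pr/(p+r+2)>2$ follows from $2p+4r+4<rp$), this will immediately give the claimed asymptotic equivalence. So the entire task reduces to a quantitative bound on $\mathbb{E}[\mathrm{BC}^2]$ where $\mathrm{BC}=\sum_{t=1}^{\infty}\Delta_t\mathds{1}(\tau>t)$ with $\Delta_t=\hat F(\bfx_t)-\hat F(\bfy_{t-1})$.

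First I would fix a small $\epsilon>0$ and split $\mathrm{BC}^2=\left(\sum_{t\ge1}\Delta_t\mathds{1}(\tau>t)\right)^2$ using a weighted Cauchy--Schwarz inequality of the form $\left(\sum_t a_t\right)^2\le \left(\sum_t t^{1+\epsilon}\right)\left(\sum_t t^{-(1+\epsilon)}a_t^2\right)$ restricted to $t<\tau$; more precisely, write $\mathrm{BC}^2\le \left(\sum_{t=1}^{\tau-1} t^{1+\epsilon}\right)\sum_{t=1}^{\infty} t^{-(1+\epsilon)}\Delta_t^2\mathds{1}(\tau>t)\le C_\epsilon\,\tau^{2+\epsilon}\sum_{t\ge1}t^{-(1+\epsilon)}\Delta_t^2\mathds{1}(\tau>t)$. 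Taking expectations and applying Hölder with conjugate exponents $a,b$ (to be chosen, with $a$ close to $1$ so that $\tau$ has enough moments and $b$ large) yields
\[
\mathbb{E}[\mathrm{BC}^2]\le C_\epsilon \sum_{t\ge1} t^{-(1+\epsilon)}\,\mathbb{E}\!\left[\tau^{(2+\epsilon)a}\mathds{1}(\tau>t)\right]^{1/a}\,\mathbb{E}\!\left[|\Delta_t|^{2b}\right]^{1/b}.
\]
For the $\Delta_t$ factor, Minkowski plus Proposition~\ref{prop:pimh} (applied to both $\hat F(\bfx_t)$ and $\hat F(\bfy_{t-1})$, which have the common PIMH-from-$\bar q$ marginal) gives $\mathbb{E}[|\Delta_t|^{2b}]^{1/b}\le C N^{-1}$ uniformly in $t$, provided $2b\le pr/(p+r+2)$; this is exactly where the condition $2b<pr/(p+r+2)$ and hence the need for the gap $2p+4r+4<rp$ enters, choosing $b$ slightly above $1$. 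For the $\tau$ factor, Proposition~\ref{prop:upb_meetingproba} gives $\mathbb{P}(\tau>t)\le C N^{-1/2} t^{-p}$, and a standard tail-sum computation bounds $\mathbb{E}[\tau^{(2+\epsilon)a}\mathds{1}(\tau>t)]$ by something like $C N^{-1/2} t^{-(p-(2+\epsilon)a)}$ as long as $(2+\epsilon)a<p-1$, which is arrangeable since $p>2$ by taking $a$ near $1$ and $\epsilon$ small (using $p\ge 2$; strictly $p>2$ is needed to have room). Collecting, $\mathbb{E}[\mathrm{BC}^2]\le C N^{-1/(2a)} N^{-1}\sum_{t\ge1}t^{-(1+\epsilon)-(p-(2+\epsilon)a)/a}$, and the series converges, so $\mathbb{E}[\mathrm{BC}^2]=O(N^{-1-1/(2a)})=o(N^{-1})$.

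The main obstacle is the bookkeeping of the exponents: one must simultaneously (i) keep $2b$ below the threshold $pr/(p+r+2)$ so that Proposition~\ref{prop:pimh} applies with the uniform $N^{-1}$ rate, (ii) keep $(2+\epsilon)a$ below $p-1$ so that the truncated moments of $\tau$ are finite and decay like a negative power of $t$, and (iii) ensure the resulting power of $t$ in the summand exceeds $1$. Verifying that the hypothesis $2p+4r+4<rp$ (equivalently $pr/(p+r+2)>2$, with a strict gap) leaves enough slack to choose $a>1,b>1$ conjugate with both constraints met is the crux; I would carry this out by first taking $\epsilon\to0$ and $a\to1^+$, checking that the limiting constraints are $2<pr/(p+r+2)$ and $2<p-1$, i.e. $p>3$ together with $pr/(p+r+2)>2$ — if only $p>2$ is assumed one instead balances $a$ and $\epsilon$ against the available moments of $\tau$, which is why the statement restricts to the regime where the gap inequality holds. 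Finally, plugging $\mathbb{E}[\mathrm{BC}^2]=o(N^{-1})$ and $\mathbb{E}[(\hat F(\bfx_0)-\pi(f))^2]=\Theta(N^{-1})$ into \eqref{eq:MSE_unbiasedSNIS_decompCS}, after multiplying through by $N$, both the cross term $\sqrt{N\,\mathbb{E}[(\hat F(\bfx_0)-\pi(f))^2]\cdot N\,\mathbb{E}[\mathrm{BC}^2]}\cdot N^{-1/2}$-type contributions and $N\,\mathbb{E}[\mathrm{BC}^2]$ vanish as $N\to\infty$, yielding the stated limit equality.
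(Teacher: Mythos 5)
Your overall strategy — bound $\mathbb{E}[\mathrm{BC}^2]$, show it is $o(N^{-1})$, and plug into \eqref{eq:MSE_unbiasedSNIS_decompCS} — is the same as the paper's, but you reach the key bound on $\mathbb{E}[\mathrm{BC}^2]$ by a genuinely different route. The paper applies Minkowski to $\|\mathrm{BC}\|_2$ to get $\|\mathrm{BC}\|_2\leq\sum_t \|\Delta_t\mathds{1}(\tau>t)\|_2$, then H\"older on each term with a single exponent $\kappa$, giving $\|\mathrm{BC}\|_2\leq \sum_t\mathbb{E}[|\Delta_t|^{2\kappa}]^{1/(2\kappa)}\,\mathbb{P}(\tau>t)^{(\kappa-1)/(2\kappa)}$, and directly needs $\kappa>p/(p-2)$ and $2\kappa\leq pr/(p+r+2)$, whose compatibility is exactly $2p+4r+4<rp$. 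You instead use a weighted Cauchy--Schwarz to peel off a factor $\tau^{2+\epsilon}$ and then H\"older with conjugate $(a,b)$, which replaces the single parameter $\kappa$ by three parameters $(\epsilon,a,b)$ and brings in truncated moments of $\tau$ instead of just tail probabilities. Both are valid in spirit, but the paper's is cleaner and shows the gap condition emerges with less bookkeeping.

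There is, however, a genuine gap in your version as written: the tail-sum claim $\mathbb{E}[\tau^{(2+\epsilon)a}\mathds{1}(\tau>t)]\lesssim N^{-1/2}t^{(2+\epsilon)a-p}$ requires only $(2+\epsilon)a<p$, not $(2+\epsilon)a<p-1$. Your stronger constraint is what you'd get from the crude bound $\sum_{s>t}s^{q}\mathbb{P}(\tau>s)$; the sharp rate requires summation by parts (or $\mathbb{E}[\tau^q\mathds{1}(\tau>t)]=t^q\mathbb{P}(\tau>t)+q\int_t^\infty u^{q-1}\mathbb{P}(\tau>u)\,du$). The difference matters: with your stated constraint, you need $M/(M-1)\leq a <(p-1)/2$ where $M=pr/(2(p+r+2))$, and this window is \emph{empty} at the paper's own example $p=5,\ r=15$ (there $M/(M-1)=75/31\approx 2.42>2=(p-1)/2$), so the proof would not go through. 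With the correct threshold $(2+\epsilon)a<p$, the binding upper constraint on $a$ comes from the series convergence, namely $a<p/2$, and $M/(M-1)<p/2$ is exactly equivalent to $2p+4r+4<rp$, so the argument does close. Relatedly, your remark that the ``limiting constraints'' reduce to $p>3$ is off: the assumption $2p+4r+4<rp$ in fact forces $p>4$ (as noted in the paper's remark after this proposition), and your bookkeeping about sending $a\to 1^+$ does not make sense here because $b=a/(a-1)\to\infty$ would violate $2b\leq pr/(p+r+2)$; the correct picture is that $a$ must lie in the non-degenerate interval $\big(M/(M-1),\,p/2\big)$.
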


The assumption $2p+4r+4 <r\cdot p$ is for example satisfied if $r=\infty$ and $p=4+\epsilon$ with an arbitrary $\epsilon>0$,
or if $p=5$ and $r = 15$. However it cannot be satisfied with $p\leq 4$.

The cost of Coupling-UIS is that of running Algorithm~\ref{alg:coupledPIMHlag}. It starts with two draws from $\bar{q}$, 
i.e.\ $2N$ draws from $q$, and as many evaluations of the weight function $\omega$. Then either $\tau = 1$ or the algorithm enters its while loop up to the meeting time $\tau$, drawing $N$ new particles at each iterate of the loop. Counting the cost in units of number of evaluations of $\pi$, 
Coupling-UIS has an overall cost of $\mathcal{C} = 2N + N (\tau - 1)$. 
If Assumption~\ref{asm:pfinitemoments} holds with $p \geq 2$, using Proposition~\ref{prop:upb_meetingproba} then $\mathbb{E}[\tau - 1] \leq C N^{-1/2}$ for a finite constant $C$.
Thus, as $N\to\infty$, $\mathbb{E}[\mathcal{C}]$ is equivalent to $2N$. We can then compare the asymptotic inefficiencies of Coupling-UIS and IS as in the next statement. 

\begin{prop}\label{prop:dsnis:meantau}
  Denote the cost of the Coupling-UIS estimator $\FNu$ in \eqref{eq:def:unbiasedsnis} by $\mathcal{C} = 2N + N (\tau - 1)$. Under the conditions of Proposition~\ref{prop:dsnis:mseequiv}, the inefficiency, defined as expected cost multiplied by mean squared error, of Coupling-UIS is twice that of IS as $N\to\infty$.
\end{prop}

The reason for the efficiency loss in Coupling-UIS is that, in Algorithm~\ref{alg:coupledPIMHlag} the $N$ particles in $\bfy_0$ 
are required to determine $\tau$ but in the event $\{\tau = 1\}$, which is increasingly likely as $N\to\infty$, these $N$ particles do not participate directly in the estimator $\FNu$.

\subsection{An improved coupling-based unbiased importance sampling estimator\label{subsec:suis}}

A simple trick provides a remedy, and cuts the asymptotic inefficiency by a half. We can view $\FNu$ as a deterministic function of 
initial states $\bfx_0$ and $\bfy_0$ drawn from $\bar{q}$ independently,
as well as additional variables: an arbitrary long sequence of proposals from $\bar{q}$, and a sequence of uniform random variables used to accept or reject these proposals. Denote these additional variables by $\zeta$. Then $\FNu$ can be
written as $\mathcal{A}(\bfx_0,\bfy_0,\zeta)$, where $\mathcal{A}$ is a deterministic function, described by Algorithm~\ref{alg:coupledPIMHlag} and \eqref{eq:def:unbiasedsnis}.
Then we define the estimator
that averages over swapping $\bfx_0$ and $\bfy_0$:
\begin{equation}\label{eq:def:unbiasedsnis:symmetrized}
  \FNusym = \frac{1}{2}\left(\mathcal{A}(\bfx_0,\bfy_0,\zeta) + \mathcal{A}(\bfy_0,\bfx_0,\zeta)\right).
\end{equation}
Computing \eqref{eq:def:unbiasedsnis:symmetrized} only requires simple modifications of Algorithm~\ref{alg:coupledPIMHlag}.
Indeed, either $\ZN(\bfx_0)\geq \ZN(\bfy_0)$ or $\ZN(\bfx_0)<\ZN(\bfy_0)$.
In the first case, we always have $\mathcal{A}(\bfy_0,\bfx_0,\zeta) = \FN(\bfy_0)$, and $\mathcal{A}(\bfx_0,\bfy_0,\zeta)$ can be computed
following Algorithm~\ref{alg:coupledPIMHlag} and \eqref{eq:def:unbiasedsnis}. 
In the second case, we always have $\mathcal{A}(\bfx_0,\bfy_0,\zeta)= \FN(\bfx_0)$, 
and $\mathcal{A}(\bfy_0,\bfx_0,\zeta)$ can be computed following Algorithm~\ref{alg:coupledPIMHlag} and \eqref{eq:def:unbiasedsnis} with the role of $\bfx_0$ and $\bfy_0$ swapped.
The estimator \eqref{eq:def:unbiasedsnis:symmetrized} amounts to a \emph{Rao--Blackwellized} version of $\FNu$ in \eqref{eq:def:unbiasedsnis}, averaging over the arbitrary specification of which of the two draws from $\bar{q}$ is used as $\bfx_0$ or as $\bfy_0$.
The following statement is a mild variation of the previous results and is stated without a proof. 

\begin{prop}\label{prop:suisinefficiency}
  Consider the Coupling-based UIS estimator $\FNusym$ in \eqref{eq:def:unbiasedsnis:symmetrized}, with cost $\tilde{\mathcal{C}}$.
  Suppose that the conditions of Proposition~\ref{prop:dsnis:mseequiv} are satisfied.
  Then $\FNusym$ is an unbiased estimator of $\pi(f)$ for any $N\geq 1$, with finite expected cost and
  finite variance, and its inefficiency is equivalent to that of IS as $N\to\infty$.
\end{prop}

The estimator $\FNusym$ should be preferred to $\FNu$, as it is asymptotically twice as efficient,
but note that its cost is always larger.
Algorithm~\ref{alg:suis} provides pseudo-code for the Coupling-UIS estimator, which can be implemented under the same conditions as Algorithm~\ref{alg:snis} (SNIS) or Algorithm~\ref{alg:PIMH} (PIMH). 
We recall that $\ZN$ is defined in \eqref{eq:def:Zhat}, $\FN$ in \eqref{eq:def:Fhat},
and
$\bar{q}$ in \eqref{eq:pimhproposal}.
The only tuning parameter is $N$, which affects
the cost per estimate and its variance.
The pseudo-code also implements a Rao--Blackwellization argument \citep{casella:robert:1996}
that replaces each $\FN(\bfx_t)$ by 
\begin{align*}
  &\left(1 - \alpha(\bfx_{t-1},\bfxstar)\right) \FN(\bfx_{t-1}) + \alpha(\bfx_{t-1},\bfxstar) \FN(\bfxstar),
\end{align*}
where $\alpha(\bfx_{t-1},\bfxstar) = \min(1, {\ZN(\bfxstar)}/{\ZN(\bfx_{t-1})})$ is the acceptance probability of the PIMH step, as in \eqref{eq:def:pimh-acceptanceratio}.

\begin{algorithm}[ht]
  \caption{Coupling-based unbiased importance sampling (Coupling-UIS).}
  \label{alg:suis}
  \begin{enumerate}
    \item Sample \(\bfx = (\x_{1}, \ldots, \x_{N})\sim \bar{q}\)
    and \(\bfy = (\y_{1}, \ldots, \y_{N})\sim \bar{q}\) independently. 
    \item 
    If $\ZN(\bfx) < \ZN(\bfy)$, swap $\bfx$ and $\bfy$, so that
    $\ZN(\bfx)\geq\ZN(\bfy)$ henceforth.
    
    \item Set $\tau\leftarrow 1$.
    Initialize
    \[
    \FNusym\leftarrow
    \frac{1}{2}\FN(\bfx)+\frac{1}{2}\FN(\bfy).
    \]
    \item 
    Update estimate using Rao--Blackwellization over the acceptance step:
    \[
    \FNusym\leftarrow \FNusym + \frac{1}{2} \left(1-\alpha(\bfx,\bfy)\right)
    \left(
      \FN(\bfx) 
      - \FN(\bfy)\right).
    \]
    Draw $U\sim\mathrm{Uniform}(0,1)$. If $U\leq\ZN(\bfy)/\ZN(\bfx)$, go to step~6.
    
    \item Iterate until both conditions in \textup{e)} and \textup{f)} are satisfied:
    \begin{enumerate}
      \item $\tau\leftarrow\tau+1$.
      \item Sample $\bfxstar = (\x^\star_{1}, \ldots, \x^\star_{N})\sim \bar{q}$. 
      \item Update estimate using Rao--Blackwellization over the acceptance step:
      \begin{align*}\FNusym\leftarrow \FNusym
      +\frac{1}{2}&\left((1-\alpha(\bfx,\bfxstar))\hat{F}_N(\bfx) 
        -(1-\alpha(\bfy,\bfxstar))\hat{F}_N(\bfy) \right.\\
        &+ \left. (\alpha(\bfx,\bfxstar)- \alpha(\bfy,\bfxstar))\hat{F}_N(\bfxstar)
        \right).
      \end{align*}
      \item Draw $U\sim\mathrm{Uniform}(0,1)$.
      \item If $U\leq\ZN(\bfxstar)/\ZN(\bfx)$,
      set $\bfx\leftarrow\bfxstar$.
      \item If $U\leq\ZN(\bfxstar)/\ZN(\bfy)$,
      set $\bfy\leftarrow\bfxstar$.
    \end{enumerate}
    
    \item Set
    $\mathrm{cost}\leftarrow N(\tau+1)$.
    
    \item Return $\FNusym$.
  \end{enumerate}
\end{algorithm}


\subsection{Unbiased estimation of the inverse of the normalizing constant\label{subsec:inverse_Z}}

A particular case is the unbiased estimation of $1/Z$, where $Z = q(\omega_u)$ for an unnormalized weight function $\omega_u$ proportional to $\omega$. This problem was part of the motivation for \citet{wang2023unbiased,chopin_crucinio_singh_2025}, where such unbiased estimators are used for inference in doubly-intractable models.
Coupling-UIS can be directly used for this purpose by setting the test function as $f = 1/\omega_u$. In this case, IS gives $\FN(\bfx) = 1/\ZN(\bfx)$, which is biased. For Coupling-UIS, our previous results apply
with the choice of test function $f = 1/\omega_u$, but we obtain weaker conditions by leveraging, for example, the identity $\omega_u \cdot f = 1$. Thus, we rederive Propositions~\ref{prop:unbiasedsnis_has_finitemoments},
\ref{prop:unbiasedsnis_has_finitemoments_unbounded} and \ref{prop:suisinefficiency},
for that test function in Proposition~\ref{prop:inverse-CUIS}, proven in Appendix~\ref{appx:proof:inverse-uis}.

\begin{prop}[Properties of Coupling-UIS for $1/Z$]\label{prop:inverse-CUIS}
  Denote by $\hat{I}_N$ the Coupling-UIS estimator~\eqref{eq:def:unbiasedsnis} with $f = 1/\omega_u$.
  Assume that $q(\omega^p) < \infty$ for some $p \geq 2$ and $q(\omega^{-\eta}) < \infty$ for some $\eta > 0$.
  Then for any $1 \leq s < p$ and $N > sp/((p-s)\eta)$, $\hat{I}_N$ has $s$ finite moments:
  \begin{equation}\label{eq:uis-inverse-finitemoments}
    \bE\left[|\hat{I}_N - Z^{-1}|^s\right] < \infty.
  \end{equation}
  If furthermore $p > 2s$, then for $N \geq sp/((p-s)\eta)$ there exists $C > 0$ such that
  \begin{equation}\label{eq:uis-inverse-rate}
    \bE\left[|\hat{I}_N - Z^{-1}|^s\right] \leq C\, N^{-s/2}.
  \end{equation}
  For $s = 2$: $\hat{I}_N$ has finite variance for $p > 2$ and $N > 2p/((p-2)\eta)$; if $p > 4$, then
  \[\lim_{N\to\infty} N \cdot \bE\left[(\hat{I}_N - Z^{-1})^2\right] = \lim_{N\to\infty} N \cdot \bE_{\bar{q}}\left[\left(\ZN(\bfx)^{-1} - Z^{-1}\right)^{\!2}\right].\]
\end{prop}

The distinctive feature of Coupling-UIS
is the last item of the proposition, i.e. the asymptotic equivalence of the MSE of $\hat{I}_N$ and that of the SNIS estimator $1/\ZN(\bfx)$, which is the natural estimator of $1/Z$. We will see in experiments that the condition $p>4$ is not necessary.

\begin{remark}\label{rmk:inverse-comparison}
  The general Coupling-UIS moment bound (Proposition~\ref{prop:unbiasedsnis_has_finitemoments_unbounded}) specialized to $f = 1/\omega_u$ requires $q(\omega^{-r}) < \infty$ for some $r \geq 2$ together with the compatibility condition $pr/(p+r+2) > ps/(p-s)$. Proposition~\ref{prop:inverse-CUIS} replaces this by $q(\omega^{-\eta}) < \infty$ for any $\eta > 0$ and $p > 2s$, with no compatibility condition between $\eta$ and $p$.
  When $s = 2$, the general compatibility condition reduces to $2p + 4r + 4 < rp$, i.e.\ $r > (2p+4)/(p-4)$, which forces $p > 4$. Both results thus require $p > 4$, but they differ on the assumption required on $\omega^{-1}$. The general bound demands $q(\omega^{-r}) < \infty$ with $r > (2p+4)/(p-4)$, which is very restrictive for moderate $p$: for instance, $p = 5$ requires $r > 14$, $p = 6$ requires $r > 8$. Proposition~\ref{prop:inverse-CUIS}, on the other hand, requires only $p > 4$ and $q(\omega^{-\eta}) < \infty$ for any $\eta > 0$.
\end{remark}

\section{Applications\label{sec:applications}}

\subsection{Unbiased estimation of $1/Z$\label{subsec:applications:inverse_Z}}
We consider the unbiased estimation of $1/Z$ as in Section~\ref{subsec:inverse_Z}. As described in \citet[Section 4.3,][]{chopin_crucinio_singh_2025}
the problem arises for example in likelihood-based inference for exponential random graphs.
We compare Coupling-UIS with MLMC-UIS \citep{blanchet2015unbiasedmultilevel,ShiCornish} and Taylor-UIS
\citep{blanchet2015unbiasedtaylor,chopin_crucinio_singh_2025},
fully described in Appendix~\ref{app:other_uis}.

We consider Example~\ref{example:expo},
with $p = 3$. The importance weight has strictly less than $3$ finite moments. The value of $Z$ is $1$.
For SNIS and Coupling-UIS, the unique tuning parameter is the number of samples $N$, which we vary from $4$ to $128$. 
For MLMC-UIS, we select a Geometric parameter based on the number of moments of $\omega$ under $q$ as described in Appendix~\ref{app:other_uis}. We vary the initial level of the multi-level construction from $1$ to $5$, and we report both the ``single sample'' (SS) and the ``Russian roulette'' (RR) estimates \citep{ShiCornish}. For Taylor-UIS, the input estimators of $Z$ are based on $N$ samples, with $N$ ranging from $4$ to $128$. For each value of $N$, we tune Taylor-UIS following the guidelines of \citet{chopin_crucinio_singh_2025} and we do not count the cost of the tuning step, which is small.
The results are shown in Figure~\ref{fig:inverseZ_inefficiency}. 
The inefficiency is defined as the product of the variance of the estimator and its expected cost, where the cost is measured in number of evaluations of $\omega$ per estimator. The results are based on $250,000$ independent repeats of each method.

Firstly, we note that the inefficiencies of Coupling-UIS and SNIS converge to the same limit as $N$ increases, as predicted by Propositions~\ref{prop:suisinefficiency} and \ref{prop:inverse-CUIS}, 
even though the setting violates the assumption $p > 4$ in these propositions.
Secondly, Coupling-UIS provides a smaller inefficiency than 
the other unbiased estimators of $1/Z$, for all tuning parameters. This, combined with the fact that Coupling-UIS only requires a choice of $N$ as tuning parameter, suggests that Coupling-UIS is a competitive method for unbiased estimation of $1/Z$.

\begin{figure}
  \centering
  \includegraphics[width=.85\textwidth]{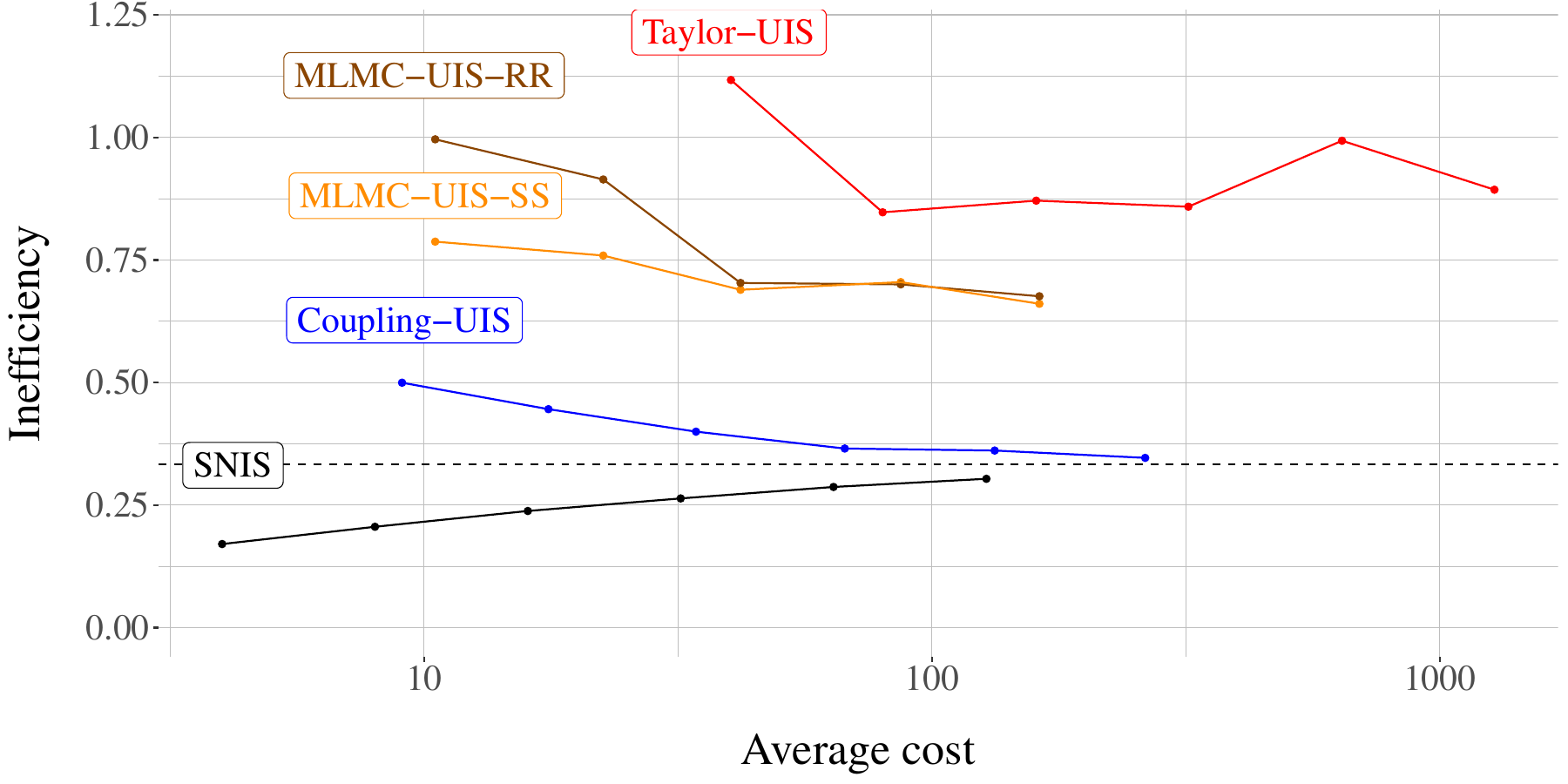}
  \caption{\label{fig:inverseZ_inefficiency} Inefficiency  against average cost, for different estimators of $1/Z$.
  For each method, we vary a tuning parameter that impacts both the inefficiency and the cost. For SNIS and Coupling-UIS the tuning parameter is the number of samples $N$; see Section~\ref{subsec:applications:inverse_Z}. The dashed line represents the inefficiency of SNIS as $N\to\infty$.}
\end{figure}

\subsection{Nested expectations\label{subsec:applications:nestedexpectations}}

We consider a nested expectation problem, as defined in Subsection~\ref{subsec:nested_expectations_bias}. The setting is that of Bayesian inference after multiple imputation of missing data \citep[e.g.][]{zhou2010note}.
We consider a logistic regression 
where the covariates  $X$ are continuous and contain missing entries. We generate the data by sampling $X\in \mathbb{R}^{n \times p}$ independently from a Normal$(0,1)$ distribution,
with $n=100$ and $p=10$. The response $Y$ is generated  given $X$ from a logistic regression with parameters $\beta^\star$ defined as $\beta^\star_i = i / 10$ for $i=1,\ldots,5$, and $\beta^\star_i = 0$ for $i=6,\ldots,10$. We obtain $\sum_{i=1}^n Y_i = 52$. We then remove some entries from $X$, with $X_{ij}$ becoming missing with probability $0.4$ if $Y_i = 1$ and $0.2$ if $Y_i = 0$, independently across $i$ and $j$.
As a result, $285$ entries are missing out of $1000$. We use the correct Normal$(0,1)$ distribution as the imputation model for each missing entry.

For each completed data set, we approximate the posterior distribution in the logistic regression model, using the Normal$(0, 10 \cdot I_p)$ distribution
as prior on the coefficients. We use importance sampling using as proposal 
a multivariate Student distribution with $10$ degrees of freedom, centered at the MLE, and with covariance matrix equal to the inverse of minus the Hessian of the log-posterior density at the MLE. We consider three variants of importance sampling: SNIS, with bias of order $N^{-1}$, the variant of \citet{skare2003improved}, referred to as SNIS-LOO,
where each importance weight is divided by the sum of the other $N-1$ importance weights, with bias of order $N^{-2}$, 
and finally the Coupling-UIS estimator from Section~\ref{subsec:suis}, with no bias.
We consider the test function that maps the parameter to its first component, 
so that we are estimating the posterior mean of the first coefficient, 
averaged over the distribution of the imputed data. For a total budget $C$, 
we need to allocate the budget between the number of imputed data sets $M$ and the number of samples $N$ per estimator for the inner expectation, so that $C = M N$.
We follow the optimal allocation from \citet{andradottir2016computing}:
for SNIS we use $M = C^{2/3}$ and $N = C^{1/3}$. For SNIS-LOO, we use $M = C^{4/5}$ and $N = C^{1/5}$. For Coupling-UIS, we use a fixed $N = 5$, and $M = C / 5$. In the latter case the cost is random and we report its average.

Figure~\ref{fig:nested_expectations} displays the MSE of the three methods as a function of the computing budget $C$. The results are based on $200$ independent repeats. The MSE are computed using an approximate ground truth set to the average of the $200$ estimators obtained with Coupling-UIS at the highest budget of $M = 32000$ and $N = 5$. From those $200$ estimators we compute a 95\% confidence interval for the quantity of interest and obtain $[0.2286, 0.2289]$. The figure shows regression lines corresponding to log-mean squared error over log-average cost. We obtain a slope of $-0.65$ for SNIS, $-0.83$ for SNIS-LOO and $-1.06$ for Coupling-UIS, matching closely the theoretical rates of $-2/3$, $-4/5$ and $-1$, respectively. The figure also shows that, for a total budget between $10^4$ and $3\times 10^5$, nested expectation estimators using Coupling-UIS outperform those using SNIS 
throughout the range, and start outperforming those using SNIS-LOO for the larger budgets.

\begin{figure}
  \centering
  \includegraphics[width=.85\textwidth]{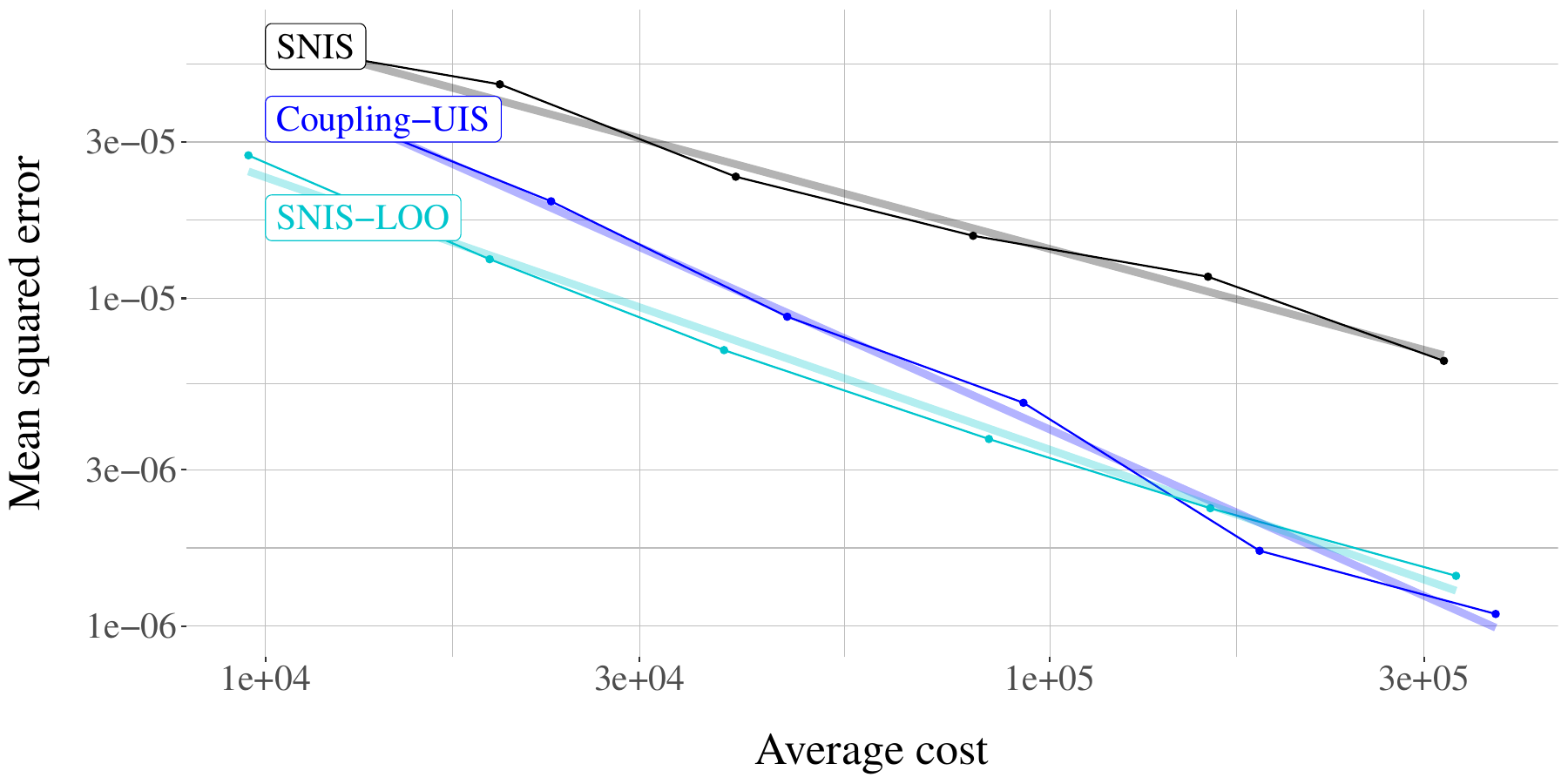}
  \caption{\label{fig:nested_expectations} Mean-squared error of nested expectation estimators, in the context of Bayesian inference after multiple imputation, using either SNIS, SNIS-LOO and Coupling-UIS for the inner expectation, as a function of the computing budget $C$. The dashed lines are regression lines fitted on $\log$-MSE $\sim$ $\log C$.}
\end{figure}

\subsection{Robust mean estimation\label{subsec:robust_mean_estimation}}


A motivation for eliminating the bias is that
one can then employ any statistical technique that requires unbiased inputs,
without requiring additional assumptions or modifications.
An example is robust mean estimation \citep{lugosi2019mean}. 
The goal is to estimate the mean $\mu$ of a random variable with finite variance $\sigma^2$ from $n$ i.i.d.\ copies $X_1,\ldots,X_n$: for a confidence level $\delta\in(0,1)$, we seek $\hat{\mu}_n = \hat{\mu}_n(X_1,\ldots,X_n,\delta)$ such that
the inequality
\begin{equation}\label{eq:robust_goal}
  \bP(|\hat{\mu}_n - \mu| > \epsilon) \leq \delta
\end{equation}
holds for the smallest possible $\epsilon = \epsilon(n,\delta)$. The empirical average satisfies~\eqref{eq:robust_goal} asymptotically with $\epsilon = \sigma\sqrt{2\log(2/\delta)/n}$ via the CLT. On the other hand, Chebyshev's inequality only yields  $\epsilon = \sigma\sqrt{1/(n\delta)}$, with a $\delta$-dependence that cannot be improved under finite variance alone \citep[Section~2]{lugosi2019mean}. Remarkably, implementable alternatives recover the sub-Gaussian rate. 
The Median-of-Means (MoM, \citet{nemirovskij1983problem}) estimator provides a prime example, where the $X_1,\ldots,X_n$ are 
partitioned into $K$ blocks of size $m$ with $n=mK$, and the estimator is obtained as the median of the blockwise means. The following result 
shows that the MoM estimator achieves sub-Gaussian performance.

\begin{thm}[Theorem $2$ in~\cite{lugosi2019mean}]\label{thm:momlugosimendelson}
Let $X_1, \ldots, X_n$ be i.i.d. random variables with mean $\mu\in\mathbb{R}$ and variance $\sigma^2\in (0,\infty)$. Let $\delta \in (0,1)$ and 
$K= \lceil 8 \log(1/\delta) \rceil$ be a number of blocks, and assume that $n = mK$ for some positive integer $m$.
Denote by $\hat{\mu}_{\textsc{MoM},n}$ the MoM estimator computed with $K$ blocks of size $m$. Then, with probability at most $\delta$,
for all $n\geq \lceil 8\log(1/\delta)\rceil$,
\[
    |\hat{\mu}_{\textsc{MoM},n} - \mu| > \sigma\sqrt{\frac{32 \log(1/\delta)}{n}}.
\]
\end{thm}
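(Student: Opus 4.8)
The plan is to run the classical median-of-means argument: a second-moment (Chebyshev) bound \emph{within} each block, followed by a binomial concentration bound on the number of atypical blocks. Write $n = mK$ with $K = \lceil 8\log(1/\delta)\rceil$; let $\bar{X}_1,\dots,\bar{X}_K$ be the $K$ block means, each an average of $m$ i.i.d.\ copies of $X$, so that $\mathbb{E}[\bar{X}_j]=\mu$ and $\mathbb{V}[\bar{X}_j]=\sigma^2/m$, and let $\hat{\mu}_{\textsc{MoM},n}$ be their median. Choose the radius $\epsilon := 2\sigma/\sqrt{m} = 2\sigma\sqrt{K/n}$; since $K \approx 8\log(1/\delta)$ (the ceiling contributes only a bounded multiplicative factor, which can be absorbed into the constant), $\epsilon \le \sigma\sqrt{32\log(1/\delta)/n}$, so it suffices to bound $\mathbb{P}(|\hat{\mu}_{\textsc{MoM},n}-\mu|\ge \epsilon)$. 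Note $n\ge \lceil 8\log(1/\delta)\rceil$ is precisely what guarantees $m\ge 1$, so every block is nonempty.

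First I would control each block. By Chebyshev's inequality, for every $j\in[K]$,
\[
\mathbb{P}\bigl(|\bar{X}_j-\mu|\ge \epsilon\bigr) \;\le\; \frac{\sigma^2/m}{\epsilon^2} \;=\; \frac14 .
\]
Set $B_j := \mathds{1}(|\bar{X}_j-\mu|\ge \epsilon)$. Since distinct blocks are built from disjoint portions of the i.i.d.\ sample, $B_1,\dots,B_K$ are independent Bernoulli variables with parameters $p_j\le 1/4$, hence $S := \sum_{j=1}^K B_j$ satisfies $\mathbb{E}[S]\le K/4$.

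Next I would invoke the elementary fact that the sample median of $\bar{X}_1,\dots,\bar{X}_K$ lies outside $(\mu-\epsilon,\mu+\epsilon)$ only if at least $K/2$ of the block means do (separating the two tails and counting), i.e. $\{|\hat{\mu}_{\textsc{MoM},n}-\mu|\ge\epsilon\}\subseteq\{S\ge K/2\}$. Applying Hoeffding's inequality to the bounded independent summands $B_j\in[0,1]$ and using $\mathbb{E}[S]\le K/4$,
\[
\mathbb{P}\bigl(|\hat{\mu}_{\textsc{MoM},n}-\mu|\ge\epsilon\bigr) \;\le\; \mathbb{P}\bigl(S-\mathbb{E}[S]\ge K/4\bigr) \;\le\; \exp\!\Bigl(-\tfrac{2(K/4)^2}{K}\Bigr) \;=\; e^{-K/8}\;\le\;\delta,
\]
the last step using $K=\lceil 8\log(1/\delta)\rceil\ge 8\log(1/\delta)$. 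Combining with $\epsilon\le\sigma\sqrt{32\log(1/\delta)/n}$ and the inclusion $\{|\hat{\mu}_{\textsc{MoM},n}-\mu|>\sigma\sqrt{32\log(1/\delta)/n}\}\subseteq\{|\hat{\mu}_{\textsc{MoM},n}-\mu|\ge\epsilon\}$ yields the claim.

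No step here is genuinely hard; the substance lies in the two-scale structure (Chebyshev inside a block, concentration across blocks) and in the calibration that keeps the per-block failure probability $1/4<1/2$. The two places that require care are the combinatorial reduction ``median far $\Rightarrow$ a majority of block means far,'' which should be written out for both tails with the correct rounding of $K/2$ and $K/4$, and the bookkeeping of constants: the precise value $32$ is tied to taking the Chebyshev radius $2\sigma/\sqrt{m}$ together with $K\approx 8\log(1/\delta)$, so one must carry the factor $2$ and the ceiling in $K$ through consistently. (An alternative to Hoeffding at the last step is the multiplicative Chernoff bound for $\mathrm{Binomial}(K,1/4)$, giving the same $e^{-\Theta(K)}$ decay.)
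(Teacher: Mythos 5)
The paper does not prove this statement; it is quoted verbatim as Theorem~2 of Lugosi and Mendelson (2019), so there is no in-paper proof to compare against. Your argument is the standard two-scale median-of-means proof---Chebyshev within each block to get a per-block failure probability at most $1/4$ at radius $\epsilon = 2\sigma/\sqrt{m}$, the combinatorial fact that the median is at distance at least $\epsilon$ from $\mu$ only if at least $K/2$ block means are, and then Hoeffding on the independent Bernoulli indicators to get $\exp(-K/8)\leq\delta$---and each of these steps is correctly carried out.

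The one loose point is the direction of the ceiling inequality. You assert $\epsilon = 2\sigma\sqrt{K/n}\leq \sigma\sqrt{32\log(1/\delta)/n}$, which is equivalent to $K\leq 8\log(1/\delta)$; but $K=\lceil 8\log(1/\delta)\rceil\geq 8\log(1/\delta)$, so in fact $\epsilon\geq \sigma\sqrt{32\log(1/\delta)/n}$ and the final set inclusion $\{|\hat{\mu}-\mu|>\sigma\sqrt{32\log(1/\delta)/n}\}\subseteq\{|\hat{\mu}-\mu|\geq\epsilon\}$ runs the wrong way. This is less a flaw in your reasoning than an imprecision built into the stated constant: the radius your Chebyshev step actually delivers is $\sigma\sqrt{4K/n}=\sigma\sqrt{4\lceil 8\log(1/\delta)\rceil/n}$, which coincides with $\sigma\sqrt{32\log(1/\delta)/n}$ exactly when $8\log(1/\delta)$ is an integer and is slightly larger otherwise. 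Either assume integrality, or report the radius as $\sigma\sqrt{4\lceil 8\log(1/\delta)\rceil/n}$, and the argument is airtight.
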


The combination of MoM with self-normalized importance sampling (SNIS) was explored in \citet{Dau2022}. The proposed estimator, termed MoM-SNIS, is the median of $K$ SNIS estimators obtained with $M$ draws each, so that the cost is equivalent to SNIS with $N=M\times K$ draws. The above result on MoM does not directly apply because SNIS is biased for the quantity of interest. \citet[Proposition 2,][]{Dau2022} shows that
MoM-SNIS achieves sub-Gaussian performance under minimal assumptions on the weight and test function, but only once $N$ exceeds a problem-dependent threshold involving the variance of the weights. 
Furthermore, as shown in \citet[][Proposition 3]{Dau2022}, it is not possible to obtain a sub-Gaussian concentration inequality for MoM-SNIS that would hold for all $N$ larger than a threshold that would depend on $\delta$ only.

In contrast, since Coupling-UIS estimators are unbiased,
we can directly use MoM with them, and obtain sub-Gaussian performance for all $n$ larger than $\lceil 8\log(1/\delta)\rceil$ as in Theorem~\ref{thm:momlugosimendelson},
where $n$ is a number of independent copies of Coupling-UIS.
We could also use any other robust mean estimator, such as those proposed in  \citet{minsker2021robust} or \citet{lee2022optimal}. Drawbacks relative to MoM-SNIS \citep{Dau2022} include the higher variance of Coupling-UIS relative to SNIS for matching costs, and the random nature of the computing cost.

We revisit Example~\ref{example:expo}, with $p = 2.1$, and the test function $f(x) = x$, 
so that $\pi(f) = 1$.
We compare the distribution of the errors of self-normalized importance sampling (SNIS)
with $N=4000$, with the MoM-SNIS method of \citet{Dau2022}, tuned with $\delta = 0.05$, leading to $K = \lceil 8 \log(1/\delta)\rceil = 24$ and $M=192$ so that $K\times M = 4608$ is comparable to $N$. 
For MoM-Coupling-UIS, we use $24$ samples for each unbiased estimator, 
and then we compute the median of $K = 24$ averages, each over $M = 3$
unbiased estimators per block. This leads to an average cost of $3856$
target density evaluations per MoM-Coupling-UIS estimator. We generate $10^6$ independent runs of SNIS with $4000$ samples, 
and then for MoM, of SNIS with $192$ samples and Coupling-UIS with $24$ samples. This allows us to collect 41,660 MoM-SNIS and 13,880 MoM-Coupling-UIS estimates.  

Figure~\ref{fig:qtles_robust} displays the quantiles of the absolute value of the difference between the estimates and $\pi(f)$, 
for SNIS, MoM-SNIS, and MoM-Coupling-UIS. The median absolute error is smaller for SNIS than for the robust estimates. However, for quantile of very high order, 
the error of SNIS increases faster than the error of the robust methods. From the experiments we observe a small advantage of MoM-SNIS over MoM-Coupling-UIS at comparable costs, so the appeal of the latter appears mostly theoretical.

\begin{figure}
  \centering
  \includegraphics[width=.85\textwidth]{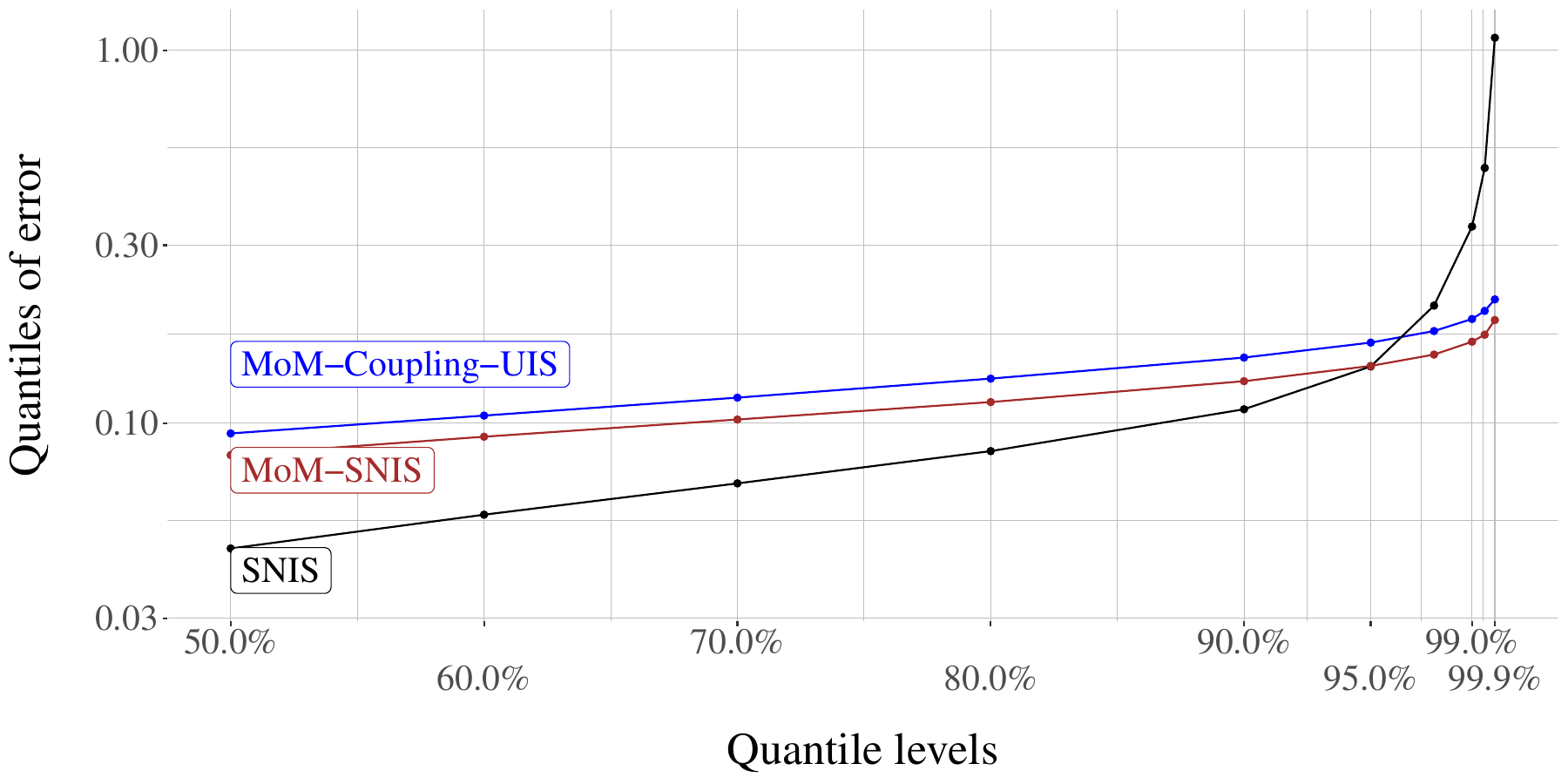}
  \caption{\label{fig:qtles_robust} Quantiles of the absolute error for SNIS, MoM-SNIS, MoM-Coupling-UIS for the Exponential example with $p=2.1$.}
\end{figure}

\paragraph*{Acknowledgements.}
  The first author was supported by the Engineering and Physical Sciences Research Council through
  grant EP/Y018273/1.
  The second author was supported by ERC Consolidator Grant UMCMC 101171415
  and France 2030 via the CY Initiative Emergence. The fourth author acknowledges support from the National Science Foundation through grant DMS-2210849.
  The authors are thankful to Hai Dang Dau, Mohamed Ndaoud, 
  Christian P. Robert, Yanbo Tang, Kamélia Daudel and Nicolas Chopin for helpful discussions, and to anonymous reviewers for constructive comments that improved the manuscript.

\newpage
\bibliographystyle{agsm}
\bibliography{refs}

\appendix

  \section{Proofs\label{appx:proofs}}
  
  \subsection{Proofs of Section~\ref{sec:intro}\label{appx:proofs:intro}}
  
  \begin{proof}[Proof of Proposition~\ref{prop:zhatconcentration}]
    It suffices to establish~\eqref{eq:prop:zhatconcentration:centred}, since~\eqref{eq:prop:zhatconcentration:uncentred} and~\eqref{eq:Z-tail-bound} then follow by the triangle inequality in $L^p$ and Markov's inequality, respectively.
    By the Marcinkiewicz--Zygmund-type inequality of~\citet{petrov1975sums} (Section III.5, item 16, p.~60), for $U_1,\ldots,U_N$ independent, mean-zero random variables with finite $p$-th moment, $p\geq 2$,
    \[\mathbb{E}\!\left[\left|\sum_{n=1}^N U_n\right|^p\right] \leq m(p)\, N^{p/2-1}\sum_{n=1}^N  \mathbb{E}[|U_n|^p],\]
    where $m(p)$ depends only on $p$ and satisfies $(m(p))^{1/p}\leq p-1$ \citep{RenLiang2001}.
    Setting $U_n = \omega(\x_n) - 1$, which has zero mean since $q(\omega)=1$, and passing to the average $\ZN(\bfx)-1 = N^{-1}\sum_{n=1}^N U_n$, we obtain
    \begin{equation}\label{eq:petrov-applied}
      \mathbb{E}_{\bar{q}}\!\left[| \ZN(\bfx)-1|^p\right]\leq (p-1)^p\, N^{-p/2}\, q(|\omega-1|^p).
    \end{equation}
    The $C_p$-inequality gives $q(|\omega-1|^p)\leq 2^{p-1}(1+q(\omega^p))$, and since $q(\omega^p)\geq q(\omega)^p = 1$ by Jensen's inequality, we have $1+q(\omega^p)\leq 2\,q(\omega^p)$, so $q(|\omega-1|^p) \leq 2^p\, q(\omega^p)$.
    Substituting into \eqref{eq:petrov-applied} and using $p-1\leq p$ yields
    \[
    \mathbb{E}_{\bar{q}}\!\left[| \ZN(\bfx)-1|^p\right]
    \;\leq\; (2p)^p\, q(\omega^p)\, N^{-p/2}
    \;=\; M(p)^p\, N^{-p/2},
    \]
    and taking $p$-th roots gives \eqref{eq:prop:zhatconcentration:centred}.
  \end{proof}  
  
  We recall the following result on the $p$-th moment of sums of independent random variables with $1\leq p\leq 2$.

  \begin{lem}[von Bahr--Esseen inequality \citep{vonbahr_esseen_1965}]\label{lem:vbe_less2}
    Let $X_1,\ldots,X_N$ be independent random variables with $\bE[X_i]=0$, and let $p\in[1,2]$. Then
    \[
    \bE\left[\left|\sum_{n=1}^N X_n\right|^p\right] \leq 2 \sum_{n=1}^N \bE\left[|X_n|^p\right].
    \]
  \end{lem}
  
The following proposition is a version of Proposition~\ref{prop:zhatconcentration} for $p\in(1,2)$.

  \begin{prop}\label{prop:zhatconcentration_less2}
    Under Assumption~\ref{asm:abscontinuitypositivity}, and assuming that $q(\omega^p)<\infty$ for some $p\in(1,2)$, then, for all $N\geq 1$,
    \begin{align}
      \mathbb{E}_{\bar{q}}\left[\left|\ZN(\bfx)-1\right|^p\right]
      &\leq \frac{2 q\left(|\omega-1|^p\right)}{N^{p-1}},\label{eq:prop:zhatconcentration:centred_less2}\\
      \mathbb{P}_{\bar{q}}\left(\ZN(\bfx)\geq 1+t\right)
      &\leq \frac{2 q\left(|\omega-1|^p\right)}{N^{p-1}t^p}, \qquad t>0.\label{eq:Z-tail-bound_less2}
    \end{align}
    If $\ZN(\bfxprime)$ is an independent copy of $\ZN(\bfx)$ under $\bar{q}$, then
    \begin{align}
      \mathbb{E}_{\bar{q}\otimes\bar{q}}\left[\left|\ZN(\bfx)-\ZN(\bfxprime)\right|^p\right]
      &\leq \frac{2^{p+1} q(\omega^p)}{N^{p-1}},\label{eq:Z-diff-bound_less2}\\
      \mathbb{E}_{\bar{q}\otimes\bar{q}}\left[\left|\ZN(\bfx)-\ZN(\bfxprime)\right|\right]
      &\leq 2^{(p+1)/p} q(\omega^p)^{1/p} N^{-(p-1)/p}.\label{eq:Z-diff-bound_one_less2}
    \end{align}
  \end{prop}

  \begin{proof}
    Let $\Omega_1,\ldots,\Omega_N$ be i.i.d. with law $q$, and write $\ZN=N^{-1}\sum_{n=1}^N \Omega_n$. Set $X_n:=\Omega_n-1$, so that $\bE[X_n]=0$. Lemma~\ref{lem:vbe_less2} gives
    \[
    \bE\left[\left|\sum_{n=1}^N (\Omega_n-1)\right|^p\right]
    \leq 2 \sum_{n=1}^N \bE\left[|\Omega_n-1|^p\right]
    = 2 N q\left(|\omega-1|^p\right).
    \]
    Dividing by $N^p$ yields \eqref{eq:prop:zhatconcentration:centred_less2}. Markov's inequality then gives \eqref{eq:Z-tail-bound_less2}.
    
    For \eqref{eq:Z-diff-bound_less2}, let $\Omega_1',\ldots,\Omega_N'$ be an independent copy and set $\ZN'=N^{-1}\sum_{n=1}^N \Omega_n'$. Then
    \[
    \ZN-\ZN' = \frac{1}{N}\sum_{n=1}^N (\Omega_n-\Omega_n'),
    \]
    and the summands are independent and mean-zero. Applying Lemma~\ref{lem:vbe_less2} again,
    \[
    \bE\left[|\ZN-\ZN'|^p\right]
    \leq \frac{2}{N^{p-1}}\bE\left[|\Omega-\Omega'|^p\right].
    \]
    Since $|a-b|^p\leq 2^{p-1}(a^p+b^p)$ for $a,b\geq 0$, we obtain $\bE[|\Omega-\Omega'|^p]\leq 2^p q(\omega^p)$, which gives \eqref{eq:Z-diff-bound_less2}. Finally, \eqref{eq:Z-diff-bound_one_less2} follows from Lyapunov's inequality.
  \end{proof}

  \subsection{Proofs of Section~\ref{sec:asbiasis}\label{appx:proofs:asbiasis}}
  
  \subsubsection{Proof of Theorem~\ref{thm:asbias_is}\label{appx:proofs:thm:asbias_is}}
  
  We start with a technical result on the inverse moments of averages, which may be well-known.
  
  \begin{prop}\label{prop:inversemoments}
    Let $r\geq 1$ and let $\bfx=(\x_1,\ldots,\x_N) \sim \bar{q}$.
    Suppose $\omega:\mathbb{X}\to(0,\infty)$  satisfies $q(\omega^{-\eta})<\infty$ for some $\eta>0$.
    Then, for all integers $N\geq r/\eta$,
    \begin{equation}\label{eq:inversemoment_bound}
      \mathbb{E}_{\bar{q}}\!\left[\ZN(\bfx)^{-r}\right]
      \;\leq\;
      q(\omega^{-\eta})^{r/\eta}
      \;<\;\infty.
    \end{equation}
    The bound on the right-hand side is independent of $N$.
  \end{prop}
  
  \begin{proof}
    Write $\Omega_n=\omega(\x_n)$ and recall $\ZN(\bfx)=N^{-1}\sum_{n=1}^N\Omega_n$.  
    By the AM--GM inequality applied to $\Omega_1,\ldots,\Omega_N$,
    \[
    \ZN(\bfx) = \frac{\Omega_1+\cdots+\Omega_N}{N} \;\geq\; (\Omega_1\cdots\Omega_N)^{1/N},
    \]
    so, taking reciprocals and raising to the power $r$,
    \[
    \ZN(\bfx)^{-r} \;\leq\; (\Omega_1\cdots\Omega_N)^{-r/N}
    = \prod_{n=1}^N \Omega_n^{-r/N}.
    \]
    Taking expectations and using independence of $\x_1,\ldots,\x_N$ under $\bar{q}$,
    \[
    \mathbb{E}_{\bar{q}}\!\left[\ZN(\bfx)^{-r}\right]
    \;\leq\;
    \prod_{n=1}^N q(\omega^{-r/N})
    \;=\;
    q(\omega^{-r/N})^N.
    \]
    Since $N\geq r/\eta$, we have $r/N\leq\eta$, so the map $t\mapsto t^{r/(N\eta)}$ is concave on $(0,\infty)$.
    Applying Jensen's inequality to the random variable $\omega(x)^{-\eta}$ with $x\sim q$,
    \[
    q(\omega^{-r/N})
    \;=\;
    q\!\left[(\omega^{-\eta})^{r/(N\eta)}\right]
    \;\leq\;
    q(\omega^{-\eta})^{r/(N\eta)}.
    \]
    Substituting back,
    \[
    q(\omega^{-r/N})^N
    \;\leq\;
    q(\omega^{-\eta})^{r/\eta},
    \]
    which gives the claimed bound.
  \end{proof}
  
  \begin{proof}[Proof of Theorem~\ref{thm:asbias_is}]
    We first write the rescaled bias of the SNIS estimator as
    \begin{align*}
      N \times \mathbb{E}_{\bar{q}}\left[\FN(\bfx) - \pi(f)\right] &=
      \mathbb{E}\left[\frac{\sum_{n=1}^N \omega(\x_n)(f(\x_n) - \pi(f))}{\sum_{n=1}^N \omega(\x_n) / N}\right]\nonumber\\
      &= N \mathbb{E}\left[\frac{ \omega(\x_1)(f(\x_1) - \pi(f))}{\sum_{n=1}^N \omega(\x_n) / N}\right] \quad \text{by identity in distribution}\\
      &= N \mathbb{E}\left[\frac{ \omega(\x_1)(f(\x_1) - \pi(f))}{\sum_{n=2}^N \omega(\x_n) / N}\right]\\
      &\hspace*{-2cm}+ N \mathbb{E}\left[\omega(\x_1)(f(\x_1) - \pi(f)) \left\{\frac{1}{\sum_{n=1}^N \omega(\x_n) / N} - \frac{1}{\sum_{n=2}^N \omega(\x_n) / N}\right\}\right].
    \end{align*}
    Since $\x_1$ is independent of $(\x_2,\ldots,\x_N)$ and $q(\omega(f-\pi(f))) = q(\omega f) - \pi(f)q(\omega) = 0$, the first term vanishes. For the second term,
    \begin{equation}\label{eq:diffratioNandNm1}
      \frac{1}{\sum_{n=1}^N \omega(\x_n) / N} - \frac{1}{\sum_{n=2}^N \omega(\x_n) / N} = \frac{-\omega(\x_1)/N}{(\sum_{n=1}^N \omega(\x_n) / N)(\sum_{n=2}^N \omega(\x_n) / N)}.
    \end{equation}
    Substituting, we obtain
    \begin{align*}
      N \times \mathbb{E}_{\bar{q}}\left[\FN(\bfx) - \pi(f)\right] &=
      - \mathbb{E}\left[\frac{\omega(\x_1)^2(f(\x_1) - \pi(f))}{(\sum_{n=1}^N \omega(\x_n) / N)(\sum_{n=2}^N \omega(\x_n) / N)} \right].
    \end{align*}
    
    Define
    \begin{align*}
      A_N := \frac{\omega(\x_1)^2(f(\x_1) - \pi(f))}{(\sum_{n=1}^N \omega(\x_n) / N)(\sum_{n=2}^N \omega(\x_n) / N)}.
    \end{align*}
    
    By the strong law of large numbers, $\sum_{n=1}^N \omega(\x_n) / N \xrightarrow{a.s.} 1$ and $\sum_{n=2}^N \omega(\x_n) / N \xrightarrow{a.s.} 1$ as $N \to \infty$, so
    \begin{align*}
      A_N \xrightarrow{a.s.} \omega(\x_1)^2(f(\x_1) - \pi(f)) \quad \text{as } N \to \infty.
    \end{align*}
    If $\{A_N\}_{N \geq 1}$ is uniformly integrable, we may exchange expectation and limit to get
    \begin{align*}
      \lim_{N \to \infty} N\times\mathbb{E}_{\bar{q}}\left[\FN(\bfx) - \pi(f)\right] &= -\mathbb{E}[\lim_{N \to \infty} A_N] = -q\left(\omega^2(f - \pi(f))\right),
    \end{align*}
    which is the claimed result.
    
    It remains to establish uniform integrability of $\{A_N\}_{N \geq 1}$. By \citet{billingsleyconvergence}, Equation (3.18), it suffices to find $\epsilon > 0$ such that
    \begin{align*}
      \sup_N \mathbb{E}[|A_N|^{1+\epsilon}] < \infty.
    \end{align*}
    Since $\omega > 0$ and $\sum_{n=1}^N \omega(\x_n)/N \geq \sum_{n=2}^N \omega(\x_n)/N$ a.s., we can a.s. bound
    \begin{align*}
      |A_N| \leq \frac{|\omega(\x_1)^2(f(\x_1) - \pi(f))|}{(\sum_{n=2}^N \omega(\x_n) / N)^2}.
    \end{align*}
    Using independence of $\x_1$ and $(\x_2, \ldots, \x_N)$,
    \begin{align*}
      \mathbb{E}[|A_N|^{1+\epsilon}] &\leq \mathbb{E}\left[\left|\omega(\x_1)^2(f(\x_1) - \pi(f))\right|^{1+\epsilon}\right] \mathbb{E}\left[\left(\frac{N}{\sum_{n=2}^N \omega(\x_n)}\right)^{2(1+\epsilon)}\right].
    \end{align*}
    
    Write $\hat{Z}_{N\setminus 1} = (N-1)^{-1}\sum_{n=2}^N\omega(\x_n)$ for the sample average of the last $N-1$ weights. Since $N\leq 2(N-1)$ for $N\geq 2$, we have $N^{-1}\sum_{n=2}^N\omega(\x_n) = \frac{N-1}{N}\hat{Z}_{N\setminus 1} \geq \frac{1}{2}\hat{Z}_{N\setminus 1}$, so
    \[\left(\frac{N}{\sum_{n=2}^N\omega(\x_n)}\right)^{2(1+\epsilon)} \leq 2^{2(1+\epsilon)}\,\hat{Z}_{N\setminus 1}^{-2(1+\epsilon)}.\]
    Applying Proposition~\ref{prop:inversemoments} with $r=2(1+\epsilon)$ to the $N-1$ i.i.d.\ variables $\x_2,\ldots,\x_N$, we get $\mathbb{E}[\hat{Z}_{N\setminus 1}^{-2(1+\epsilon)}]\leq q(\omega^{-\eta})^{2(1+\epsilon)/\eta}$ for $N-1\geq 2(1+\epsilon)/\eta$. The first factor in the bound on $\mathbb{E}[|A_N|^{1+\epsilon}]$ is finite by assumption. Hence $\sup_N \mathbb{E}[|A_N|^{1+\epsilon}] < \infty$, and uniform integrability follows by \citet[][(3.18)]{billingsleyconvergence}.
  \end{proof}

  \subsubsection{Proof of Theorem~\ref{thm: snis unbounded convergence}\label{appx:proofs:thm: snis unbounded convergence}}
  
  \begin{proof}[Proof of Theorem~\ref{thm: snis unbounded convergence}]
    
For a sample $\x_1, \ldots, \x_N$ i.i.d. from $q$,
we write their empirical distribution $q^N$ so that, for example, $q^N(\omega):= \sum_{n=1}^N \omega(\x_n)/N$.
    The IS estimator is thus $q^N(f\omega)/q^N(\omega)$.
    First, it is enough to consider the case where the test function
    $f$ is non-negative. Indeed, for a general function $f$ we write $f = f_{+} - f_{-}$ where $f_{+}(x) := \max \{f(x), 0\}$ and $f_{-}(x) := -\min\{f(x), 0\}$. Then 
    \begin{align*}
      \left\lvert \frac{q^N(f\omega)}{q^N(\omega)}  - \pi(f)\right\rvert 
      &= \left\lvert \frac{q^N(f_{+}\omega) - q^N(f_{-}\omega) }{q^N(\omega)}  - (\pi(f_{+}) - \pi(f_{-}))\right\rvert \\
      &\leq \left\lvert \frac{q^N(f_{+}\omega)}{q^N(\omega)}  - \pi(f_+)\right\rvert + \left\lvert \frac{q^N(f_-\omega)}{q^N(\omega)}  - \pi(f_{-})\right\rvert.
    \end{align*}
    Using $(a+b)^s \leq 2^{s-1} (a^s + b^s)$, and applying the result for non-negative functions $f_+$ and $f_-$ separately, we obtain the result for general $f$. We thus assume that $f$ is non-negative.
    
    We write the absolute error between the IS estimator with the target $\pi(f) = q(f\omega)$ in two different ways. The first is: 
    \begin{align}\label{eq:upb_max_plus_qwf}
      \left\lvert \frac{q^N(f\omega)}{q^N(\omega)}  - q(f\omega)\right\rvert \leq \max_{1\leq i \leq N} f(x_i)+q(f\omega).
    \end{align}
    The second is: 
    \begin{align*}
      \left\lvert \frac{q^N(f\omega)}{q^N(\omega)}  - q(f\omega)\right\rvert \leq \left\lvert \frac{q^N(f\omega)}{q^N(\omega)} - \frac{q(f\omega)}{q^N(\omega)}\right\rvert + 
      q(f\omega)\left\lvert \frac{1}{q^N(\omega)} - 1 \right\rvert.
    \end{align*}
    Now we consider two cases: 1) $\lvert q^N(\omega) - 1\rvert > \tfrac{1}{2}$, 2) $\lvert q^N(\omega) - 1\rvert \leq \tfrac{1}{2}$. We will separately bound the expected error under the two cases using the two inequalities above.
    
    We start with the first case, and we assume $r < \infty$. First, we use $(a+b)^s \leq 2^{s-1}(a^s + b^s)$ to write
    \begin{align*}
      \bE\left[ 	\left\lvert \frac{q^N(f\omega)}{q^N(\omega)}  - q(f\omega)\right\rvert^s \mathds{1}(\lvert q^N(\omega) - 1\rvert > \tfrac{1}{2})\right] &\leq 
      \bE\left[ \left(\max_{1\leq i \leq N} f(\x_i)+q(f\omega)\right)^s	\mathds{1}(\lvert q^N(\omega) - 1\rvert > \tfrac{1}{2})\right] \\
      &\leq 2^{s-1}\bE\left[ \left(\max_{1\leq i \leq N} f(\x_i)\right)^s	\mathds{1}(\lvert q^N(\omega) - 1\rvert > \tfrac{1}{2})\right] \break\\
      &+2^{s-1}\left(q(f\omega)\right)^s	\bP(\lvert q^N(\omega) - 1\rvert > \tfrac{1}{2}).
    \end{align*}
    The second term leads to a bound in $N^{-s/2}$ using Markov's inequality as in Proposition~\ref{prop:zhatconcentration}, since $q(\omega^s)<\infty$ under the assumptions.
    The first term is dealt with first using H\"older's inequality with exponents $r/s$ and $(1-s/r)^{-1}$,
    \begin{align*}
      &	\bE\left[ \left(\max_{1\leq i \leq N} f(\x_i)\right)^s	\mathds{1}(\lvert q^N(\omega) - 1\rvert > \tfrac{1}{2})\right]\\
      &\leq \bE\left[ \left(\max_{1\leq i \leq N} f(\x_i)\right)^r \right]^{s/r} \times \bP(\lvert q^N(\omega) - 1\rvert > \tfrac{1}{2})^{1- s/r}\\
      & \leq q(f^r)^{s/r} N^{s/r} \cdot C \cdot N^{-\tfrac{1}{2}p(1- s/r)},
    \end{align*}
    for a constant $C$.
    The last inequality uses the fact that  $\bE[(\max_{1\leq i \leq N} f(\x_i))^r] \leq N \bE[f(\x_1)^r]$, and Markov's inequality using $q(\omega^p)<\infty$.
    Given $s \leq pr/(p+r+2)$, the exponent of $N$ satisfies
    \begin{align*}
      \frac{s}{r} - \frac{p(r-s)}{2r} = \frac{2s + ps - pr}{2r} \leq \frac{-s}{2},
    \end{align*}
    using $s \leq pr/(p+r+2) \Leftrightarrow -pr \leq -s(p+r+2)$. 
    Altogether we arrive at 
    \begin{align*}
      \bE\left[ 	\left\lvert \frac{q^N(f\omega)}{q^N(\omega)}  - q(f\omega)\right\rvert^s \mathds{1}(\lvert q^N(\omega) - 1\rvert > \tfrac{1}{2})\right] \leq C N^{-s/2},
    \end{align*}
    for another constant $C$.
    
    In the case $r = \infty$, we can directly write
    \begin{align*}
      \bE\left[ 	\left\lvert \frac{q^N(f\omega)}{q^N(\omega)}  - q(f\omega)\right\rvert^s \mathds{1}(\lvert q^N(\omega) - 1\rvert > \tfrac{1}{2})\right] &\leq 
      \bE\left[ \left(\max_{1\leq i \leq N} f(\x_i)+q(f\omega)\right)^s	\mathds{1}(\lvert q^N(\omega) - 1\rvert > \tfrac{1}{2})\right] \\
      &\leq 2^s \lvert f \rvert_\infty^s \bP(\lvert q^N(\omega) - 1\rvert > \tfrac{1}{2})\\
      & \leq  2^s C \lvert f \rvert_\infty^s N^{-\tfrac{1}{2}p} \leq C N^{-\tfrac{1}{2}s},
    \end{align*}
    using $s \leq \min\{p, r\} \leq p$ in the last line, and changing the value of $C$ between inequalities.
    
    For the case $|q^N(\omega)  - 1| \leq \tfrac{1}{2}$,
    \begin{align*}
      \left\lvert \frac{q^N(f\omega)}{q^N(\omega)}  - \pi(f)\right\rvert \mathds{1}(\lvert q^N(\omega) - 1\rvert \leq  0.5) &\leq 
      2 \lvert q^N(f\omega) - \pi(f)\rvert + \pi(f)\left\lvert \frac{q^N(\omega) - 1}{q^N(\omega)} \right \rvert \\
      &\leq 2 \lvert q^N(f\omega) - \pi(f)\rvert + 2\pi(f) \left \lvert q^N(\omega) - 1 \right \rvert.
    \end{align*}
    Therefore 
    \begin{align*}
      \bE\left[ 	\left\lvert \frac{q^N(f\omega)}{q^N(\omega)}  - \pi(f)\right\rvert^s \mathds{1}(\lvert q^N(\omega) - 1\rvert < 0.5)\right] &\leq C\left(\bE[\lvert q^N(f\omega) - \pi(f)\rvert^s] + \bE[\left \lvert q^N(\omega) - 1 \right \rvert^s]\right)\\
      &\leq C N^{-s/2},
    \end{align*}
    for some constant $C$ that changes at each line.
    The first term is $O(N^{-s/2})$ with a reasoning similar to
    that in the proof of Proposition~\ref{prop:zhatconcentration},
    since $q^N(f\omega)$ is the sum  of  $N$ i.i.d. random variables with mean $q(f\omega)$ and $s$ finite moments, since $s\leq pr/(p+r+2) \leq pr/p+r$.
    Putting everything together gives
    \begin{align*}
      \bE\left[ 	\left\lvert \frac{q^N(f\omega)}{q^N(\omega)}  - \pi(f)\right\rvert^s\right] \leq C N^{-s/2}.
    \end{align*}
  \end{proof}

  \subsubsection{\label{appx:proof_sketch:snis_moments_p_less_than_2}Proof of Proposition~\ref{prop:snis_moments_p_less_than_2}}

\begin{proof}[Proof of Proposition~\ref{prop:snis_moments_p_less_than_2}]
  Set $\alpha := pr/(p+r)$, so that $1/\alpha = 1/p+1/r$ and
  $1<\alpha\leq 2$ under the assumptions of
  Proposition~\ref{prop:snis_moments_p_less_than_2}. By H\"older's
  inequality with conjugate exponents $p/\alpha$ and $r/\alpha$,
  \begin{align}\label{eq:hv:fomega-bound}
    q(|f\omega|^{\alpha}) \leq \left[q(\omega^{p})\right]^{\alpha/p}\,\left[q(|f|^{r})\right]^{\alpha/r} <\infty,
  \end{align}
  so the summands $\omega(\x_n)f(\x_n)$ have a finite absolute moment of
  order $\alpha$. The assumption $\alpha>1$, equivalent to $1/p+1/r<1$,
  is exactly the integrability threshold beyond which the von
  Bahr--Esseen inequality of Lemma~\ref{lem:vbe_less2} sharpens the
  trivial $\mathcal{O}(1)$ bound on centred sums into the decaying rate
  $N^{-(\alpha-1)/\alpha}$.

  Proposition~\ref{prop:zhatconcentration_less2} controls the centred
  sum of weights:
  \begin{align}\label{eq:hv:Z-bound}
    \bE\!\left[\,\left|q^N(\omega) - 1\right|^{p}\,\right]
    \leq \frac{C}{N^{p-1}},
    \qquad
    \bP\!\left(\,\left|q^N(\omega) - 1\right| > \tfrac{1}{2}\,\right)
    \leq \frac{C}{N^{p-1}}.
  \end{align}
  For the centred numerator, set $Y_n := \omega(\x_n)f(\x_n) -
  q(f\omega)$ and apply Lemma~\ref{lem:vbe_less2} at order $\alpha$ to
  $\sum_n Y_n$, then Lyapunov's inequality at $1\leq s\leq \alpha$:
  \begin{align}\label{eq:hv:num-bound-s}
    \bE\!\left[\,\left|q^N(f\omega) - q(f\omega)\right|^{s}\,\right]
    \leq C\, N^{-s(\alpha-1)/\alpha}.
  \end{align}
  Specialising to $f\equiv 1$ gives the analogous bound
  \begin{align}\label{eq:hv:Z-bound-s}
    \bE\!\left[\,\left|q^N(\omega) - 1\right|^{s}\,\right]
    \leq C\,N^{-s(p-1)/p},
    \qquad 1\leq s\leq p.
  \end{align}

  From here the argument follows that of
  the proof of Theorem~\ref{thm: snis unbounded convergence}, the only change being the $L^s$-bounds in $N^{-1/2}$ being replaced by the above, slower-in-$N$ rates.
  \end{proof}

  \subsection{Proofs of Section~\ref{sec:crncoupling}\label{appx:proofs:crncoupling}}
  
  We assume that both target and proposal distributions admit densities with respect to a measure $\lambda$. Although we will express all subsequent notations using integration, this should be interpreted as summation when the space is discrete and $\lambda$ represents the counting measure.
  The rejection probability at $x$ is denoted by  
\[r(x) =  \int_{x^\star \neq x} \left(1- \min\left(1,\frac{\hat{Z}(x^\star)}{\hat{Z}(x)}\right)\bar{q}(z)\right)\lambda(\diff x^\star).\]
  That definition only considers the probability of moves to states different than $x$ that are rejected. We will use the following fact: at every iteration, for each chain one of the following three events occurs: 1) a proposal to a different state is accepted, 2) a proposal to a different state is rejected, 3) a proposal is made to the current state (and systematically accepted). In a continuous state space with an atomless measure $\lambda$, the last event occurs with probability zero.
  We assume Assumption~\ref{asm:abscontinuitypositivity} throughout so that $\bP(\hat{Z}(x) = 0) = 0$
  under $q$.
  
  We first prove a lemma that describes the coupling time $\tau$.
  
  \begin{lem}\label{lem:couplingtime-facts}
    Assuming $\omega(x) \geq \omega(y)$, we have the following facts: 
    \begin{itemize}
      \item Let $\tau_0$ be the first time when the $X$-chain moves to a different state. Then  $\tau \leq \tau_0$, i.e. the chains meet at $\tau_0$ or earlier.
      \item	Let $\tau_1$ be the first time when a common proposal is $x$. Then  $\tau \leq \tau_1$, i.e. the chains meet at $\tau_1$ or earlier.
      \item The meeting time satisfies $\tau = \min\{\tau_0,\tau_1\}$.
    \end{itemize}
  \end{lem}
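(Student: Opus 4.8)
The plan is to track the two chains produced by Algorithm~\ref{alg:coupledIMH} iteration by iteration, using its two defining features: at every step both chains are fed the \emph{same} proposal $\bfx^\star$ and the \emph{same} uniform $u$, and the coupling is faithful, so that once $\bfx_t=\bfy_t$ the chains receive identical updates forever and $\tau$ is a true first-meeting time. Throughout I assume $\bfx\neq\bfy$ — the case $\bfx=\bfy$ being trivial since the chains then coincide from the start — and I read the hypothesis of the lemma as $\hat{Z}(\bfx)\geq\hat{Z}(\bfy)$, with $\hat{Z}$ playing the role of the weight (this reduces to $\omega(x_1)\geq\omega(y_1)$ when $N=1$). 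The third bullet implies the first two, so I will aim directly at the identity $\tau=\min\{\tau_0,\tau_1\}$, proving the two inequalities $\tau\leq\min\{\tau_0,\tau_1\}$ and $\tau\geq\min\{\tau_0,\tau_1\}$ separately.

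The main tool is a monotonicity claim: for every $t$ with $0\leq t<\min\{\tau,\tau_0\}$ one has $\hat{Z}(\bfy_t)\leq\hat{Z}(\bfx_t)=\hat{Z}(\bfx)$. This is proved by induction on $t$; the base case $t=0$ is the hypothesis. For the step, assume the bound at $t$ and take $t+1<\min\{\tau,\tau_0\}$. Since $t+1<\tau_0$ the $\bfx$-chain is still at $\bfx$. At iteration $t+1$ the $\bfy$-chain either keeps its state, in which case the bound is inherited, or accepts the common proposal $\bfx^\star$: if $\bfx^\star=\bfx$ the chains would meet, contradicting $t+1<\tau$; if $\bfx^\star\neq\bfx$ then the $\bfx$-chain must have rejected it (otherwise it would have moved and $t+1\geq\tau_0$), i.e.\ $u\geq\hat{Z}(\bfx^\star)/\hat{Z}(\bfx)$, and since $u<1$ this forces $\hat{Z}(\bfx^\star)<\hat{Z}(\bfx)$, so $\hat{Z}(\bfy_{t+1})=\hat{Z}(\bfx^\star)$ still satisfies the bound.

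From this, $\tau\leq\tau_0$ and $\tau\leq\tau_1$ follow quickly. If the chains have not met before $\tau_0$, then at iteration $\tau_0$ the $\bfx$-chain accepts a proposal $\bfx^\star\neq\bfx$, so $u<\hat{Z}(\bfx^\star)/\hat{Z}(\bfx)\leq\hat{Z}(\bfx^\star)/\hat{Z}(\bfy_{\tau_0-1})$ by the monotonicity claim, hence the $\bfy$-chain accepts the same $\bfx^\star$ and $\bfx_{\tau_0}=\bfy_{\tau_0}$; thus $\tau\leq\tau_0$. For $\tau\leq\tau_1$: if $\tau_1\geq\tau_0$ we are done by the previous point; if $\tau_1<\tau_0$, then at iteration $\tau_1$ the common proposal equals $\bfx$, the $\bfx$-chain accepts it as a proposal to its own current state, and the $\bfy$-chain accepts it too since $\hat{Z}(\bfx)/\hat{Z}(\bfy_{\tau_1-1})\geq1>u$ by the monotonicity claim, so $\bfx_{\tau_1}=\bfy_{\tau_1}=\bfx$ and $\tau\leq\tau_1$. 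Combining, $\tau\leq\min\{\tau_0,\tau_1\}$.

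For the reverse inequality, suppose $\tau<\min\{\tau_0,\tau_1\}$. Then $\bfx_\tau=\bfx$ because $\tau<\tau_0$, so $\bfy_\tau=\bfx$ as well. The $\bfy$-chain reached $\bfx$ at iteration $\tau$ either by accepting the common proposal $\bfx^\star_\tau=\bfx$, which would give $\tau\geq\tau_1$, a contradiction, or by not moving, so $\bfy_{\tau-1}=\bfx=\bfx_{\tau-1}$; this last case contradicts the minimality of $\tau$ when $\tau\geq2$, and contradicts $\bfx\neq\bfy$ when $\tau=1$. Hence $\tau\geq\min\{\tau_0,\tau_1\}$ and therefore $\tau=\min\{\tau_0,\tau_1\}$, which also yields the first two bullets. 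I expect the main obstacle to be the combinatorial bookkeeping in the discrete state-space setting, where ``a proposal equal to the current state'' (systematically accepted) has positive probability: one must consistently distinguish ``the chain moves'' from ``the chain moves to a \emph{different} state'', keep the monotonicity induction at the level of the weak inequality, and carefully dispose of the boundary cases $\tau=1$ and of proposals landing exactly on $\bfx$. None of the individual steps is deep, but the case analysis must be airtight for the final identity to hold.
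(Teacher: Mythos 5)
Your proof is correct and follows essentially the same strategy as the paper's: establish the monotonicity of the coupled weights ($\hat{Z}(\bfy_t)\leq\hat{Z}(\bfx_t)$) by induction, use it to deduce $\tau\leq\tau_0$ and $\tau\leq\tau_1$, and then show the reverse inequality $\tau\geq\min\{\tau_0,\tau_1\}$ by arguing that before $\min\{\tau_0,\tau_1\}$ the $\bfx$-chain is stuck at $\bfx$ while the $\bfy$-chain cannot reach $\bfx$. The paper's own proof is considerably terser — it simply notes the monotonicity fact and sketches the same reasoning — so your version is a fully written-out expansion of the same argument, with the appropriate care for discrete spaces and the boundary case $\tau=1$ made explicit.
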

  \begin{proof}[Proof of Lemma \ref{lem:couplingtime-facts}]
    The first two observations can be proven by induction, once we recognize that the common draws coupling of Algorithm~\ref{alg:coupledIMH} implies $\omega(X_t) \geq \omega(Y_t)$ for all  $t\geq 0$. Regarding the last observation, for every $t < \min\{\tau_0,\tau_1\}$, the $X$-chain must have rejected moves to a different state than $x$ at each iteration up to $t$. In that situation, the $X$-chain is still at $x$, while the $Y$-chain never proposed a move to $x$ and thus $X_t = x \neq Y_t$, as claimed.
  \end{proof}
  
  Now we calculate the tail probability of $\tau$.
  \begin{lem}\label{lem:coupling-upperbound}
    For all $t\geq 1$, $\displaystyle |P^t(x,\cdot) - P^t(y,\cdot)|_{\text{TV}}  \leq \mathbb{P}_{x,y}(\tau > t) = \max(r(x),r(y))^t.$
  \end{lem}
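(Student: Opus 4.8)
The plan is to prove Lemma~\ref{lem:coupling-upperbound} by combining the coupling inequality with an exact computation of $\mathbb{P}_{\bfx,\bfy}(\tau>t)$, using the structural facts about $\tau$ established in Lemma~\ref{lem:couplingtime-facts}. Without loss of generality assume $\hat Z(\bfx)\geq \hat Z(\bfy)$ (equivalently $\omega(\bfx)\geq\omega(\bfy)$ in the notation of that lemma; the other case is symmetric and only swaps the roles of $r(\bfx)$ and $r(\bfy)$, hence the $\max$). The coupling inequality gives $|P^t(\bfx,\cdot)-P^t(\bfy,\cdot)|_{\text{TV}}\leq \mathbb{P}_{\bfx,\bfy}(\tau>t)$ for free, so the real content is the equality $\mathbb{P}_{\bfx,\bfy}(\tau>t)=r(\bfx)^t$.

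First I would recall from Lemma~\ref{lem:couplingtime-facts} that $\tau=\min\{\tau_0,\tau_1\}$, where $\tau_0$ is the first time the $\bfx$-chain leaves state $\bfx$ and $\tau_1$ is the first time a common proposal $\bfx^\star$ equals $\bfx$. The event $\{\tau>t\}$ is therefore the event that at every iteration $s\in\{1,\dots,t\}$ the common proposal $\bfx^\star_s$ is not equal to $\bfx$, \emph{and} the $\bfx$-chain rejected it (if it had been accepted the chains would have met, since the $\bfx$-chain has the larger weight and the accept decision uses a single shared uniform). These events are i.i.d.\ across iterations because the proposals $\bfx^\star_s\sim\bar q$ and uniforms $u_s$ are drawn afresh and independently each step, and because the $\bfx$-chain stays at $\bfx$ on the event considered (so the acceptance ratio $\alpha_{\text{RH}}(\bfx,\bfx^\star_s)$ is the same deterministic function at every step). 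Hence $\mathbb{P}_{\bfx,\bfy}(\tau>t)=\big(\mathbb{P}(\bfx^\star\neq\bfx,\ u\geq \alpha_{\text{RH}}(\bfx,\bfx^\star))\big)^t$, and the single-step probability is exactly $\int_{\bfx^\star\neq\bfx}(1-\alpha_{\text{RH}}(\bfx,\bfx^\star))\,\bar q(\diff\bfx^\star)=r(\bfx)$ by the definition in \eqref{eq:pimh:reject}. Under $\hat Z(\bfx)\geq\hat Z(\bfy)$ we have $r(\bfx)\geq r(\bfy)$, so $r(\bfx)=\max(r(\bfx),r(\bfy))$, giving the claimed identity.

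The step I expect to need the most care is the clean justification that $\{\tau>t\}$ decomposes as a product of $t$ identical independent single-step events — in particular handling the atom at $\bfx$ (the ``propose the current state'' event, which is only non-null on discrete spaces) and confirming that on $\{\tau>s\}$ the $\bfx$-chain is genuinely still at $\bfx$, so that the per-step rejection probability is literally $r(\bfx)$ and not something depending on the current $\bfx$-state. This is exactly what Lemma~\ref{lem:couplingtime-facts} is designed to supply: $\tau\leq\tau_0$ means that up to the meeting time the $\bfx$-chain has not moved, and $\tau=\min\{\tau_0,\tau_1\}$ pins down that the only way to have $\tau>s$ is to avoid both ``$\bfx$-chain moves'' and ``common proposal hits $\bfx$.'' I would write this as: on $\{\tau>s-1\}$, $\bfx_{s-1}=\bfx$ and $\bfy_{s-1}$ has never equalled $\bfx$, so $\{\tau>s\}\cap\{\tau>s-1\}=\{\tau>s-1\}\cap\{\bfx^\star_s\neq\bfx\}\cap\{u_s\geq\alpha_{\text{RH}}(\bfx,\bfx^\star_s)\}$, and the last two events are independent of $\mathcal F_{s-1}$ with probability $r(\bfx)$; then iterate. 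The coupling inequality is invoked only at the very end to upgrade the exact tail formula to the stated bound on total variation distance.
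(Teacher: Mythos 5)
Your proof is correct and takes essentially the same route as the paper: invoke the coupling inequality for the $\leq$, then use Lemma~\ref{lem:couplingtime-facts} to rewrite $\{\tau>t\}$ as the event that at each of the first $t$ steps the common proposal is not $\bfx$ and the $\bfx$-chain rejects it, giving $\mathbb{P}_{\bfx,\bfy}(\tau>t)=r(\bfx)^t$ under the WLOG ordering $\hat Z(\bfx)\geq\hat Z(\bfy)$. The only difference is that you make explicit the induction/independence argument showing the $t$ single-step events are i.i.d.\ with probability $r(\bfx)$, a step the paper's proof states but does not spell out.
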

  \begin{proof}[Proof of Lemma \ref{lem:coupling-upperbound}]
    The inequality in the statement is the celebrated coupling inequality. For the equality,
    we assume $\omega(x) \geq \omega(y)$ without loss of generality, which implies $r(x) \geq r(y)$. By Lemma \ref{lem:couplingtime-facts}, the event $\{\tau > t\}$ is equivalent to $\{\min\{\tau_0,\tau_1\} > t\}$. The latter event corresponds to the event: ``the $X$-chain proposes to move to a different state but gets rejected at each of the first $t$ iterations''.
    Then its probability is $r(x)^t$, since $r(x)$ is the probability of a failed attempt to move to a different state.
  \end{proof}
  
  It remains to show the following lower bound. 
  \begin{lem}\label{lem:coupling-lowerbound}
    For all $t\geq 1$, $\displaystyle |P^t(x,\cdot) - P^t(y,\cdot)|_{\text{TV}} \geq \mathbb{P}_{x,y}(\tau > t)$.
  \end{lem}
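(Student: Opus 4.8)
The plan is to produce a single measurable set on which $P^t(\bfx,\cdot)$ and $P^t(\bfy,\cdot)$ differ by exactly $\mathbb{P}_{\bfx,\bfy}(\tau>t)$; since $|P^t(\bfx,\cdot)-P^t(\bfy,\cdot)|_{\text{TV}}=\sup_A\{P^t(\bfx,A)-P^t(\bfy,A)\}$, this gives the lower bound, and together with Lemma~\ref{lem:coupling-upperbound} it yields the equality asserted in Theorem~\ref{thm:couplingIMH}. Without loss of generality assume $\hat{Z}(\bfx)\geq\hat{Z}(\bfy)$ (otherwise swap the roles of $\bfx$ and $\bfy$; the claim and the law of $\tau$ are symmetric in the two chains). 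The test set will be the singleton $\{\bfx\}$, and from Lemma~\ref{lem:couplingtime-facts} I would extract two structural facts. First, on $\{\tau>t\}=\{\min\{\tau_0,\tau_1\}>t\}$ the $\bfx$-chain has not moved away from $\bfx$ during the first $t$ steps, so $\bfx_t=\bfx$; moreover no common proposal in those steps equals $\bfx$, and since the $\bfy$-chain starts at $\bfy\neq\bfx$ and its state at time $t$ lies in $\{\bfy\}\cup\{\bfx^{\star}_1,\dots,\bfx^{\star}_t\}$, we get $\bfy_t\neq\bfx$. Second, on $\{\tau\leq t\}$ the chains have already met and evolve identically thereafter, so $\bfx_t=\bfy_t$.

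Given these facts I would compute, working with the coupled chain $(\bfx_t,\bfy_t)$ started from $(\bfx,\bfy)$ and splitting according to whether $\tau>t$:
\begin{align*}
  P^t(\bfx,\{\bfx\}) &= \mathbb{P}_{\bfx,\bfy}(\bfx_t=\bfx,\,\tau>t) + \mathbb{P}_{\bfx,\bfy}(\bfx_t=\bfx,\,\tau\leq t) = \mathbb{P}_{\bfx,\bfy}(\tau>t) + \mathbb{P}_{\bfx,\bfy}(\bfx_t=\bfx,\,\tau\leq t),\\
  P^t(\bfy,\{\bfx\}) &= \mathbb{P}_{\bfx,\bfy}(\bfy_t=\bfx,\,\tau>t) + \mathbb{P}_{\bfx,\bfy}(\bfy_t=\bfx,\,\tau\leq t) = \mathbb{P}_{\bfx,\bfy}(\bfy_t=\bfx,\,\tau\leq t),
\end{align*}
using $\bfx_t=\bfx$ on $\{\tau>t\}$ in the first line and $\bfy_t\neq\bfx$ on $\{\tau>t\}$ in the second. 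On $\{\tau\leq t\}$ the identity $\bfx_t=\bfy_t$ forces $\{\bfx_t=\bfx\}\cap\{\tau\leq t\}=\{\bfy_t=\bfx\}\cap\{\tau\leq t\}$, so the two remainder terms coincide and cancel upon subtraction. Hence $P^t(\bfx,\{\bfx\})-P^t(\bfy,\{\bfx\})=\mathbb{P}_{\bfx,\bfy}(\tau>t)$, and taking $A=\{\bfx\}$ in the supremum defining the total variation distance gives the claim.

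The only point that needs genuine care is the justification of the structural facts on $\{\tau>t\}$, in particular that $\bfy_t\neq\bfx$; this is exactly where a naive argument would split into the continuous case (where one simply notes that proposals hit $\bfx$ with probability zero) and the discrete case. Here the distinction dissolves because Lemma~\ref{lem:couplingtime-facts} already records, through $\tau=\min\{\tau_0,\tau_1\}$, that on $\{\tau>t\}$ no proposal among the first $t$ steps equals $\bfx$, while the $\bfy$-chain can reach $\bfx$ only by accepting such a proposal. Combining the resulting equality with Lemma~\ref{lem:coupling-upperbound} completes the proof of Theorem~\ref{thm:couplingIMH}, namely $|P^t(\bfx,\cdot)-P^t(\bfy,\cdot)|_{\text{TV}}=\mathbb{P}_{\bfx,\bfy}(\tau>t)=\max(r(\bfx),r(\bfy))^t$.
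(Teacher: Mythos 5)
Your argument is correct and matches the paper's proof essentially line for line: same choice of test set $\{\bfx\}$, same decomposition of each marginal probability on the events $\{\tau>t\}$ and $\{\tau\leq t\}$, same appeal to Lemma~\ref{lem:couplingtime-facts} to justify $\bfx_t=\bfx$ and $\bfy_t\neq\bfx$ on $\{\tau>t\}$, and same cancellation of the $\{\tau\leq t\}$ terms via $\bfx_t=\bfy_t$. Your remark explaining why the discrete/continuous distinction does not matter — because $\tau\leq\tau_1$ ensures no proposal hits $\bfx$ in the first $t$ steps — is exactly the reasoning the paper uses, just stated more explicitly.
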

  \begin{proof}[Proof of Lemma \ref{lem:coupling-lowerbound}]
    Again, we assume $\omega(x) \geq \omega(y)$ without loss of generality. The definition of total variation distance as a supremum over measurable sets implies $\displaystyle |P^t(x,\cdot) - P^t(y,\cdot)|_{\text{TV}} \geq P^t(x,\{x\}) - P^t(y,\{x\})$, considering the set $\{x\}$.
    
    Under the distribution of the coupled chains, we can write $P^t(x,\{x\}) - P^t(y,\{x\})$ as $\bP(X_t = x) - \bP(Y_t = x)$. Now we decompose each probability according to $\tau$ being greater or less than $t$, for any $t\geq 1$:
    \begin{align*}
      \bP\left(X_t = x\right) - \bP\left(Y_t =  x\right) = &\; \bP\left(X_t = x; \tau >  t\right) +   \bP\left(X_t = x; \tau \leq t\right) \\
      &- \bP\left(Y_t = x; \tau > t\right) -  \bP\left(Y_t = x; \tau \leq t\right).
    \end{align*}
    We simplify with the following observations.
    \begin{itemize}
      \item Under the event $\tau > t$: we have $X_t = x$; otherwise, the $X$-chain would have successfully moved to a new state jointly with the $Y$-chain implying $\tau \leq t$ by Lemma \ref{lem:couplingtime-facts}. Therefore, 
      \begin{align*}
        \bP\left(X_t = x; \tau >  t\right) = \bP\left(\tau > t\right) \bP\left(X_t = x \mid \tau>t\right) =  \bP\left(\tau > t\right).
      \end{align*} 
      Meanwhile, under that event we have $Y_t \neq x$; otherwise, the $Y$-chain must have proposed a move to $x$ at or before time $t$, and that would have resulted in a meeting by Lemma \ref{lem:couplingtime-facts}. Therefore, 
      \begin{align*}
        \bP\left(Y_t = x; \tau >  t\right) = 0.
      \end{align*}
      \item 	Under the event  $\tau \leq t$: we have $X_t = Y_t$, therefore  $ \bP(X_t = x; \tau \leq t) =  \bP(Y_t = x; \tau \leq t)$. 
    \end{itemize}
    Putting these together, we conclude that $\bP\left(X_t = x\right) - \bP\left(Y_t = x\right) = \bP\left(\tau > t\right)$. 
  \end{proof}
  Theorem \ref{thm:couplingIMH} is obtained by combining Lemmas \ref{lem:coupling-upperbound} and \ref{lem:coupling-lowerbound}.

  \subsection{Proofs of Section~\ref{sec:imh}\label{appx:proofs:imh}}
  
  \subsubsection{Proof of Proposition~\ref{prop:meetingtimes_rejproba}\label{appx:proofs:prop:meetingtimes_rejproba}}
  
  Let $t\geq 1$. The event $\{\tau > t\}$ only occurs when Algorithm~\ref{alg:coupledPIMHlag} enters its while loop, in which case we must have that 1) $\bfx_1=\bfx_0$, 2) $\hat{Z}(\bfx_0)>\hat{Z}(\bfy_0)$, and 3) the first generated Uniform variable was greater than $\hat{Z}(\bfy_0)/\hat{Z}(\bfx_0)$. Thus,
  \begin{align}\label{eq:taugreaterthantinwhileloop}
    \mathbb{P}(\tau > t) = \iint &\mathbb{P}_{\bfxsmall_0,\bfysmall_0}\left(\tau > t\right) \left(1-\min\left\{1,\frac{\hat{Z}(\bfysmall_0)}{\hat{Z}(\bfxsmall_0)}\right\}\right) \\
    & \times \mathds{1}\left(\hat{Z}(\bfysmall_0)<\hat{Z}(\bfxsmall_0)\right) \mathds{1}(\bfysmall_0 \neq \bfxsmall_0)\bar{q}(\diff \bfxsmall_0)\bar{q}(\diff\bfysmall_0).
  \end{align}
  The quantity $\mathbb{P}_{\bfxsmall_0,\bfysmall_0}(\tau > t)$ in the event $\hat{Z}(\bfysmall_0)<\hat{Z}(\bfxsmall_0)$ is equal to $r(\bfxsmall_0)^t$, as in Theorem~\ref{thm:couplingIMH}.
  By upper-bounding the other terms by one and integrating with respect to $\bar{q}(\mathrm{d}\bfysmall_0)$, we obtain the upper bound
  \begin{equation}\label{eq:taugreaterthant:step2}
    \mathbb{P}(\tau > t) \leq \int (r(\bfxsmall_0))^t \bar{q}(\mathrm{d}\bfxsmall_0) = \mathbb{E}_{\bar{q}}\left[(r(\bfx_0))^t\right].
  \end{equation}

  \subsubsection{Proof of Proposition~\ref{prop:expected_rejection_probability_bound}\label{appx:proofs:prop:expected_rejection_probability_bound}}
  We prove Proposition~\ref{prop:expected_rejection_probability_bound} by first splitting the expectation according to whether $\ZN(\bfx)$ is less than or greater than 2:
  \[
    \mathbb{E}_{\bar{q}}\left[r(\bfx)^t\right] = \mathbb{E}_{\bar{q}}\left[r(\bfx)^t \mathds{1}(\ZN(\bfx) \leq 2) \right] + 
    \mathbb{E}_{\bar{q}}\left[r(\bfx)^t\mathds{1}(\ZN(\bfx) > 2)\right].\]
  
  We then proceed through a series of lemmas to bound each term. The following lemmas are used to handle the case where $\ZN(\bfx) > 2$. 
 
  \begin{lem}\label{lem:rupb}
    Under Assumption~\ref{asm:abscontinuitypositivity} and $q(\omega^p)<\infty$ for any $p>1$, the rejection probability~\eqref{eq:pimh:reject} is upper bounded as follows, for any $\theta\in[0,1]$ and any $\bfxsmall\in\mathbb{X}^N$:
    \begin{align}\label{eqn:rejection_upper_bound}
      r(\bfxsmall) \leq 1 - \min\left\{1,\frac{\theta}{\ZN(\bfxsmall)}\right\} c_p(\theta),\quad \text{with}\quad c_p(\theta) = \frac{(1-\theta)^{p/(p-1)}}{q(\omega^p)^{1/(p-1)}}\in[0,1].
    \end{align}
  \end{lem}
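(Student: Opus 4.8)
The plan is to lower-bound the acceptance probability $1-r(\bfx)$ by an anticoncentration argument on $\hat{Z}(\bfx^{\star})$ with $\bfx^{\star}\sim\bar{q}$. First I would rewrite $r(\bfx)$ without the restriction $\bfx^{\star}\neq\bfx$: since
\[
1-r(\bfx) = \bar{q}(\{\bfx\}) + \int_{\bfx^{\star}\neq\bfx}\alpha_{\text{RH}}(\bfx,\bfx^{\star})\,\bar{q}(\diff\bfx^{\star})
\quad\text{and}\quad
\bar{q}(\{\bfx\}) = \bar{q}(\{\bfx\})\min\Bigl\{1,\tfrac{\hat{Z}(\bfx)}{\hat{Z}(\bfx)}\Bigr\},
\]
the atom of $\bar{q}$ at $\bfx$ is absorbed into the integral and
\[
1-r(\bfx) = \int_{\mathbb{X}^N}\min\Bigl\{1,\frac{\hat{Z}(\bfx^{\star})}{\hat{Z}(\bfx)}\Bigr\}\,\bar{q}(\diff\bfx^{\star}).
\]
On the event $\{\hat{Z}(\bfx^{\star})\geq\theta\}$ the integrand is at least $\min\{1,\theta/\hat{Z}(\bfx)\}$, by monotonicity of $u\mapsto\min\{1,u/\hat{Z}(\bfx)\}$, so
\[
1-r(\bfx)\;\geq\;\min\Bigl\{1,\frac{\theta}{\hat{Z}(\bfx)}\Bigr\}\,\bP_{\bar{q}}\bigl(\hat{Z}(\bfx^{\star})\geq\theta\bigr).
\]

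The heart of the proof is then a lower bound on $\bP_{\bar{q}}(\hat{Z}(\bfx^{\star})\geq\theta)$ via a generalized Paley--Zygmund inequality. Writing $\hat{Z}$ for $\hat{Z}(\bfx^{\star})$, I would split the first moment at level $\theta$ and apply H\"older's inequality with exponents $p$ and $p/(p-1)$:
\[
1 = \bE_{\bar{q}}[\hat{Z}] \leq \theta + \bE_{\bar{q}}\bigl[\hat{Z}\,\mathds{1}(\hat{Z}\geq\theta)\bigr] \leq \theta + \bE_{\bar{q}}[\hat{Z}^{p}]^{1/p}\,\bP_{\bar{q}}(\hat{Z}\geq\theta)^{(p-1)/p},
\]
which rearranges to $\bP_{\bar{q}}(\hat{Z}\geq\theta)\geq (1-\theta)^{p/(p-1)}\big/\bE_{\bar{q}}[\hat{Z}^{p}]^{1/(p-1)}$. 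Since $\hat{Z}(\bfx^{\star})=N^{-1}\sum_{n=1}^{N}\omega(x^{\star}_{n})$ is an average and $t\mapsto t^{p}$ is convex for $p>1$, Jensen's inequality gives $\bE_{\bar{q}}[\hat{Z}^{p}]\leq N^{-1}\sum_{n}\bE[\omega(x^{\star}_{n})^{p}]=q(\omega^{p})$. Hence $\bP_{\bar{q}}(\hat{Z}(\bfx^{\star})\geq\theta)\geq c_p(\theta)$, and substituting into the previous display yields the claimed inequality for $r(\bfx)$.

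Finally I would check $c_p(\theta)\in[0,1]$: nonnegativity is immediate, and the chain of inequalities above shows $c_p(\theta)\leq\bP_{\bar{q}}(\hat{Z}(\bfx^{\star})\geq\theta)\leq 1$ (equivalently, $q(\omega^{p})\geq q(\omega)^{p}=1$ by Jensen while $(1-\theta)^{p/(p-1)}\leq 1$). I do not expect a real obstacle here; the only steps needing some care are the bookkeeping of the atom of $\bar{q}$ at $\bfx$ in the first step, so that the bound is valid on discrete as well as continuous state spaces, and keeping the conjugate exponents $p$ and $p/(p-1)$ consistent in the Paley--Zygmund step.
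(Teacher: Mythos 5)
Your proof is correct and follows essentially the same route as the paper's: a Paley--Zygmund-type lower bound on $\mathbb{P}_{\bar{q}}(\hat{Z}(\bfx^\star)\geq\theta)$, obtained by splitting $\mathbb{E}[\hat{Z}]$ at the threshold $\theta$ and applying H\"older's inequality with conjugate exponents $p$ and $p/(p-1)$, combined with Jensen's inequality to reduce $\mathbb{E}_{\bar{q}}[\hat{Z}^p]$ to $q(\omega^p)$, and then restricting the acceptance integral to the high-$\hat{Z}$ region. The only substantive addition is that you make explicit the bookkeeping showing $1-r(\bfx)=\int\min\{1,\hat{Z}(\bfx^\star)/\hat{Z}(\bfx)\}\bar{q}(\diff\bfx^\star)$ by folding the atom at $\bfx$ back into the integral; the paper's proof uses this identity implicitly when it declares the final display ``yields the desired result.''
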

  
  \begin{proof}
    Let $\theta\in[0,1]$. We start with a $L^p$-version of the Paley-Zygmund inequality, as on page 2705, equation (12) of \cite{petrov2007lower} with $r = 1$.
    If $W$ is a non-negative random variable and $p > 1$, then 
    \begin{align}\label{eqn:P-Z inequality}
      \bP\left(W > \theta \bE[W]\right) \geq \frac{(1-\theta)^{p/(p-1)}\left(\bE[W]\right)^{p/(p-1)}}{\left(\bE[W^p]\right)^{1/(p-1)}}.
    \end{align}
    Indeed, for any $b > 0$,  H{\"o}lder's inequality implies
    \begin{align*}
      \bE[W] &= \bE[W \mathds{1}(W> b)] + \bE[W \mathds{1}(W\leq b)]\\
      &\leq \bP(W> b)^{(1 - 1/p)} \bE[W^p]^{1/p} + b.
    \end{align*}
    Rearranging with $b = \theta \bE[W]$ implies \eqref{eqn:P-Z inequality}. We apply this to $\ZN(\bfx)$, under Assumption~\ref{asm:abscontinuitypositivity}:
    \begin{align}\label{eqn:P-Z inequality for weight}
      \bP_{\bar{q}}\left(\ZN(\bfx) > \theta\right) \geq \frac{(1-\theta)^{p/(p-1)}}{\left(\bE_{\bar{q}}\left[\left(\ZN(\bfx)\right)^p\right]\right)^{1/(p-1)}} \geq \frac{(1-\theta)^{p/(p-1)}}{q(\omega^p)^{1/(p-1)}}.
    \end{align}
    The latter inequality comes from Jensen's, since $z\mapsto z^p$ is convex since $p>1$:
      \[
      \mathbb{E}_{\bar{q}}\left[\left(\ZN(\bfx)\right)^p\right]\leq \mathbb{E}_{\bar{q}}\left[\frac{1}{N}\sum_{n=1}^N \omega(\x_n)^p\right] = q(\omega^p).
      \]
    Inequality \eqref{eqn:P-Z inequality for weight} implies that
    \begin{align*}
      \displaystyle
      \int \min\left\{1,\frac{\ZN(\bfxsmallstar)}{\ZN(\bfxsmall)}\right\} \bar{q}(\mathrm{d}\bfxsmallstar) 
      &= \int_{\{\bfxsmallstar:\ZN(\bfxsmallstar)\leq\theta\}} \min\left\{1,\frac{\ZN(\bfxsmallstar)}{\ZN(\bfxsmall)}\right\} \bar{q}(\mathrm{d}\bfxsmallstar) \\
      &\quad+ \int_{\{\bfxsmallstar:\ZN(\bfxsmallstar)>\theta\}} \min\left\{1,\frac{\ZN(\bfxsmallstar)}{\ZN(\bfxsmall)}\right\} \bar{q}(\mathrm{d}\bfxsmallstar)\\
      &\geq 0 + \min\left\{1,\frac{\theta}{\ZN(\bfxsmall)}\right\}\mathbb{P}_{\bar{q}}\left(\ZN(\bfxsmallstar)>\theta\right)\\
      &\geq \min\left\{1,\frac{\theta}{\ZN(\bfxsmall)}\right\} \frac{(1-\theta)^{p/(p-1)}}{q(\omega^p)^{1/(p-1)}}.
    \end{align*}
    This yields the desired result.
  \end{proof}
  
  \begin{lem}\label{lem:large-Z-bound}
    Under Assumptions~\ref{asm:abscontinuitypositivity}-\ref{asm:pfinitemoments} with $p\geq 2$, there exists a constant $C > 0$ such that for all $t\geq 1$, $N\geq 1$,
    \begin{equation*}
      \mathbb{E}_{\bar{q}}\left[r(\bfx)^t\mathds{1}(\ZN(\bfx)> 2)\right] \leq \frac{C}{N^{p/2}t^{p}}.
    \end{equation*}
  \end{lem}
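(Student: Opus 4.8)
The plan is to combine the pointwise rejection bound of Lemma~\ref{lem:rupb} with the tail bound \eqref{eq:Z-tail-bound} of Proposition~\ref{prop:positivemoments} through a multiscale (dyadic peeling) decomposition of the event $\{\hat{Z}(\bfx)>2\}$.

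\textbf{Step 1: a uniform pointwise bound on $\{\hat Z(\bfx)>2\}$.} I would apply Lemma~\ref{lem:rupb} with the fixed choice $\theta=1/2$. Since $q(\omega^p)\ge q(\omega)^p=1$ by Jensen's inequality, the resulting constant $\kappa_p:=\tfrac12 c_p(\tfrac12)=2^{-(2p-1)/(p-1)}q(\omega^p)^{-1/(p-1)}$ satisfies $0<\kappa_p<1<2<\hat Z(\bfx)$ on the event of interest, so $\min\{1,\theta/\hat Z(\bfx)\}=\tfrac12/\hat Z(\bfx)$ and Lemma~\ref{lem:rupb} yields $0\le r(\bfx)\le 1-\kappa_p/\hat Z(\bfx)$. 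Raising to the power $t$ and using $1-u\le e^{-u}$ gives $r(\bfx)^t\le \exp\!\big(-t\kappa_p/\hat Z(\bfx)\big)$ on $\{\hat Z(\bfx)>2\}$.

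\textbf{Step 2: dyadic decomposition and the tail bound.} Write $\{\hat Z(\bfx)>2\}=\bigcup_{j\ge1}\{2^j<\hat Z(\bfx)\le 2^{j+1}\}$. On the $j$-th layer, $\hat Z(\bfx)\le 2^{j+1}$ gives $r(\bfx)^t\le \exp(-t\kappa_p 2^{-j-1})$, while \eqref{eq:Z-tail-bound} applied with ``$t$'' set to $2^j-1\ge 2^{j-1}$ bounds the probability of that layer by $2^pM(p)N^{-p/2}2^{-jp}$. Summing over $j$,
\[
\mathbb{E}_{\bar q}\!\left[r(\bfx)^t\mathds{1}(\hat Z(\bfx)>2)\right]\le \frac{2^pM(p)}{N^{p/2}}\sum_{j\ge1}\frac{\exp(-t\kappa_p 2^{-j-1})}{2^{jp}},
\]
so the task reduces to showing $\sum_{j\ge1}2^{-jp}\exp(-t\kappa_p2^{-j-1})\le C_p\,t^{-p}$ for a constant $C_p$ depending only on $p$ and $q(\omega^p)$ (through $\kappa_p$).

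\textbf{Step 3: summing the series.} The summand is maximized near $j^\star$ with $2^{j^\star+1}\approx t\kappa_p/p$, where its value is of order $(t\kappa_p)^{-p}$ up to a $p$-dependent constant. I would split the sum at $j^\star$: for $j\ge j^\star$ bound the exponential by $1$ and sum the geometric tail $\sum 2^{-jp}$; for $j<j^\star$, writing $j=j^\star-\ell$, the exponents $t\kappa_p2^{-j-1}$ grow like $p2^\ell$ while $2^{-jp}$ only grows geometrically, so the terms are dominated by $2^{-j^\star p}\sum_{\ell\ge1}2^{\ell p}e^{-p2^\ell}$, a convergent series. Both pieces are $\le C_p t^{-p}$; for the finitely many $t$ with $j^\star<1$ the crude bound $\sum_j 2^{-jp}\le C$ is trivially $\le C_p t^{-p}$ after enlarging $C_p$. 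Combining the three steps gives the statement with $C=2^pM(p)C_p$, depending only on $p$ and $q(\omega^p)$.

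The main obstacle is Step~3: extracting the \emph{sharp} rate $t^{-p}$ rather than $t^{-p}\log t$. A naive estimate bounds each of the $O(\log t)$ relevant terms by $C_p t^{-p}$ and loses a logarithm; one must exploit that the terms decay geometrically on \emph{both} sides of the peak located at scale $\hat Z(\bfx)\asymp t$, which is precisely the multiscale balancing flagged in the introduction. An alternative to Steps~2--3 is a Stieltjes integration by parts of $\mathbb{E}_{\bar q}[\exp(-t\kappa_p/\hat Z(\bfx))\mathds{1}(\hat Z(\bfx)>2)]$ against the tail function $z\mapsto\mathbb{P}_{\bar q}(\hat Z(\bfx)>z)$, which reduces the computation to $\int_0^\infty u^p e^{-u}\,\mathrm{d}u=\Gamma(p+1)<\infty$; this sidesteps the discretization but is morally the same argument.
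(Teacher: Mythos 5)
Your proof is correct, and it reaches the same pointwise bound on $r(\bfx)$ as the paper (Lemma~\ref{lem:rupb} with $\theta=1/2$, exponentiated via $1-u\le e^{-u}$), but the peeling is genuinely different. The paper splits $\{\hat Z>2\}$ into $\{\hat Z\ge 1+t\}$ (handled directly by \eqref{eq:Z-tail-bound}) and $\{\hat Z\in(2,1+t)\}$, and then peels the latter using the $t$-adapted intervals $A_k=[t/(k+1),t/k]$ for $\hat Z-1$. The payoff of that choice is that each summand automatically factors as $e^{-ck}\cdot M(p)N^{-p/2}(k+1)^p/t^p$, so $t^{-p}$ comes out of every term uniformly and the series estimate reduces to the trivially convergent $\sum_k e^{-ck}(k+1)^p$. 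Your dyadic peeling $\{2^j<\hat Z\le 2^{j+1}\}$ is agnostic to $t$, which forces the additional balancing step: you must locate the peak $j^\star$ with $2^{j^\star+1}\asymp t\kappa_p/p$ and verify that the summand decays geometrically above $j^\star$ (bounding $e^{-\cdot}\le1$) and doubly-exponentially below it (via $2^{\ell p}e^{-p2^\ell}$), to avoid losing a spurious $\log t$. Both decompositions deliver the sharp $t^{-p}$; yours is a more generic, reusable peeling at the cost of the extra balancing argument in Step~3, while the paper's $t$-tuned intervals make that step disappear. Your Stieltjes integration-by-parts alternative (giving $\Gamma(p+1)$) would be the continuous version of the same idea and is also valid.
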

  
  \begin{proof}
    We split the expectation into two parts:
    \begin{align*}
      \mathbb{E}_{\bar{q}}\left[r(\bfx)^t\mathds{1}\left(\ZN(\bfx)> 2\right)\right] 
      &= \mathbb{E}_{\bar{q}}\left[r(\bfx)^t\mathds{1}\left(\ZN(\bfx)\in (2,1+t)\right)\right] \\
      &+ \mathbb{E}_{\bar{q}}\left[r(\bfx)^t\mathds{1}\left(\ZN(\bfx)\geq 1+t\right)\right].
    \end{align*}
    
    For $\{\ZN(\bfx)\geq 1+t\}$, we directly apply Proposition~\ref{prop:zhatconcentration}:
    \begin{align*}
      \mathbb{E}_{\bar{q}}\left[r(\bfx)^t\mathds{1}(\ZN(\bfx)\geq 1+t)\right] 
      &\leq \mathbb{P}_{\bar{q}}\left(\ZN(\bfx)\geq 1+t\right) \\
      &\leq \frac{M(p)^p}{N^{p/2}t^{p}}.
    \end{align*}
    
    For $\{\ZN(\bfx)\in (2,1+t)\}$, we use Lemma~\ref{lem:rupb} with $\theta = 1/2$:
    \begin{align*}
      r(\bfxsmall) &\leq 1 - \frac{c_p(1/2)}{2\ZN(\bfxsmall)} \leq \exp\left(-\frac{c_p(1/2)}{2\ZN(\bfxsmall)}\right).
    \end{align*}
    
    Let $c = c_p(1/2)/4$. Then using the fact that $\ZN(\bfxsmall)>2$ implies that $\ZN(\bfxsmall)\leq 2(\ZN(\bfxsmall)-1)$, we have:
    \begin{align*}
      r(\bfxsmall)^t &\leq \exp\left(-\frac{2ct}{\ZN(\bfxsmall)}\right) \leq \exp\left(-\frac{ct}{\ZN(\bfxsmall)-1}\right).
    \end{align*}
    
    We introduce the sets $A_k = \left[t/(k+1), t/k\right]$ for $k \geq 1$, so that $\cup_{k=1}^{\infty} A_k = [0,t]$ which contains $[1,t]$. Using the result of Proposition~\ref{prop:zhatconcentration}, we obtain the bound:
    \begin{align*}
      &\mathbb{E}_{\bar{q}}\left[r(\bfx)^t\mathds{1}(\ZN(\bfx)\in (2,1+t))\right] \\
      &\leq \mathbb{E}_{\bar{q}}\left[\exp\left(-\frac{ct}{\ZN(\bfx)-1}\right)\mathds{1}\left(\ZN(\bfx)-1\in (1,t)\right)\right] \\
      &\leq \sum_{k=1}^{\infty} \mathbb{E}_{\bar{q}}\left[\exp\left(-\frac{ct}{\ZN(\bfx)-1}\right)\mathds{1}\left(\ZN(\bfx)-1\in A_k\right)\right] \\
      &\leq \sum_{k=1}^{\infty} \exp(-ck) \mathbb{P}_{\bar{q}}\left(\ZN(\bfx)\geq 1+t/(k+1)\right) \\
      &\leq \sum_{k=1}^{\infty} \exp(-ck) \frac{M(p)^p}{N^{p/2}}\left(\frac{k+1}{t}\right)^{p}.
    \end{align*}
    
    Let $S_p = \sum_{k=1}^{\infty} \exp(-ck) (k+1)^{p}$, which is finite. Then:
    \begin{align*}
      \mathbb{E}_{\bar{q}}\left[r(\bfx)^t\mathds{1}\left(\ZN(\bfx)\in (2,1+t)\right)\right] 
      &\leq \frac{M(p)^p S_p}{N^{p/2}t^{p}}.
    \end{align*}    
    Combining the bounds for both parts, we get:
    \begin{align*}
      \mathbb{E}_{\bar{q}}\left[r(\bfx)^t\mathds{1}\left(\ZN(\bfx)> 2\right)\right] 
      &\leq \frac{M(p)^p}{N^{p/2}t^{p}} + \frac{M(p)^p S_p}{N^{p/2}t^{p}} \\
      &\leq \frac{M(p)^p(1 + S_p)}{N^{p/2}t^{p}}.
    \end{align*}
    Setting the new constant $C := M(p)^p\left(1 + S_p\right)$ completes the proof.
  \end{proof}
  Now, we turn our attention to controlling the expectation when $\ZN(\bfx) \leq 2$. 
  
  \begin{lem}\label{lem:rejection_expectation_bound}
    Fix $p \geq 2$ and let $\beta_p$ be defined as in \eqref{eq:betap}.
    There exist constants $A_p, B_p>0$, depending only on $p$ and $q(\omega^p)$, such that for all $N\geq 1$, for all $t\geq 1$, the following holds:
    \begin{equation*}
      \mathbb{E}_{\bar{q}}\left[r(\bfx)^t\mathds{1}\left(\ZN(\bfx)\leq 2\right)\right] \leq 
      \left[\frac{A_p}{N^\frac{t\wedge p}{2}}+ \frac{B_p }{N^{p/2}}\right]\beta_p^t.        
    \end{equation*}
    
  \end{lem}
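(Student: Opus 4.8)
\emph{Proof plan.} The plan is to combine two estimates on the rejection probability $r(\bfx)$ that both hold on the event $\{\hat{Z}(\bfx)\le 2\}$: a \emph{uniform} bound $r(\bfx)\le\beta_p$, and a \emph{quantitative} bound showing that $r(\bfx)$ is of order $N^{-1/2}$ whenever $\hat{Z}(\bfx)$ is close to $1$. The uniform bound follows from Lemma~\ref{lem:rupb} with $\theta=1/2$: on $\{\hat{Z}(\bfx)\le 2\}$ we have $\min\{1,(1/2)/\hat{Z}(\bfx)\}\ge 1/4$, hence
\[
  r(\bfx)\le 1-\tfrac14\,c_p(1/2)=1-\frac{1}{2^{(3p-2)/(p-1)}\,q(\omega^p)^{1/(p-1)}}=\beta_p ,
\]
which also shows $\beta_p\in(0,1)$, since $q(\omega^p)\ge q(\omega)^p=1$ forces $0<c_p(1/2)<1$.

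For the quantitative part I would split $\{\hat{Z}(\bfx)\le 2\}$ into $\{\hat{Z}(\bfx)<\tfrac12\}$ and $\{\tfrac12\le\hat{Z}(\bfx)\le 2\}$. On the first piece I use only $r(\bfx)\le\beta_p$ together with Markov's inequality and \eqref{eq:prop:positivemoments:centred}, namely $\mathbb{P}_{\bar q}(\hat{Z}(\bfx)<\tfrac12)\le\mathbb{P}_{\bar q}(|\hat{Z}(\bfx)-1|>\tfrac12)\le 2^pM(p)N^{-p/2}$, so this piece contributes at most $2^pM(p)\,\beta_p^t\,N^{-p/2}$. On the second piece, write $\kappa_N:=\int|\hat{Z}(\bfx^{\star})-1|\,\bar q(\mathrm{d}\bfx^{\star})\le M(p)^{1/p}N^{-1/2}$ (Jensen and \eqref{eq:prop:positivemoments:centred}); then, for fixed $\bfx$ with $\hat{Z}(\bfx)\ge\tfrac12$, using \eqref{eq:pimh:reject}--\eqref{eq:def:pimh-acceptanceratio},
\[
  r(\bfx)\le\int\Big(1-\tfrac{\hat{Z}(\bfx^{\star})}{\hat{Z}(\bfx)}\Big)_{\!+}\bar q(\mathrm{d}\bfx^{\star})=\frac{1}{\hat{Z}(\bfx)}\int\big(\hat{Z}(\bfx)-\hat{Z}(\bfx^{\star})\big)_{+}\bar q(\mathrm{d}\bfx^{\star})\le 2\big(|\hat{Z}(\bfx)-1|+\kappa_N\big),
\]
using $(\hat{Z}(\bfx)-\hat{Z}(\bfx^{\star}))_{+}\le|\hat{Z}(\bfx)-1|+|\hat{Z}(\bfx^{\star})-1|$ and $\hat{Z}(\bfx)^{-1}\le 2$.

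It then remains to raise these bounds to the power $t$ and integrate. For $t\le p$, the $C_p$-inequality $(a+b)^t\le 2^{t-1}(a^t+b^t)$ together with $\mathbb{E}_{\bar q}[\,|\hat{Z}(\bfx)-1|^t\,]\le(M(p)N^{-p/2})^{t/p}$ (Jensen, valid since $t\le p$) show that the second piece contributes at most $L_p^{\,t}N^{-t/2}$, where $L_p:=4M(p)^{1/p}$; since $L_p/\beta_p>1$ and $t\le p$ one has $L_p^{\,t}=(L_p/\beta_p)^t\beta_p^t\le(L_p/\beta_p)^p\,\beta_p^t$, giving the bound $(L_p/\beta_p)^p\,\beta_p^t\,N^{-t/2}$. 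For $t>p$ I would instead peel off factors using the uniform bound, $r(\bfx)^t\le\beta_p^{\,t-p}r(\bfx)^p$, and apply the $t=p$ case of the previous estimate, $\mathbb{E}_{\bar q}[r(\bfx)^p\mathds{1}(\tfrac12\le\hat{Z}(\bfx)\le 2)]\le L_p^{\,p}N^{-p/2}$, which yields $\beta_p^{\,t-p}L_p^{\,p}N^{-p/2}=(L_p/\beta_p)^p\,\beta_p^t\,N^{-p/2}$. Collecting the two pieces establishes the claim with $A_p=(L_p/\beta_p)^p$ and $B_p=2^pM(p)$, both depending only on $p$ and $q(\omega^p)$ through $M(p)$ and $\beta_p$, for every $N\ge 1$ and $t\ge 1$.

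The step I expect to be the main obstacle is this last reconciliation. The honest estimate coming from the regime where $\hat{Z}(\bfx)$ concentrates near $1$ naturally produces a factor $(\mathrm{const})^t N^{-(t\wedge p)/2}$ with a constant strictly larger than $\beta_p$, so one genuinely needs the structural inequality $r(\bfx)\le\beta_p$ on $\{\hat{Z}(\bfx)\le 2\}$ — used through the exponent comparison $(L_p/\beta_p)^t\le(L_p/\beta_p)^p$ when $t\le p$, and through the peeling $\beta_p^{\,t-p}$ when $t>p$ — to absorb the discrepancy into a $p$-dependent constant. Everything else is a routine combination of Markov's, Jensen's and the $C_p$-inequalities with Proposition~\ref{prop:positivemoments} and Lemma~\ref{lem:rupb}.
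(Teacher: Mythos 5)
Your proof is correct. Both you and the paper use the same overall plan: the monotonicity of $r$ together with the uniform bound $r\le\beta_p$ on $\{\hat{Z}\le 2\}$ supplies the geometric factor $\beta_p^t$, and the set $\{\hat{Z}\le 2\}$ is split according to proximity to $1$, with Markov's inequality handling the far piece and moment bounds on $\hat{Z}-1$ handling the near piece; the extra peeling (you use $\beta_p^{\,t-p}$ when $t>p$, the paper uses $r(2)^{t-t\wedge p}$ upfront) caps the decay rate in $N$ at $p/2$. The one genuine difference is in the near-one regime: you first linearize $r(\bfx)\le 2(|\hat{Z}(\bfx)-1|+\kappa_N)$, then raise to the power $t$ and use Jensen at the level $\mathbb{E}[|\hat{Z}(\bfx)-1|^t]\le\mathbb{E}[|\hat{Z}(\bfx)-1|^p]^{t/p}$, whereas the paper keeps $r(\bfx)^{t\wedge p}=\big(\hat{Z}(\bfx)^{-1}\int(\hat{Z}(\bfx)-z^\ast)_+\bar q(dz^\ast)\big)^{t\wedge p}$ intact, pushes the power inside via Jensen applied to the normalized measure $\bar q(\cdot)/\bar q\{[0,\hat{Z}(\bfx)]\}$, and lands on $\mathbb{E}_{(\bfx,\bfx')\sim\bar q\otimes\bar q}\big[|\hat{Z}(\bfx)-\hat{Z}(\bfx')|^{t\wedge p}\big]$. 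Both routes produce a bound of order $(\mathrm{const})^{t\wedge p}N^{-(t\wedge p)/2}$ and the reconciliation with $\beta_p^t$ is the same maneuver in either case; yours is arguably more elementary since it avoids the normalization-of-the-integral step. No gap.
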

  
  \begin{proof}
    We abuse notation to write $r$ as a function of the value $z$ taken by $\ZN(\bfx)$, 
    instead of a function of $\bfx$, in various places in this proof. First notice that $r(z)$ is increasing in $z$. 
    We thus have that for $t\geq 1$ that
    \begin{align*}
      &\mathbb{E}_{\bar{q}}\left[r(\ZN(\bfx))^t\mathds{1}\left(\ZN(\bfx)\leq 2\right)\right]\\
      &\leq r(2)^{t-t\wedge p}\mathbb{E}_{\bar{q}}\left[r(\ZN(\bfx))^{t\wedge p} \mathds{1}\left(\ZN(\bfx)\leq 2\right)\right].
    \end{align*}
    We first consider the second factor. We have for any $\alpha \in (0,1)$,
    \begin{align}
      &\mathbb{E}_{\bar{q}}\left[r(\ZN(\bfx))^{t\wedge p} \mathds{1}\left(\ZN(\bfx)\leq 2\right)\right]\nonumber \\
      &\leq \mathbb{E}_{\bar{q}}\left[r(\ZN(\bfx))^{t\wedge p} \mathds{1}\left(1-\alpha \leq \ZN(\bfx)\leq 2\right)\right] + r(2)^{t\wedge p}\bar{q}\left\{|\ZN(\bfx)-1|\geq \alpha \right\}.\label{eq:lemmaA.7:intermed}
    \end{align}
    That is because $\{\ZN(\bfx)\leq 1-\alpha\}\subset \{|\ZN(\bfx)-1|\geq \alpha\}$, and $r(z)\leq r(2)$ for $z\leq 2$.
    
    At this point, notice that by Lemma~\ref{lem:rupb} with $\theta =1/2$ we have that 
$r(2)\leq 1- c_p(1/2)/4 = \beta_p$,
    where $\beta_p$ is defined in \eqref{eq:betap}.
    Also notice that 
    \begin{align*}
      r(z) &= 1 - \int \min\left\{ 1, \frac{z^\ast}{z}\right\} \bar{q}(d z^\ast)= \int_{z^\ast =0}^\infty \bar{q}(dz^\ast) - \int_{z^\ast=0}^z \frac{z^\ast}{z} \bar{q}(d z^\ast) - \int_{z^\ast=z}^\infty \bar{q}(d z^\ast)\\
      &= \int_{z^\ast =0}^{z} \bar{q}(dz^\ast) - \int_{z^\ast = 0}^{z} \frac{z^\ast}{z} \bar{q}(dz^\ast) = \int_{z^\ast =0}^{z} \left(\frac{z - z^\ast}{z}\right) \bar{q}(dz^\ast).
    \end{align*}
    Returning to our calculation regarding the first term in \eqref{eq:lemmaA.7:intermed},
    \begin{align*}
      &\mathbb{E}_{\bar{q}}\left[r(\ZN(\bfx))^{t\wedge p} \mathds{1}\left(1-\alpha \leq \ZN(\bfx)\leq 2\right)\right]\\
      &=\mathbb{E}_{\bar{q}}\left[ \left(\frac{1}{\ZN(\bfx)} \int_{z^\ast=0}^{\ZN(\bfx)} (\ZN(\bfx) - z^\ast)\bar{q}(d z^\ast) \right)^{t\wedge p} \mathds{1}\left(1-\alpha \leq \ZN(\bfx)\leq 2\right)\right]\\
      &\leq 
      \frac{ \bar{q}\{ [0, \ZN(\bfx)]\}^{t\wedge p} }{(1-\alpha)^{t\wedge p}} \\
      & \times 
      \mathbb{E}_{\bar{q}}\left[ \left( \int_{z^\ast=0}^{\ZN(\bfx)} (\ZN(\bfx) - z^\ast)\frac{\bar{q}(d z^\ast)}{\bar{q}\{ [0, \ZN(\bfx)]\}} \right)^{t\wedge p} \mathds{1}\left(1-\alpha \leq \ZN(\bfx)\leq 2\right)\right]\\
      &\leq 
      \frac{ \bar{q}\{ [0, \ZN(\bfx)]\}^{t\wedge p-1} }{(1-\alpha)^{t\wedge p}}\\
      &\times
      \mathbb{E}_{\bar{q}}\left[ \int_{z^\ast=0}^{\ZN(\bfx)} (\ZN(\bfx) - z^\ast)^{t\wedge p} \bar{q}(d z^\ast)\;\cdot \;\mathds{1}\left(1-\alpha \leq \ZN(\bfx)\leq 2\right)\right]\\
      &\leq 
      \frac{ 1 }{(1-\alpha)^{t\wedge p}}
      \mathbb{E}_{\bar{q}}\left[ \int_{z^\ast=0}^{\ZN(\bfx)} |\ZN(\bfx) - z^\ast|^{t\wedge p} \bar{q}(d z^\ast)\right]\\
      &\leq 
      \frac{ 1 }{(1-\alpha)^{t\wedge p}}
      \mathbb{E}_{(\bfx,\bfxprime)\sim \bar{q}\otimes \bar{q}}\left[ |\ZN(\bfx) - \ZN(\bfxprime)|^{t\wedge p} \right] 
      \leq \frac{ 1 }{(1-\alpha)^{t\wedge p}} \frac{A_p\bar{q}(\omega^p)}{N^\frac{t\wedge p}{2}},
    \end{align*}
    for a constant $A_p$ depending only on $p$. The first inequality comes
    from $\ZN(\bfx)^{-1}\leq (1-\alpha)^{-1}$ on the event of interest, 
    the second inequality is from Jensen's since the function $u\mapsto u^{t\wedge p}$ is convex,
    the third inequality is from $\bar{q}(A)\leq 1$ and the indicator being smaller than one,
    the fourth is obtained by completing the integral over all $z^\ast \in (0,\infty)$,
    and the last is from a reasoning similar to the proof of Proposition~\ref{prop:zhatconcentration},
    or by direct application of Minkowski's inequality and Proposition~\ref{prop:zhatconcentration}.
    
    Overall, choosing $\alpha=1/2$ 
    we have that 
    \begin{align*}
      \mathbb{E}_{\bar{q}}\left[r(\bfx)^{t\wedge p} \mathds{1}\left(\ZN(\bfx)\leq 2\right)\right]&\leq 2^{t\wedge p} A_p \bar{q}(\omega^p) N^{-t\wedge p / 2} + r(2)^{t\wedge p} \bar{q}\left\{|\ZN(\bfx) - 1| \geq \alpha \right\}\\
      &\leq 2^{t\wedge p} A_p \bar{q}(\omega^p) N^{-t\wedge p / 2} + \beta_p^{t\wedge p} C_p N^{-p/2} 2^{p},
    \end{align*}
    using Markov's inequality as in Proposition~\ref{prop:zhatconcentration}.
    Finally, multiplying by $r(2)^{t-t\wedge p}$ we obtain
    \begin{align*}
      &\mathbb{E}_{\bar{q}}\left[r(\bfx)^{t} \mathds{1}\left(\ZN(\bfx)\leq 2\right)\right]\leq \frac{\beta_p^{t} \beta_p^{-t\wedge p} 2^{t \wedge p} A_p \bar{q}(\omega^p)}{N^\frac{t\wedge p}{2}} + \frac{2^p C_p\beta_p^{t}}{N^{p/2}},
    \end{align*}
    and we note that, since $\beta_p\leq 1$, we have $\beta_p^{-t\wedge p} 2^{t\wedge p} \leq \beta_p^{-p} 2^p$,
    and thus we can define $A_p$ and $B_p$ to obtain Lemma~\ref{lem:rejection_expectation_bound}.
  \end{proof}

  \begin{proof}[Proof of Proposition~\ref{prop:expected_rejection_probability_bound}]
    We combine the bounds from Lemmas \ref{lem:large-Z-bound} and \ref{lem:rejection_expectation_bound},
    and note that the two terms in the bound of Lemma~\ref{lem:rejection_expectation_bound}
    can be bounded by $A_p \beta_p^t N^{-(t\wedge p)/2}$ for some constant $A_p$, which is not the same $A_p$
    as in the statement of Lemma~\ref{lem:rejection_expectation_bound}.
  \end{proof}

  \subsubsection{Proofs of Theorem~\ref{thm:convergence_rate_from_q} and Corollary~\ref{cor:convergence_rate_from_x}\label{appx:proof:thm:convergence_rate_from_q}}
  
  \begin{proof}[Proof of Theorem~\ref{thm:convergence_rate_from_q}]
    Under Assumption~\ref{asm:abscontinuitypositivity}, the PIMH chain is $\bar{\pi}$-irreducible, and by construction it is aperiodic and $\bar{\pi}$-invariant, therefore 
$|\bar{q}P^t-\bar{\pi}|_{\mathrm{TV}}\to 0$ as $t\to\infty$ \citep[Theorem 4 in][]{roberts2004general}.
    Thus, for any $t\geq 0$, by the triangle inequality,
    \begin{equation*}
      |\bar{q}P^t - \bar{\pi}|_{\mathrm{TV}} \leq \sum_{j=1}^\infty |\bar{q}P^{t+j} - \bar{q}P^{t+j-1}|_{\mathrm{TV}}.
    \end{equation*}
    By the coupling representation of the TV distance, for any $t\geq 0$, $j\geq 1$,
    \begin{equation*}
      |\bar{q}P^{t+j} - \bar{q}P^{t+j-1}|_{\mathrm{TV}} \leq \mathbb{E}[\mathds{1}(\bfx_{t+j}\neq \bfy_{t+j-1})] = \mathbb{P}(\tau > t +j),
    \end{equation*}
    where $(\bfx_t)$ and $(\bfy_t)$ are jointly generated by Algorithm~\ref{alg:coupledPIMHlag}. Under Assumption~\ref{asm:pfinitemoments}, Proposition~\ref{prop:upb_meetingproba} applies and thus the series $\sum_{j=1}^\infty \mathbb{P}(\tau > t + j)$ converges.
    Thus, by the dominated convergence theorem we may swap expectation and limit to write 
    \begin{equation*}
      |\bar{q}P^t - \bar{\pi}|_{\mathrm{TV}} \leq \mathbb{E}\left[\sum_{j=1}^\infty \mathds{1}(\bfx_{t+j}\neq \bfy_{t+j-1})\right]=\mathbb{E}\left[\max(0,\tau-t-1)\right],
    \end{equation*}
    for all $t\geq 0$. This is \eqref{eq:tvupperbound_from_laggedchains}.
    
    We may express the expectation of a non-negative variable as a series of survival probabilities:
\[
    \mathbb{E}\left[\max\left(0,  \tau - t - 1\right)\right] = \sum_{s=1}^{\infty} \mathbb{P}\left(\max\left(0,  \tau - t - 1 \right) \geq s\right).
\]
    For any $t\geq 0,s\geq 1$, $\max(0,\tau-1-t)\geq s$ if and only if $\tau> s+t$.
    Under Assumption~\ref{asm:pfinitemoments}, Proposition~\ref{prop:upb_meetingproba} obtains
\[
    \mathbb{P}\left(\tau > s + t\right) \leq CN^{-1/2} (s + t)^{-p}.
\]
    The series $\sum_{s=1}^\infty (s+t)^{-p}$ can be bounded as follows:
    \begin{align}
      \sum_{s=1}^{\infty} (s+t)^{-p} &\leq (1+t)^{-p} + \int_{1+t}^{\infty} x^{-p} dx  \nonumber\\
      &= (1+t)^{-p+1} \left(\frac{(1+t/p)p}{(1+t)(p-1)} \right)\nonumber\\
      &\leq \frac{p}{(p-1)(1+t)^{p-1}},\label{eq:series_bound_in_proof_of_thm4.1}
    \end{align}
    using the fact that $f(k) \leq \int_{k-1}^k f(x)dx$ for any decreasing function $f$.
    Thus, for $t\geq 0$,
\[
    |\bar{q}P^t - \bar{\pi}|_{\text{TV}} \leq \frac{Cp}{\sqrt{N}(p-1)(1+t)^{p-1}},
\]
    which completes the proof.
  \end{proof}
  
  \begin{proof}[Proof of Corollary~\ref{cor:convergence_rate_from_x}]
    The proof starts with multiple applications of the triangle inequality, Theorem~\ref{thm:couplingIMH}, $\max(a,b)\leq a + b$ for $a,b\geq 0$:
    \begin{align*}
      \left| P^t(\bfx,\cdot)-\bar{\pi}\right|_{\mathrm{TV}} &\le \left|P^t(\bfx,\cdot)-\bar{q}P^t\right|_{\mathrm{TV}}+\left|\bar{q}P^t-\bar{\pi}\right|_{\mathrm{TV}}\\
      &\le \displaystyle\int \left|P^t(\bfx,\cdot)-P^t(\bfysmall,\cdot)\right|_{\mathrm{TV}}\bar{q}(\diff \bfysmall)+\left|\bar{q}P^t-\bar{\pi}\right|_{\mathrm{TV}}\\
      &= \displaystyle\int \max\left(r(\bfx),r(\bfysmall)\right)^t\bar{q}(\diff \bfysmall)+\left|\bar{q}P^t-\bar{\pi}\right|_{\mathrm{TV}}\\
      & \le \left(r(\bfx)\right)^t+ \mathbb{E}_{\bar{q}}[\left(r(\bfy)\right)^t]+\left|\bar{q}P^t-\bar{\pi}\right|_{\mathrm{TV}}.
    \end{align*}
    The result then follows from  Proposition~\ref{prop:expected_rejection_probability_bound} and Theorem~\ref{thm:convergence_rate_from_q}.
  \end{proof}

  \subsubsection{Proofs of Theorem~\ref{thm:convergence_rate_from_q_less2}, Corollary~\ref{cor:convergence_rate_from_x_less2} and Corollary~\ref{prop:convimh_from_x}}\label{appx:proofs:PIMH_less2moments}

  We first treat PIMH in the regime $p\in(1,2)$, and then prove the IMH specialization in Corollary~\ref{prop:convimh_from_x}.

  The following is an adaptation of Lemma~\ref{lem:large-Z-bound}
  in the case $p\in(1,2)$.

  \begin{lem}\label{lem:large-Z-bound_less2}
    Under Assumptions~\ref{asm:abscontinuitypositivity}-\ref{asm:pfinitemoments} with $p\in(1,2)$, there exists a constant $B>0$ such that for all $t\geq 1$ and $N\geq 1$,
    \[
    \mathbb{E}_{\bar{q}}\left[r(\bfx)^t\mathds{1}(\ZN(\bfx)>2)\right]
    \leq \frac{B}{N^{p-1}t^p}.
    \]
  \end{lem}
  
  \begin{proof}
    We split the expectation into two parts:
    \begin{align*}
      \mathbb{E}_{\bar{q}}\left[r(\bfx)^t\mathds{1}(\ZN(\bfx)>2)\right]
      &= \mathbb{E}_{\bar{q}}\left[r(\bfx)^t\mathds{1}(\ZN(\bfx)\in(2,1+t))\right]\\
      &+ \mathbb{E}_{\bar{q}}\left[r(\bfx)^t\mathds{1}(\ZN(\bfx)\geq 1+t)\right].
    \end{align*}
    For $\{\ZN(\bfx)\geq 1+t\}$, we directly apply Proposition~\ref{prop:zhatconcentration_less2}:
    \begin{align*}
      \mathbb{E}_{\bar{q}}\left[r(\bfx)^t\mathds{1}(\ZN(\bfx)\geq 1+t)\right]
      &\leq \mathbb{P}_{\bar{q}}\left(\ZN(\bfx)\geq 1+t\right)
      \leq \frac{2 q(|\omega-1|^p)}{N^{p-1}t^p}.
    \end{align*}
    For $\{\ZN(\bfx)\in(2,1+t)\}$, we use Lemma~\ref{lem:rupb} with $\theta=1/2$:
    \[
    r(\bfxsmall) \leq 1-\frac{c_p(1/2)}{2\ZN(\bfxsmall)} \leq \exp\left(-\frac{c_p(1/2)}{2\ZN(\bfxsmall)}\right).
    \]
    Let $c=c_p(1/2)/4$. Since $\ZN(\bfxsmall)>2$ implies $\ZN(\bfxsmall)\leq 2(\ZN(\bfxsmall)-1)$, we have
    \[
    r(\bfxsmall)^t \leq \exp\left(-\frac{ct}{\ZN(\bfxsmall)-1}\right).
    \]
    Introduce the sets $A_k=[t/(k+1),t/k]$ for $k\geq 1$, so that $\cup_{k=1}^{\infty}A_k=[0,t]$ and in particular covers $(1,t)$. Then
    \begin{align*}
      \mathbb{E}_{\bar{q}}\left[r(\bfx)^t\mathds{1}(\ZN(\bfx)\in(2,1+t))\right]
      &\leq \mathbb{E}_{\bar{q}}\left[\exp\left(-\frac{ct}{\ZN(\bfx)-1}\right)\mathds{1}(\ZN(\bfx)-1\in(1,t))\right]\\
      &\leq \sum_{k=1}^{\infty}\exp(-ck)\,\mathbb{P}_{\bar{q}}\left(\ZN(\bfx)\geq 1+\frac{t}{k+1}\right)\\
      &\leq \sum_{k=1}^{\infty}\exp(-ck)\frac{2q(|\omega-1|^p)}{N^{p-1}}\left(\frac{k+1}{t}\right)^p,
    \end{align*}
    where the last inequality uses Proposition~\ref{prop:zhatconcentration_less2}. Since $\sum_{k=1}^{\infty} e^{-ck}(k+1)^p<\infty$, this term is also bounded by a constant multiple of $(N^{p-1}t^p)^{-1}$. Combining both parts completes the proof.
  \end{proof}

The next lemma is similar to Lemma~\ref{lem:rejection_expectation_bound} but adapted for the case $p\in(1,2)$.
  \begin{lem}\label{lem:small-Z-bound_less2}
    Under Assumptions~\ref{asm:abscontinuitypositivity}-\ref{asm:pfinitemoments} with $p\in(1,2)$, define
    \begin{equation}\label{eq:betap_less2}
      \beta_p:=1-\frac{1}{2^{\frac{3p-2}{p-1}}q(\omega^p)^{\frac{1}{p-1}}}.
    \end{equation}
    Then $\beta_p\in(0,1)$, and there exists a constant $A>0$ such that for all $t\geq 1$ and $N\geq 1$,
    \[
    \mathbb{E}_{\bar{q}}\left[r(\bfx)^t\mathds{1}(\ZN(\bfx)\leq 2)\right]
    \leq \frac{A}{N^{(p-1)/p}}\beta_p^t.
    \]
  \end{lem}
  
  \begin{proof}
    We abuse notation slightly and write $r(z)$ for the rejection probability as a function of the value $z=\ZN(\bfx)$. Since $r(z)$ is non-decreasing in $z$, on $\{\ZN(\bfx)\leq 2\}$ we have
    \[
    r(\bfx)^t\mathds{1}(\ZN(\bfx)\leq 2)
    \leq r(2)^{t-1} r(\bfx)\mathds{1}(\ZN(\bfx)\leq 2).
    \]
    By Lemma~\ref{lem:rupb} with $\theta=1/2$, we have
    \[
    r(2)\leq 1-\frac{c_p(1/2)}{4}=\beta_p.
    \]
    It remains to bound $\mathbb{E}_{\bar{q}}\left[r(\bfx)\mathds{1}(\ZN(\bfx)\leq 2)\right]$.
    
    Let $\ZN(\bfxprime)$ be an independent copy of $\ZN(\bfx)$ under $\bar{q}$. Writing $\nu$ for the law of $\ZN(\bfxprime)$, for any $z>0$,
    \[
    r(z)=1-\int \min\left\{1,\frac{z^\ast}{z}\right\}\nu(\diff z^\ast)
    = \int_0^z \frac{z-z^\ast}{z}\nu(\diff z^\ast).
    \]
    Therefore,
    \[
    r(\ZN(\bfx))
    = \frac{1}{\ZN(\bfx)}\mathbb{E}_{\bar{q}}\left[\left(\ZN(\bfx)-\ZN(\bfxprime)\right)_+\,\middle|\,\ZN(\bfx)\right].
    \]
    We split the event $\{\ZN(\bfx)\leq 2\}$ into $\{\ZN(\bfx)<1/2\}\cup\{1/2\leq \ZN(\bfx)\leq 2\}$.
    
    On $\{1/2\leq \ZN(\bfx)\leq 2\}$ we have $1/\ZN(\bfx)\leq 2$, hence
    \[
    r(\ZN(\bfx))\mathds{1}(1/2\leq \ZN(\bfx)\leq 2)
    \leq 2\mathbb{E}_{\bar{q}}\left[\left|\ZN(\bfx)-\ZN(\bfxprime)\right|\,\middle|\,\ZN(\bfx)\right].
    \]
    Taking expectations and using Proposition~\ref{prop:zhatconcentration_less2},
    \begin{align*}
      \mathbb{E}_{\bar{q}}\left[r(\bfx)\mathds{1}(1/2\leq \ZN(\bfx)\leq 2)\right]
      &\leq 2\mathbb{E}_{\bar{q}\otimes\bar{q}}\left[\left|\ZN(\bfx)-\ZN(\bfxprime)\right|\right]\\
      &\leq 4\cdot 2^{1/p} \cdot q(\omega^p)^{1/p}N^{-(p-1)/p}.
    \end{align*}
    On $\{\ZN(\bfx)<1/2\}$, we use the crude bound $r\leq 1$ and Proposition~\ref{prop:zhatconcentration_less2}:
    \begin{align*}
      \mathbb{E}_{\bar{q}}\left[r(\bfx)\mathds{1}(\ZN(\bfx)<1/2)\right]
      &\leq \mathbb{P}_{\bar{q}}\left(\ZN(\bfx)<1/2\right)
      \leq \mathbb{P}_{\bar{q}}\left(|\ZN(\bfx)-1|\geq 1/2\right)\\
      &\leq \frac{2^{p+1} q(|\omega-1|^p)}{N^{p-1}}.
    \end{align*}
    Since $p-1>(p-1)/p$ and $N\geq 1$, the last term is $O(N^{-(p-1)/p})$ and can be absorbed into the previous bound. Thus there exists $K>0$, depending only on $p$ and $q(\omega^p)$, such that
    \[
    \mathbb{E}_{\bar{q}}\left[r(\bfx)\mathds{1}(\ZN(\bfx)\leq 2)\right]
    \leq \frac{K}{N^{(p-1)/p}}.
    \]
    Combining this with $r(2)\leq \beta_p$ yields
    \[
    \mathbb{E}_{\bar{q}}\left[r(\bfx)^t\mathds{1}(\ZN(\bfx)\leq 2)\right]
    \leq \frac{K}{\beta_p}N^{-(p-1)/p}\beta_p^t,
    \]
    which proves the claim.
  \end{proof}
  
The following result is analogous to Proposition~\ref{prop:expected_rejection_probability_bound} but adapted for the case $p\in(1,2)$.

  \begin{prop}\label{prop:expected_rejection_probability_bound_less2}
    Under Assumptions~\ref{asm:abscontinuitypositivity}-\ref{asm:pfinitemoments} with $p\in(1,2)$, there exists a constant $C>0$ such that for all $N\geq 1$ and all $t\geq 1$,
    \[
    \mathbb{E}_{\bar{q}}\left[r(\bfx)^t\right] \leq 
    \frac{C}{N^{(p-1)/p}t^p}.
    \]
  \end{prop}
  
  \begin{proof}
    We combine the bounds from Lemmas~\ref{lem:large-Z-bound_less2} and \ref{lem:small-Z-bound_less2} and get 
    \[
    \mathbb{E}_{\bar{q}}\left[r(\bfx)^t\right]
    \leq \frac{A}{N^{(p-1)/p}}\beta_p^t + \frac{B}{N^{p-1}t^p}.
    \]
    Since $\beta_p$ defined in \eqref{eq:betap_less2} is in $(0,1)$, the quantity $\sup_{t\geq 1} t^p\beta_p^t$ is finite; call it $K_{\beta,p}$. Hence $\beta_p^t\leq K_{\beta,p}/t^p$ for all $t\geq 1$. Also $N^{-(p-1)}\leq N^{-(p-1)/p}$ for $N\geq 1$ because $p-1>(p-1)/p$. Therefore, we can pick $C = AK_{\beta,p}  + B$ and conclude that
    \[\mathbb{E}_{\bar{q}}\left[r(\bfx)^t\right] \leq 
    \frac{C}{N^{(p-1)/p}t^p}.\]
  \end{proof}

  The next result is analogous to Proposition~\ref{prop:upb_meetingproba} but adapted for the case $p\in(1,2)$.

  \begin{prop}\label{prop:upb_meetingproba_less2}
    Consider $\tau$ generated by Algorithm~\ref{alg:coupledPIMHlag}. Under Assumptions~\ref{asm:abscontinuitypositivity}-\ref{asm:pfinitemoments} with $p\in(1,2)$, there exists a finite $C>0$ such that for all $N\geq 1$ and all $t\geq 1$,
    \[
    \mathbb{P}(\tau>t) \leq \frac{C}{N^{(p-1)/p}t^p}.
    \]
  \end{prop}
  
  \begin{proof}
    Combining Proposition~\ref{prop:meetingtimes_rejproba} and Proposition~\ref{prop:expected_rejection_probability_bound_less2} immediately yields the claim.
  \end{proof}
  
  \begin{proof}[Proof of Theorem~\ref{thm:convergence_rate_from_q_less2}]
    Under Assumption~\ref{asm:abscontinuitypositivity}, the PIMH chain is $\bar{\pi}$-irreducible, and by construction it is aperiodic and $\bar{\pi}$-invariant, therefore $|\bar{q}P^t-\bar{\pi}|_{\mathrm{TV}}\to 0$ as $t\to\infty$ \citep[Theorem 4 in][]{roberts2004general}. Thus, for any $t\geq 0$, by the triangle inequality,
    \[
    |\bar{q}P^t-\bar{\pi}|_{\mathrm{TV}} \leq \sum_{j=1}^{\infty} |\bar{q}P^{t+j}-\bar{q}P^{t+j-1}|_{\mathrm{TV}}.
    \]
    By the coupling representation of the total variation distance, for any $t\geq 0$ and $j\geq 1$,
    \[
    |\bar{q}P^{t+j}-\bar{q}P^{t+j-1}|_{\mathrm{TV}}
    \leq \mathbb{E}\left[\mathds{1}(\bfx_{t+j}\neq \bfy_{t+j-1})\right]
    = \mathbb{P}(\tau>t+j),
    \]
    where $(\bfx_t)$ and $(\bfy_t)$ are jointly generated by Algorithm~\ref{alg:coupledPIMHlag}. Under the present assumptions, Proposition~\ref{prop:upb_meetingproba_less2} applies, and therefore
    \[
    |\bar{q}P^t-\bar{\pi}|_{\mathrm{TV}} \leq \sum_{j=1}^{\infty} \mathbb{P}(\tau>t+j)
    \leq \frac{C}{N^{(p-1)/p}}\sum_{j=1}^{\infty} (t+j)^{-p}.
    \]
    Finally, as in \eqref{eq:series_bound_in_proof_of_thm4.1} in the proof of Theorem~\ref{thm:convergence_rate_from_q}, we can bound
    \begin{align*}
      \sum_{j=1}^{\infty}(t+j)^{-p}
      &
      \leq \frac{p}{(p-1)(1+t)^{p-1}},
    \end{align*}
    which gives the result.
  \end{proof}
  
  \begin{proof}[Proof of Corollary~\ref{cor:convergence_rate_from_x_less2}]
    Similarly to the proof of Corollary~\ref{cor:convergence_rate_from_x}, we start from
    \begin{align*}
      \left|P^t(\bfx,\cdot)-\bar{\pi}\right|_{\mathrm{TV}}
      &\leq \left|P^t(\bfx,\cdot)-\bar{q}P^t\right|_{\mathrm{TV}} + \left|\bar{q}P^t-\bar{\pi}\right|_{\mathrm{TV}}\\
      &\leq \int \left|P^t(\bfx,\cdot)-P^t(\bfy,\cdot)\right|\bar{q}(\diff \bfy) + \left|\bar{q}P^t-\bar{\pi}\right|_{\mathrm{TV}}\\
      &= \int \max\left(r(\bfx),r(\bfy)\right)^t\bar{q}(\diff \bfy)+\left|\bar{q}P^t-\bar{\pi}\right|_{\mathrm{TV}}\\
      &\leq \left(r(\bfx)\right)^t + \mathbb{E}_{\bar{q}}\left[\left(r(\bfy)\right)^t\right] + \left|\bar{q}P^t-\bar{\pi}\right|_{\mathrm{TV}}.
    \end{align*}
    The result then follows from Proposition~\ref{prop:expected_rejection_probability_bound_less2} and Theorem~\ref{thm:convergence_rate_from_q_less2}.
  \end{proof}

  For $N=1$ the rejection probability can be controlled directly, without the von Bahr--Esseen inequality, which gives the shorter argument below.

  \begin{proof}[Proof of Corollary~\ref{prop:convimh_from_x}]
    Similarly to the proof of Corollary~\ref{cor:convergence_rate_from_x},
    we start from
    \begin{equation*}
      | P^t(x,\cdot)-\pi|_{\mathrm{TV}} = | P^t(x,\cdot)-\pi P^t|_{\mathrm{TV}}  \leq r(x)^t + \mathbb{E}_{\y\sim \pi}[r(\y)^t].
    \end{equation*}
    Let $p>1$. We can apply Lemma~\ref{lem:rupb} to obtain
    \begin{equation*}
      r(\y) \leq 1 - \min\left\{1,\frac{\theta}{\omega(\y)}\right\} \frac{(1-\theta)^{p/(p-1)}}{q(\omega^p)^{1/(p-1)}},
    \end{equation*}
    and we set $\theta = 1/2$,
    and $c = (1-\theta)^{p/(p-1)}/q(\omega^p)^{1/(p-1)}$. Note that $c \leq 1$ as $q(\omega^p) \geq q(\omega)^p = 1$.
    We next bound the expected rejection probability as follows
    \begin{align*}
      \mathbb{E}_{\y \sim \pi}\left[r(\y)^t\right]
      &\leq \mathbb{E}_{\y\sim \pi}\left[\left(1 - \min\left\{\frac{1}{2\omega(\y)},1 \right\}c\right)^t \right] \\
      &\leq \mathbb{E}_{\y \sim \pi}\left[\left(1 - \frac{c}{2}\right)^t \mathds{1}(\omega(\y)\leq 1)\right] \\
      &\quad + \mathbb{E}_{\y\sim \pi}\left[\left(1 - \frac{c}{2\omega(\y)}\right)^t \mathds{1}(\omega(\y)\in [1,t])\right] + \mathbb{P}\left(\omega(\y)\geq t\right) \\
      & \leq \left(1-\frac{c}{2}\right)^t + \mathbb{E}_{\y\sim \pi}\left[\left(1 - \frac{c}{2\omega(\y)}\right)^t \mathds{1}(\omega(\y)\in [1,t])\right] + \frac{\tilde C}{t^{p-1}}\\
      & \leq \left(1-\frac{c}{2}\right)^t + \mathbb{E}_{\y\sim \pi}\left[\exp\{- Ct/\omega(\y)\}\mathds{1}(\omega(\y)\in [1,t])\right] + \frac{\tilde C}{t^{p-1}}.
    \end{align*}
    The second inequality follows by splitting the weight into $\omega \leq 1, \omega \in [1, t]$ and $\omega > t$. The third inequality employs Markov's inequality and the assumption that $p>1$. The last inequality uses $\log(1 + x)\leq x$ with $x = -c/(2\omega(\y))$ and $C = c/2$.
    Consider the three terms on the last line. The first term decays exponentially fast with $t$, the third term decays at the rate of $t^{-(p-1)}$. It remains to bound the second term.

    Define $A_k := [t/(k+1), t/k]$, then clearly $\cup_{k=1}^\infty A_k = [0,t]$. We bound the second term as follows:
    \begin{align*}
      \mathbb{E}_{\y\sim \pi}\left[\exp\{-Ct/\omega(\y)\}\mathds{1}(\omega(\y)\in [1,t])\right] & \leq  \mathbb{E}_{\y\sim \pi}\left[\exp\{-Ct/\omega(\y)\}\mathds{1}(\omega(\y)\in [0,t])\right]\\
      & = \sum_{k=1}^{\infty}\mathbb{E}_{\y\sim \pi}\left[\exp\{-Ct/\omega(\y)\}\mathds{1}(\omega(\y)\in A_k)\right]\\
      & \leq\sum_{k=1}^\infty  \exp\{-Ct/(t/k)\}\mathbb{P}(\omega(\y) \geq t/(k+1))\\
      & \leq \sum_{k=1}^\infty \exp\{-Ck\} \frac{C' (k+1)^{p-1}}{t^{p-1}}\\
      &  = \frac{C''}{t^{p-1}} \sum_{k=1}^\infty \exp\{-Ck\} (k+1)^{p-1}\\
      &\leq \frac{C'''}{t^{p-1}}.
    \end{align*}

    The last inequality holds as $ \sum_{k=1}^\infty \exp\{-Ck\} (k+1)^{p-1} < \infty$ (the terms inside the summation decay exponentially fast). This concludes the proof.
  \end{proof}

   \subsubsection{Proof of the result in Example \ref{example:lowerbound-generalp}}\label{appx:example:lowerbound-generalp}
  
  Given
  \[
  \pi(x) := Z_\pi x^{-p}, \qquad q(x) := Z_q \log^k(x) x^{-(p+1)}, \qquad x\in[2,\infty),
  \]
  with $p> 1$ and $k>1/(p-1)$. We have
  \[
  \omega(x) = \frac{\pi(x)}{q(x)} = \frac{Z_\pi}{Z_q}\,\frac{x}{\log^k(x)}.
  \]
  We first verify that $q(\omega^p)<\infty$. Since
  \[
  q(\omega^p)=Z_q\left(\frac{Z_\pi}{Z_q}\right)^p \int_2^\infty \frac{x^p}{\log^{kp}(x)}\frac{\log^k(x)}{x^{p+1}}\diff x
  = Z_q\left(\frac{Z_\pi}{Z_q}\right)^p \int_2^\infty \frac{1}{x\log^{k(p-1)}(x)}\diff x,
  \]
  the change of variable $u=\log x$ gives
  \[
  q(\omega^p)= Z_q\left(\frac{Z_\pi}{Z_q}\right)^p \int_{\log 2}^\infty u^{-k(p-1)}\diff u<\infty,
  \]
  since $k(p-1)>1$.
  
  Now we estimate $\bP_{X\sim \pi}(X>s)$ and $\bP_{X\sim q}(X>s)$ for any $s>2$. For the former,
  \begin{align*}
    \bP_{X\sim \pi}(X>s)
    &= Z_\pi \int_s^\infty x^{-p}\diff x
    = \frac{C_1}{s^{p-1}},
    \qquad C_1:=\frac{Z_\pi}{p-1}.
  \end{align*}
  For the latter,
  \begin{align*}
    \bP_{X\sim q}(X>s)
    &= Z_q \int_s^\infty \frac{\log^k(x)}{x^{p+1}}\diff x
    = Z_q s^{-p}\int_1^\infty \log^k(su)u^{-(p+1)}\diff u\\
    &\leq Z_q 2^{k-1}s^{-p}\int_1^\infty \frac{\log^k(s)+\log^k(u)}{u^{p+1}}\diff u\\
    &\leq Z_q 2^{k-1}s^{-p}\left(\frac{\log^k(s)}{p}+\kappa_{p,k}\right)
    \leq \frac{C_2\log^k(s)}{s^p},
  \end{align*}
  where
  \[
  \kappa_{p,k}:=\int_1^\infty \log^k(u)u^{-(p+1)}\diff u<\infty,
  \qquad
  C_2:= Z_q 2^{k-1}\left(\frac{1}{p}+\frac{\kappa_{p,k}}{\log^k(2)}\right),
  \]
  and we used $\log^k(s)\geq \log^k(2)$ for all $s\geq 2$.
  
  Consider an IMH chain $(X_t)_{t\geq 0}$ targeting $\pi$ with proposal $q$, started from $x_0= 3$. Fix any $t\geq 2$, define
  \[
  s_t:= t(\log t)^{k+1}, \qquad A_t:=(s_t,\infty).
  \]
  Then the probability of $A_t$ under $\pi$ is
  \[
  \bP_{X\sim \pi}(X\in A_t)=\frac{C_1}{t^{p-1}(\log t)^{(k+1)(p-1)}}.
  \]
  On the other hand, $X_t\in A_t$ implies that at least one of $x_0,Y_1,\ldots,Y_t$ falls in $A_t$, where $(Y_j)_{j\geq 1}$ are the proposals. For $t\geq 3$, we have $x_0 \notin A_t$, thus, by the union bound,
  \[
  \bP(X_t\in A_t)\leq t\bP_{Y\sim q}(Y\in A_t)
  \leq t\frac{C_2\log^k(s_t)}{s_t^p}.
  \]
  Let $t_1=t_1(k)$ be large enough that $t_1 \geq 3$ and $(k+1)\log\log t\leq \log t$ for all $t\geq t_1$. Then, for $t\geq t_1$,
  \[
  \log(s_t)=\log t +(k+1)\log\log t \leq 2\log t,
  \]
  so that $\log^k(s_t)\leq 2^k(\log t)^k$. Therefore, for all $t\geq t_1$,
  \begin{align*}
    \bP(X_t\in A_t)
    &\leq t\cdot \frac{C_2 2^k(\log t)^k}{t^p(\log t)^{p(k+1)}}
    = \frac{2^{k}C_2}{t^{p-1}(\log t)^{p(k+1)-k}}.
  \end{align*}
  Since
  \[
  p(k+1)-k-(k+1)(p-1)=1,
  \]
  we have
  \[
  \bP(X_t\in A_t)
  \leq \frac{2^{k}C_2}{\log t}\,\bP_{X\sim \pi}(X\in A_t).
  \]
  Define
  \begin{equation}\label{eq:t0def}
    t_0:= \left\lceil \exp\!\left(\frac{2^{k+1}C_2}{C_1}\right)\right\rceil \vee t_1.
  \end{equation}
  Then for all $t\geq t_0$,
  \[
  \bP(X_t\in A_t)\leq \frac{1}{2}\bP_{X\sim \pi}(X\in A_t).
  \]
  Therefore, for all $t\geq t_0$,
  \begin{align*}
    \left|P^t(x_0, \cdot)-\pi\right|_{\mathrm{TV}}
    &\geq \bP_{X\sim \pi}(X\in A_t)-\bP(X_t\in A_t)\\
    &\geq \frac{1}{2}\bP_{X\sim \pi}(X\in A_t)
    = \frac{C_1/2}{t^{p-1}(\log t)^{(k+1)(p-1)}}.
  \end{align*}
  This proves the claim.

  \subsection{Proofs of Section~\ref{sec:biasremoval_snis}\label{appx:proofs:biasremoval_snis}}

  \subsubsection{Proof of Proposition~\ref{prop:unbiasedsnis_has_finitemoments}}\label{appx:proof:unbiasedsnis_has_finitemoments}
  \begin{proof}
    Note that $\FNu$ is not bounded even if $|f|_{\infty}\le1$, because the sum in \eqref{eq:def:unbiasedsnis} can be arbitrarily large.
    By Minkowski's inequality, for any $s\geq 1$,
    \begin{equation*}
      \mathbb{E}\left[|\FNu|^s\right]^{1/s} \leq 
      \mathbb{E}\left[| \hat{F}(\mathbf{x}_0)|^s\right]^{1/s}
      + \mathbb{E}\left[\left|\sum_{t=1}^{\tau - 1} \{\hat{F}(\mathbf{x}_t) - \hat{F}(\mathbf{y}_{t-1})\}\right|^s\right]^{1/s}.
    \end{equation*}
    Furthermore, if $|f|_{\infty}\le1$ then $|\hat{F}(\mathbf{x})|\leq 1$ for all $\mathbf{x}$, thus 
    \begin{equation*}
      \mathbb{E}\left[|\FNu|^s\right]^{1/s} \leq 
      \mathbb{E}\left[| \hat{F}(\mathbf{x}_0)|^s\right]^{1/s}
      + 2 \mathbb{E}\left[\mathds{1}(\tau > 1) \left|\tau - 1\right|^s\right]^{1/s}.
    \end{equation*}
    Since $\hat{F}(\mathbf{x}_0) \leq 1$ almost surely, $\mathbb{E}[|\hat{F}(\mathbf{x}_0)|^s]^{1/s}$ is finite for all $s\geq 1$.
    The latter expectation is smaller than $\mathbb{E}[|\tau|^s]^{1/s}$. Thus, $\FNu$ has $s$ finite moments if $\tau$ has $s$ finite moments. 
    Note that $\FNu$ can have higher moments as well: for example, if $f$ is constant, then $\FNu$ is constant.
    
    Next, in order for $\tau$ to have $s\geq 1$ moments, we can resort to Proposition~\ref{prop:upb_meetingproba}. If Assumption~\ref{asm:pfinitemoments} holds with $p>s$,
    then $\mathbb{P}(\tau>t)\leq CN^{-1/2}t^{-p}$. We can then follow the proof of Proposition 8 in \citet{poisson22}, using Tonelli's theorem:
    \begin{align*}
      \mathbb{E}\left[\tau^s\right] &= \mathbb{E}\left[\int_0^\infty \mathds{1}(u < \tau)s u^{s-1}\mathrm{d}u\right]
      = \int_0^\infty s u^{s-1}\mathbb{P}(\tau > u) \mathrm{d}u\\
      &= \sum_{i=0}^\infty \mathbb{P}(\tau > i) \int_{i}^{i+1} s u^{s-1} \mathrm{d}u
      \; \leq \; \sum_{i=0}^\infty \mathbb{P}(\tau > i) s (i+1)^{s-1}.
    \end{align*}
    The sum is finite under the assumption $p>s$.
  \end{proof}

  \subsubsection{Proof of Proposition~\ref{prop:pimh}\label{appx:proof:thmpimh}}
  
  \begin{proof}[Proof of Proposition~\ref{prop:pimh}]
    
    We now consider the PIMH chain $(\bfx_t)_{t\geq 0}$, started from $\bar{q}$.
    The case $t=0$ corresponds to Theorem~\ref{thm: snis unbounded convergence}. Let $t\geq 1$.
    We can assume that $f$ is non-negative, using the same separate treatment of $f_+$ and $f_{-}$ as in the beginning of the proof of Theorem~\ref{thm: snis unbounded convergence}.
    
    We write
    \begin{equation*}
      \hat{F}^\circ: \bfx \mapsto \hat{F}(\bfx) - \pi(f) = \frac{\sum_{n=1}^N \omega(x_n) \{f(x_n)-\pi(f)\}}{\sum_{m=1}^N \omega(x_m)}.
    \end{equation*}
    We can write 
    \begin{align*}
      &\bE_{\bfx_0 \sim \bar{q}}\left[|\hat{F}(\mathbf{x}_t)-\pi(f)|^s\right]\\
      &= \int \bar{q}(\mathrm{d}\bfx_0)P(\bfx_0,\mathrm{d}\bfx_1)\ldots P(\bfx_{t-1},\mathrm{d}\bfx_t) |\hat{F}^\circ(\bfx_t)|^s\\
      &= \int \bar{q}(\mathrm{d}\bfx_0)P(\bfx_0,\mathrm{d}\bfx_1)\ldots P(\bfx_{t-1},\mathrm{d}\bfx_t) |\hat{F}^\circ(\bfx_t)|^s \{\mathds{1}(A_t) + \mathds{1}(A_t^c)\},
    \end{align*}
    where the event $A_t$ represents ``there was an acceptance in the first $t$ steps''. 
    
    In the event $A_t^c$, $\bfx_t = \bfx_0$ so 
    \begin{align*}
      & \int \bar{q}(\mathrm{d}\bfx_0)P(\bfx_0,\mathrm{d}\bfx_1)\ldots P(\bfx_{t-1},\mathrm{d}\bfx_t) |\hat{F}^\circ(\bfx_t)|^s \cdot \mathds{1}(A_t^c) \\
      &= \int \bar{q}(\mathrm{d}\bfx_0)P(\bfx_0,\mathrm{d}\bfx_1)\ldots P(\bfx_{t-1},\mathrm{d}\bfx_t) |\hat{F}^\circ(\bfx_0)|^s \cdot \mathds{1}(A_t^c) \\
      &\leq \int \bar{q}(\mathrm{d}\bfx_0) |\hat{F}^\circ(\bfx_0)|^s,
    \end{align*}
    by bounding the indicator by one, and we can use Theorem~\ref{thm: snis unbounded convergence}
    to obtain a bound in $N^{-s/2}$.
    
    Now we consider the case $A_t$.
    For $1\leq j\leq t$  define the events 
$$A_{j,t} := \{\bfx_{j-1}\neq \bfx_{j}=\bfx_{j+1} = \cdots = \bfx_{t}\},$$
    where $A_{j,t}$ is the event that there is a jump at time $j$ and no jump after that. 
    Then $A_{j,t} \cap A_{j',t}= \emptyset$ for $j\neq j'$ and 
$A_t = \cup_{j=1}^t A_{j,t}$. 
    We can decompose $\mathds{1}(A_t)$ into $\sum_{j=1}^t \mathds{1}(A_{j,t})$ to get
    \begin{align*}
      &\int \bar{q}(\mathrm{d}\bfx_0)P(\bfx_0,\mathrm{d}\bfx_1)\ldots P(\bfx_{t-1},\mathrm{d}\bfx_t) |\hat{F}^\circ(\bfx_t)|^s \mathds{1}(A_t)\\
      &= \sum_{j=1}^t \mathbb{E}_{\bfx_0\sim \bar{q}}\left[|\hat{F}^\circ(\bfx_t)|^s \mathds{1}(A_{j,t}) \right]
      = \sum_{j=1}^t \mathbb{E}_{\bfx_0\sim \bar{q}}\left[\mathbb{E}\left\{ \left.|\hat{F}^\circ(\bfx_j)|^s \mathds{1}(A_{j,t}) \right| \bfx_{j-1} \right\} \right].
    \end{align*}
    Conditional on $\bfx_{j-1}$,
    \begin{align*}
      & \int P(\bfx_{j-1}, d \bfx_{j})
      P(\bfx_{j}, d \bfx_{j+1}) \cdots P(\bfx_{t-1}, d \bfx_{t}) |\hat{F}^\circ(\bfx_{j})|^s \mathds{1}\{\bfx_{j-1}\neq \bfx_j = \cdots = \bfx_t\}\\
      &= \int P(\bfx_{j-1}, d \bfx_{j})
      | \hat{F}^\circ(\bfx_{j})|^s \mathds{1}\{\bfx_{j-1}\neq \bfx_j\} \\
      & \quad \cdot \int 
      P(\bfx_{j}, d \bfx_{j+1}) \cdots P(\bfx_{t-1}, d \bfx_{t})  \mathds{1}\{\bfx_j = \cdots = \bfx_t\}\\ 
      &= \int P(\bfx_{j-1}, d \bfx_{j})
      |\hat{F}^\circ(\bfx_{j})|^s \mathds{1}\{\bfx_{j-1}\neq \bfx_j\} r(\bfx_j)^{t-j}\\
      &= \int \bar{q}(d \zeta) \alpha(\bfx_{j-1},\zeta)
      | \hat{F}^\circ(\zeta)|^s  r(\zeta)^{t-j}.
    \end{align*}    
    We can then upper bound $\alpha$ by one, and upper bound $\sum_{j=1}^t r(\zeta)^{t-j}$ by $(1-r(\zeta))^{-1}$ to obtain
    \begin{align*}
      &\sum_{j=1}^t \mathbb{E}_{\bfx_0\sim \bar{q}, \zeta\sim \bar{q}}\left[ 
      \alpha(\bfx_{j-1},  \zeta) 
      |\hat{F}^\circ(\zeta)|^s r(\zeta)^{t-j}
      \right]\\
      &\leq  \mathbb{E}_{\zeta\sim \bar{q}}\left[ 
      |\hat{F}^\circ(\zeta)|^s \sum_{j=1}^t r(\zeta)^{t-j}
      \right]\\
      &\leq \mathbb{E}_{\zeta\sim \bar{q}}\left[ 
      |\hat{F}^\circ(\zeta)|^s  \frac{1}{1-r(\zeta)}
      \right].
    \end{align*}
    
    Next, split the expectation into the cases $\hat{Z}(\zeta) > 2$ and $\hat{Z}(\zeta) \leq 2$:
    \begin{align*}
      \mathbb{E}_{\zeta\sim \bar{q}}\left[ 
      |\hat{F}^\circ(\zeta)|^s \frac{1}{1-r(\zeta)}
      \right] &= \mathbb{E}_{\zeta\sim \bar{q}}\left[ 
       \frac{|\hat{F}^\circ(\zeta)|^s}{1-r(\zeta)} \mathds{1}(\hat{Z}(\zeta) \leq 2)
      \right] 
      + \mathbb{E}_{\zeta\sim \bar{q}}\left[ 
       \frac{|\hat{F}^\circ(\zeta)|^s}{1-r(\zeta)} \mathds{1}(\hat{Z}(\zeta) > 2)
      \right].
    \end{align*}
    
    When $\hat{Z}(\zeta) \leq 2$, since $r$ is increasing with $\hat{Z}$, we have $r(\zeta) \leq r(2)$ and thus $(1-r(\zeta))^{-1} \leq (1-r(2))^{-1}$. This yields:
    \begin{align*}
      \mathbb{E}_{\zeta\sim \bar{q}}\left[ 
      |\hat{F}^\circ(\zeta)|^s \frac{1}{1-r(\zeta)} \mathds{1}(\hat{Z}(\zeta) \leq 2)
      \right] &\leq \frac{1}{1-r(2)}\int \bar{q}(\mathrm{d}\zeta) |\hat{F}^\circ(\zeta)|^s \mathds{1}(\hat{Z}(\zeta) \leq 2)\\
      &\leq \frac{1}{1-r(2)}\mathbb{E}_{\mathbf{x}_0\sim \bar{q}}\left[|\hat{F}^\circ(\mathbf{x}_0)|^s\right].
    \end{align*}
    We obtain a bound in $N^{-s/2}$ using Theorem~\ref{thm: snis unbounded convergence}.
    
    When $\hat{Z}(\zeta) > 2$, from Lemma~\ref{lem:rupb}, we have $r(\zeta) \leq 1 - c_p(1/2)/(2\hat{Z}(\zeta))$. 
    Thus $1-r(\zeta) \geq c_p(1/2)/(2\hat{Z}(\zeta))$, and $(1-r(\zeta))^{-1} \leq (2/c_p(1/2))\hat{Z}(\zeta)$. This yields:
    \begin{align*}
      \mathbb{E}_{\zeta\sim \bar{q}}\left[ 
      |\hat{F}^\circ(\zeta)|^s \frac{1}{1-r(\zeta)} \mathds{1}(\hat{Z}(\zeta) > 2)
      \right] \leq \frac{2}{c_p(1/2)} \int \bar{q}(\mathrm{d}\zeta) |\hat{F}^\circ(\zeta)|^s \hat{Z}(\zeta) \mathds{1}(\hat{Z}(\zeta) > 2).
    \end{align*}
    Since we assume $f \geq 0$, we can use the inequality \eqref{eq:upb_max_plus_qwf}:
    \begin{align*}
      |\hat{F}^\circ(\zeta)|^s \leq \left(\max_{1\leq i \leq N} f(\zeta_i) + q(\omega f) \right)^s,
    \end{align*}
    from which we obtain
    \begin{align*}
      \mathbb{E}_{\zeta\sim \bar{q}}&\left[ 
      |\hat{F}^\circ(\zeta)|^s \frac{1}{1-r(\zeta)} \mathds{1}(\hat{Z}(\zeta) > 2)
      \right] \\
      &\leq \frac{2}{c_p(1/2)}\mathbb{E}_{\zeta\sim \bar{q}}\left[ \left(\max_{1\leq i \leq N} f(\zeta_i) + q(\omega f) \right)^s\cdot  \hat{Z}(\zeta) \mathds{1}(\hat{Z}(\zeta) > 2)\right]\\
      &\leq \frac{2^{s}}{c_p(1/2)}\left(\mathbb{E}_{\zeta\sim \bar{q}}\left[ \left(\max_{1\leq i \leq N} f(\zeta_i)\right)^s\cdot  \hat{Z}(\zeta) \mathds{1}(\hat{Z}(\zeta) > 2)\right] + q(\omega f)^s \cdot \mathbb{E}_{\zeta\sim \bar{q}}\left[\hat{Z}(\zeta) \mathds{1}(\hat{Z}(\zeta) > 2)\right]\right).
    \end{align*}
    Using the facts that $\hat{Z}(\zeta)\leq 2(\hat{Z}(\zeta)-1)$ when $\hat{Z}(\zeta) > 2$ and $\mathds{1}(\hat{Z}(\zeta)\geq 2)\leq |\hat{Z}(\zeta) - 1|^{p-1}$, we obtain via Proposition~\ref{prop:zhatconcentration}:
    \begin{align*}
      q(\omega f)^s\cdot \mathbb{E}_{\zeta\sim \bar{q}}\left[   \hat{Z}(\zeta) \mathds{1}(\hat{Z}(\zeta) > 2)\right]
      &\leq 2q(\omega f)^s \mathbb{E}_{\zeta\sim \bar{q}}\left[|\hat{Z}(\zeta) - 1|^p\right] \\
      &\leq \frac{2M(p)^p}{N^{p/2}}q(\omega f)^s.
    \end{align*}
    For the remaining term, using Hölder's inequality yields:
    \begin{align*}
      \mathbb{E}_{\zeta\sim \bar{q}}&\left[ \left(\max_{1\leq i \leq N} f(\zeta_i)\right)^s\cdot  \hat{Z}(\zeta) \mathds{1}(\hat{Z}(\zeta) > 2)\right]\\
      &\leq 2\mathbb{E}_{\zeta\sim \bar{q}}\left[ \left(\max_{1\leq i \leq N} f(\zeta_i)\right)^r\right]^{s/r}\cdot \mathbb{E}_{\zeta\sim \bar{q}}\left[ \left|\hat{Z}(\zeta)-1\right|^{\frac{r}{r-s}}\mathds{1}(\hat{Z}(\zeta) > 2) \right]^{1-s/r}.
    \end{align*}
    Under the assumptions, with $s\leq \frac{pr}{p+r+2}$, we have:
    \begin{align*}
      \frac{r}{r-s}\leq \frac{p+r+2}{r+2}=1+\frac{p}{r+2} \leq p,
    \end{align*}
    where the inequality holds since $r \geq 2$ by assumption.
    This gives us:
    \begin{align*}
      \mathbb{E}_{\zeta\sim \bar{q}}\left[ \left|\hat{Z}(\zeta)-1\right|^{\frac{r}{r-s}}\mathds{1}(\hat{Z}(\zeta) > 2) \right]^{1-s/r}
      &\leq \left(\frac{M(p)^p}{N^{p/2}}\right)^{1-s/r}.
    \end{align*}
    Finally we use 
    the fact that $\max\{a_1,\ldots,a_n\}\leq a_1+\cdots+a_n$ for non-negative $a_i$ to derive
    \begin{align*}
      &\mathbb{E}_{\zeta\sim \bar{q}}\left[ \left(\max_{1\leq i \leq N} f(\zeta_i)\right)^r\right]\\
      &=\mathbb{E}_{\zeta\sim \bar{q}}\left[ \max_{1\leq i \leq N} f(\zeta_i)^r\right]
      \leq \mathbb{E}_{x\sim q}\left[f(x)^r N\right], 
    \end{align*}
    so that 
    \begin{align*}
      &\mathbb{E}_{\zeta\sim \bar{q}}\left[ \left(\max_{1\leq i \leq N} f(\zeta_i)\right)^s\cdot  \hat{Z}(\zeta) \mathds{1}(\hat{Z}(\zeta) > 2)\right]\\
      &\leq 2 \left(q(f^r) N\right)^{s/r} \cdot \left(\frac{M(p)^p}{N^{p/2}}\right)^{1-s/r}.
    \end{align*}
    We end up with an exponent of $N$ equal to $s/r -(p/2)(r-s)/r$,
    which under the assumptions is less than $-s/2$, as detailed in the proof of Theorem~\ref{thm: snis unbounded convergence}.
    Therefore, we obtain an upper bound in $N^{-s/2}$ on all terms.
    
  \end{proof}
  
  \begin{remark}
    Under Assumption~\ref{asm:abscontinuitypositivity}, PIMH converges in total variation.
    Thus, $(\mathbf{x}_t)$ converges weakly to $\bar{\pi}$. 
    We consider the transformation
$\bfx \mapsto |\hat{F}^{\circ}(\bfx)|^q$ and Fatou's lemma as in Theorem 3.4 of \citet{billingsleyconvergence},
    to obtain 
\[\mathbb{E}_{\bar{\pi}}[|\hat{F}^\circ(\bfx)|^q] \leq \lim \inf_t \mathbb{E}[|\hat{F}^\circ(\bfx_t)|^q].\]
    Thus, the bound of Proposition~\ref{prop:pimh}, valid for all $t\geq 0$, applies also to the $s$-th moment
    of $\hat{F}(\bfx) - \pi(f)$ under $\bar{\pi}$.
  \end{remark}
  
  \subsubsection{Proof of Proposition~\ref{prop:unbiasedsnis_has_finitemoments_unbounded}\label{appx:proof:momentsusnis_unbounded}}
  
  \begin{proof}[Proof of Proposition~\ref{prop:unbiasedsnis_has_finitemoments_unbounded}]
    We start as in the proof of Proposition~\ref{prop:unbiasedsnis_has_finitemoments} in Appendix~\ref{appx:proof:unbiasedsnis_has_finitemoments},
    and employ Theorem~\ref{thm: snis unbounded convergence} for the moments of the error of IS
    with unbounded functions.
    Regarding the bias cancellation term, 
    \begin{align*}
      \text{BC}  &=   \sum_{t=1}^{\infty} \Delta_t \mathds{1}(\tau > t),
    \end{align*}
    we use Minkowski with exponent $s \geq 1$:
    \begin{align}\label{eq:EBCs_by_Minkowski}
      \mathbb{E}\left[|\text{BC}|^s\right]^{1/s}  &\le \sum_{t=1}^{\infty} \mathbb{E}\left[|\Delta_t|^s \mathds{1}(\tau > t)\right]^{1/s}.
    \end{align}
    Next, for each time $t$, using H\"older's inequality with an arbitrary $\kappa>1$,
    \begin{equation*}
      \mathbb{E}\left[|\Delta_t|^s \mathds{1}(\tau > t)\right] \leq \mathbb{E}\left[|\Delta_t|^{s\kappa}\right]^{1/\kappa} \mathbb{P}(\tau > t)^{(\kappa-1)/\kappa}.
    \end{equation*}
    For the sum over $t$ in \eqref{eq:EBCs_by_Minkowski} to be finite,
    and using Proposition~\ref{prop:upb_meetingproba} to bound $\mathbb{P}(\tau > t)$, we have the condition on $\kappa$ and $s$,
\[-\frac{p(\kappa - 1)}{s\kappa} < -1 \quad \Leftrightarrow \quad \kappa > p / (p-s).\]
    To establish the finiteness of $\mathbb{E}\left[|\Delta_t|^{s\kappa}\right]$
    we can resort to Proposition~\ref{prop:pimh} if $s\kappa$ satisfies the condition
\[s\kappa \leq \frac{pr}{p+r+2}.\]
    We can find such $\kappa$ if 
\[\frac{ps}{p-s} < \frac{pr}{p+r+2}.\]
  \end{proof}
  
  \subsubsection{Proof of Proposition~\ref{prop:dsnis:mseequiv}\label{appx:proof:dsnis:mseequiv}}
  
  \begin{proof}[Proof of Proposition~\ref{prop:dsnis:mseequiv}]
    We follow the proof of Proposition~\ref{prop:unbiasedsnis_has_finitemoments_unbounded}, 
    with $s=2$. We 
    thus have a exponent $\kappa>1$ that must satisfy 
    $\kappa > p/(p-2)$, and $2\kappa \leq pr/(p+r+2)$. We choose any number $\kappa$
    strictly between $p/(p-2)$ and $pr/(2p+2r+4)$, which is possible by assumption, since
    \[1<\frac{p}{p-2} < \frac{pr}{2p+2r+4} \Leftrightarrow 2p+4r+4 < rp.\]
    For that $\kappa$, we can apply Proposition~\ref{prop:pimh} to bound $\mathbb{E}[|\Delta_t|^{2\kappa}]^{1/\kappa}$
    by a constant times $N^{-1}$.
    Meanwhile, the sum $\sum_{t=1}^{\infty} \mathbb{P}(\tau > t)^{(\kappa-1)/(\kappa s)}$ 
    is finite using Proposition~\ref{prop:upb_meetingproba}, and is of the form $C N^{-a}$ for some positive $a$,
    namely $a = (\kappa-1)/(2\kappa s)$.
    Thus, $\mathbb{E}{|\text{BC}|^2}$ can be bounded by a constant times $N^{-1-a}$ for some positive $a$,
    and finds itself negligible in front of the MSE of IS as $N\to\infty$.
  \end{proof}
  
  \subsubsection{Proof of Proposition~\ref{prop:inverse-CUIS} on the unbiased estimation of $1/Z$}\label{appx:proof:inverse-uis}

  The proof of Proposition~\ref{prop:inverse-CUIS} relies on the inverse moment bound of Proposition~\ref{prop:inversemoments}, which provides a finite bound on $\bE[\ZN(\bfx)^{-r}]$ for all $r \geq 1$ with no constraint on~$r$ relative to~$p$. We first establish sharper SNIS and PIMH moment bounds for the test function $f = 1/\omega_u$.
  
  \begin{prop}[SNIS moments of $1/\ZN$]\label{prop:snis-inverse}
    Assume that $q(\omega^p) < \infty$ for some $p \geq 2$ and $q(\omega^{-\eta}) < \infty$ for some $\eta > 0$. Then, for any $1 \leq s < p$ and $N \geq sp/((p-s)\eta)$, there exists $C > 0$ such that
    \begin{equation}\label{eq:snis-inverse-bound}
      \bE_{\bar{q}}\!\left[\left|\frac{1}{\ZN(\bfx)} - \frac{1}{Z}\right|^s\right] \leq C\, N^{-s/2}.
    \end{equation}
  \end{prop}
  
  \begin{proof}
    Using $|1/\ZN - 1/Z|^s = |\ZN - Z|^s / (Z^s \ZN^s)$ and H\"older's inequality with exponents $\kappa = p/s$ and $\kappa' = p/(p-s)$:
\[
    \bE\!\left[\frac{|\ZN - Z|^s}{Z^s \ZN^s}\right]
    \leq Z^{-s} \cdot \bE\!\left[|\ZN - Z|^{p}\right]^{s/p} \cdot \bE\!\left[\ZN^{-sp/(p-s)}\right]^{(p-s)/p}.
\]
    The first factor is $O(N^{-s/2})$ by Proposition~\ref{prop:zhatconcentration}. The second is bounded by a constant via Proposition~\ref{prop:inversemoments} with $r = sp/(p-s)$, for $N \geq sp/((p-s)\eta)$.
  \end{proof}
  
  \begin{prop}[PIMH moments of $1/\ZN$]\label{prop:pimh-inverse}
    Under the same conditions as Proposition~\ref{prop:snis-inverse}, there exists $C > 0$ such that for all $t \geq 0$:
    \begin{equation}\label{eq:pimh-inverse-bound}
      \bE_{\bfx_0\sim\bar{q}}\!\left[\left|\frac{1}{\ZN(\bfx_t)} - \frac{1}{Z}\right|^s\right] \leq C\, N^{-s/2},
    \end{equation}
    where $(\bfx_t)_{t\geq 0}$ is the PIMH chain initialized at $\bfx_0 \sim \bar{q}$.
  \end{prop}
  
  \begin{proof}[Proof of Proposition~\ref{prop:pimh-inverse}]
    The case $t = 0$ is Proposition~\ref{prop:snis-inverse}. For $t \geq 1$, we follow the proof of Proposition~\ref{prop:pimh} in Appendix~\ref{appx:proof:thmpimh}. The decomposition into the rejection event ($A_t^c$), small-weight regime ($\ZN(\zeta) \leq 2$), and large-weight regime ($\ZN(\zeta) > 2$) carries over. The $A_t^c$ and small-weight contributions are each bounded by a constant times $\bE_{\bar{q}}[|1/\ZN - 1/Z|^s] \leq C N^{-s/2}$ by Proposition~\ref{prop:snis-inverse}.
    
    The key simplification occurs in the large-weight regime. For $f = 1/\omega_u$, we have $|\hat{F}_N^\circ(\zeta)|^s = |\ZN(\zeta) - 1/Z|^s / (Z \cdot \ZN(\zeta))^s$, and using Lemma~\ref{lem:rupb} with $\theta = 1/2$, $(1-r(\zeta))^{-1} \leq 2\ZN(\zeta)/c_p(1/2)$ when $\ZN(\zeta) > 2$. The product becomes
\[
    \frac{|\ZN(\zeta) - 1/Z|^s}{Z^s \ZN(\zeta)^s} \cdot \frac{2\ZN(\zeta)}{c_p(1/2)} = \frac{2}{c_p(1/2)} \cdot \frac{|\ZN(\zeta) - 1/Z|^s}{Z^s \ZN(\zeta)^{s-1}},
\]
    where one power of $\ZN$ cancels. This is the decisive simplification afforded by the inverse weight as a test function: for a general test function~$f$, the analogous step requires H\"older's inequality to separate $|\hat{F}_N^\circ|^s$ from $\ZN \cdot \mathds{1}(\ZN > 2)$, introducing the test function moment $q(|f|^r)$ and the constraint $s \leq pr/(p+r+2)$. For $f = 1/\omega_u$, the partial cancellation leaves only inverse moments of $\ZN$, controlled by Proposition~\ref{prop:inversemoments}.
    
    Applying H\"older's inequality with exponents $p/s$ and $p/(p-s)$:
\[
    \bE\!\left[\frac{|\ZN - 1/Z|^s}{Z^s \ZN^{s-1}}\right]
    \leq Z^{-s} \cdot \bE[|\ZN - 1/Z|^{p}]^{s/p} \cdot \bE[\ZN^{-(s-1)p/(p-s)}]^{(p-s)/p}.
\]
    The first factor is $O(N^{-s/2})$ by Proposition~\ref{prop:zhatconcentration}, and the second is bounded by a constant via Proposition~\ref{prop:inversemoments} with $r = (s-1)p/(p-s)$. Combining the three contributions ($A_t^c$, small weight, large weight) yields the bound.
  \end{proof}

  \begin{proof}[Proof of Proposition~\ref{prop:inverse-CUIS}]
    \emph{Finite $s$-th moments under $p > s$.}
    We follow the proof of Proposition~\ref{prop:unbiasedsnis_has_finitemoments_unbounded} in Appendix~\ref{appx:proof:momentsusnis_unbounded} with $f = 1/\omega_u$. By Minkowski's inequality, it suffices to bound $\bE[|\text{BC}|^s]^{1/s}$, where $\text{BC} = \sum_{t=1}^{\infty} \Delta_t \mathds{1}(\tau > t)$. Applying H\"older's inequality with exponent $\kappa > 1$ and conjugate $\kappa' = \kappa/(\kappa-1)$:
    \begin{equation}\label{eq:uis-inv-holder}
      \bE[|\Delta_t|^s \mathds{1}(\tau > t)]
      \leq \bE[|\Delta_t|^{s\kappa}]^{1/\kappa} \cdot \bP(\tau > t)^{(\kappa-1)/\kappa}.
    \end{equation}
    For general $f$, bounding $\bE[|\Delta_t|^{s\kappa}]$ requires $s\kappa \leq pr/(p+r+2)$, imposing an upper bound on $\kappa$. For $f = 1/\omega_u$, we show that a constant bound on $\bE[|\Delta_t|^{s\kappa}]$ holds for \emph{all} $\kappa$, so that $\kappa$ can be chosen freely.
    
    By the triangle inequality,
    \begin{equation}\label{eq:uis-inv-triangle}
      |\Delta_t|
      = \left|\frac{1}{\ZN(\bfx_t)} - \frac{1}{\ZN(\bfy_{t-1})}\right|
      \leq \frac{1}{\ZN(\bfx_t)} + \frac{1}{\ZN(\bfy_{t-1})},
    \end{equation}
    so by Minkowski's inequality in $L^{s\kappa}$,
    \begin{equation}\label{eq:uis-inv-mink}
      \bE[|\Delta_t|^{s\kappa}]^{1/(s\kappa)}
      \leq \bE\!\left[\ZN(\bfx_t)^{-s\kappa}\right]^{1/(s\kappa)}
      + \bE\!\left[\ZN(\bfy_{t-1})^{-s\kappa}\right]^{1/(s\kappa)}.
    \end{equation}
    We now show that $\bE[\ZN(\bfx_t)^{-s\kappa}] \leq \bE_{\bar{q}}[\ZN(\bfx)^{-s\kappa}]$ for all $t \geq 0$, by induction on the PIMH chain. The base case $t = 0$ is immediate since $\bfx_0 \sim \bar{q}$. For the inductive step, recall that the Metropolis--Hastings update at time~$t$ draws a proposal $\boldsymbol{\zeta}_t \sim \bar{q}$ independently and accepts it with probability $\min(1, \ZN(\boldsymbol{\zeta}_t)/\ZN(\bfx_{t-1}))$. Conditioning on $\bfx_{t-1}$ and $\boldsymbol{\zeta}_t$:
    \begin{align}\label{eq:uis-inv-onestep}
      \bE\!\left[\ZN(\bfx_t)^{-s\kappa} \mid \bfx_{t-1}, \boldsymbol{\zeta}_t\right]
      &= \min\!\left(1, \frac{\ZN(\boldsymbol{\zeta}_t)}{\ZN(\bfx_{t-1})}\right) \ZN(\boldsymbol{\zeta}_t)^{-s\kappa} \nonumber\\
      &\quad + \left(1 - \min\!\left(1, \frac{\ZN(\boldsymbol{\zeta}_t)}{\ZN(\bfx_{t-1})}\right)\right) \ZN(\bfx_{t-1})^{-s\kappa}.
    \end{align}
    This is a convex combination. If $\ZN(\boldsymbol{\zeta}_t) \geq \ZN(\bfx_{t-1})$, the proposal is accepted with certainty and the expression equals $\ZN(\boldsymbol{\zeta}_t)^{-s\kappa}$. If $\ZN(\boldsymbol{\zeta}_t) < \ZN(\bfx_{t-1})$, then $\ZN(\bfx_{t-1})^{-s\kappa} \leq \ZN(\boldsymbol{\zeta}_t)^{-s\kappa}$, so the convex combination is also bounded by $\ZN(\boldsymbol{\zeta}_t)^{-s\kappa}$. In both cases,
    \begin{equation}\label{eq:uis-inv-onestep-bound}
      \bE\!\left[\ZN(\bfx_t)^{-s\kappa} \mid \bfx_{t-1}, \boldsymbol{\zeta}_t\right]
      \leq \ZN(\boldsymbol{\zeta}_t)^{-s\kappa}.
    \end{equation}
    Since $\boldsymbol{\zeta}_t \sim \bar{q}$ independently of $\bfx_{t-1}$, taking expectations yields
    \begin{equation}\label{eq:uis-inv-induction}
      \bE\!\left[\ZN(\bfx_t)^{-s\kappa}\right]
      \leq \bE_{\bar{q}}\!\left[\ZN(\bfx)^{-s\kappa}\right]
      \leq q(\omega^{-\eta})^{s\kappa/\eta},
    \end{equation}
    where the last inequality is Proposition~\ref{prop:inversemoments} with $r = s\kappa$, for $N \geq s\kappa/\eta$. The same bound holds for $\bfy_{t-1}$. Combining~\eqref{eq:uis-inv-mink} and~\eqref{eq:uis-inv-induction}:
    \begin{equation}\label{eq:uis-inv-delta-constant}
      \bE[|\Delta_t|^{s\kappa}]^{1/(s\kappa)}
      \leq 2\,q(\omega^{-\eta})^{1/\eta}
      =: \bar{C}_\eta < \infty,
    \end{equation}
    for all $\kappa$ with $N \geq s\kappa/\eta$, with \emph{no constraint} on $s\kappa$ relative to $p$.
    
    Substituting~\eqref{eq:uis-inv-delta-constant} and the meeting-time bound $\bP(\tau > t) \leq C/(N^{1/2} t^p)$ of Proposition~\ref{prop:upb_meetingproba} into~\eqref{eq:uis-inv-holder}, and summing over $t$:
\[
    \sum_{t=1}^{\infty} \bE[|\Delta_t|^s \mathds{1}(\tau > t)]^{1/s}
    \leq \bar{C}_\eta \sum_{t=1}^{\infty} \left(\frac{C}{N^{1/2} t^p}\right)^{(\kappa-1)/(s\kappa)}.
\]
    The series converges if and only if $p(\kappa-1)/(s\kappa) > 1$, i.e.\ $\kappa > p/(p-s)$. Since $p > s$, we have $p/(p-s) < \infty$, and since~\eqref{eq:uis-inv-delta-constant} imposes no upper bound on $\kappa$, such a choice exists. This gives $\bE[|\text{BC}|^s] < \infty$.
    
    \medskip
    
    \emph{Rate $N^{-s/2}$ under $p > 2s$.}
    To upgrade from finiteness to the rate $N^{-s/2}$, replace the constant bound~\eqref{eq:uis-inv-delta-constant} by the PIMH bound of Proposition~\ref{prop:pimh-inverse}: for $s\kappa < p$,
    \begin{equation}\label{eq:uis-inv-delta-pimh}
      \bE[|\Delta_t|^{s\kappa}]^{1/(s\kappa)} \leq C_1 N^{-1/2}.
    \end{equation}
    Two constraints on $\kappa$ must now hold simultaneously:
    \begin{equation}\label{eq:uis-inv-compatibility}
      \frac{p}{p-s} < \kappa < \frac{p}{s}.
    \end{equation}
    The first ensures the convergence of the series; the second ensures that the PIMH bound applies. This interval is non-empty if and only if $p/(p-s) < p/s$, i.e.\ $p > 2s$. Choosing such $\kappa$ and separating the $N$-dependent factors from the convergent $t$-series gives $\bE[|\text{BC}|^s] = o(N^{-s/2})$, and the MSE equivalence for $s = 2$ follows from~\eqref{eq:MSE_unbiasedSNIS_decompCS}.
  \end{proof}
  
  \section{Alternative unbiased importance sampling estimators\label{app:other_uis}}

Here we recall two other approaches to unbiased importance sampling mentioned in the beginning of Section~\ref{sec:biasremoval_snis}, which are not based on coupled PIMH chains. These alternatives are implemented in the experiments of Section~\ref{sec:applications}.
  
  \subsection{MLMC-UIS}

  One approach to removing the bias from a sequence of consistent estimates, such as SNIS estimates as $N\to\infty$,
  is based on multilevel Monte Carlo and randomization; we refer to it as MLMC-UIS.
This is the approach described in \citet{blanchet2015unbiasedmultilevel,wang2023unbiased},
and specifically applied to SNIS in \citet[][Section 4]{ShiCornish}.
Let $\rho$ in $(0,1)$ be the parameter of a Geometric distribution, with 
$\mathbb{P}(R = k) = (1-\rho)^k \rho$ for $k\geq 0$, and $\mathbb{P}(R\geq k) = (1-\rho)^k$.
Introduce the following notation:
\begin{itemize}
  \item For a sample $x_1,\ldots,x_{n}$, with $n$ even, we write 
  $E(x_1,\ldots,x_n)$ for the sample $(x_{2i})_{i=1}^{n/2}$ of the even-indexed variables, and $O(x_1,\ldots,x_n)$ for the sample $(x_{2i-1})_{i=1}^{n/2}$ of the odd-indexed variables.
  \item We write $\hat{F}(x_1,\ldots,x_n)$ for the SNIS estimator of $\pi(f)$ based on the sample $x_1,\ldots,x_n$, and $\hat{F}\circ E (x_1,\ldots,x_n)$ for the SNIS estimator of $\pi(f)$ based on the even-indexed variables, and $\hat{F}\circ O (x_1,\ldots,x_n)$ for the SNIS estimator of $\pi(f)$ based on the odd-indexed variables.
\end{itemize}
The MLMC-UIS estimator is obtained with the following procedure:
\begin{enumerate}
  \item Choose an initial level $\ell \geq 0$. You can e.g. take $\ell = 1$.
  \item Draw $R \sim \text{Geometric}(\rho)$.
  \item Draw $x_1,\ldots,x_{2^{\ell+R+1}}$ i.i.d.\ samples from $q$. You may compute once and for all $\omega(x_i)$ and $f(x_i)$ for $i=1,\ldots,2^{\ell+R+1}$, as these will be used in the different SNIS estimators computed below.
  \item Compute
  \begin{align*}
    S_{R} &= \hat{F}(x_1,\ldots,x_{2^{\ell+R+1}}),\\
    S_{R}^O &= \hat{F}\circ O (x_1,\ldots,x_{2^{\ell+R+1}}),\\
    S_{R}^E &= \hat{F}\circ E (x_1,\ldots,x_{2^{\ell+R+1}}).
  \end{align*}
  \item Compute
  \[
    \Delta_R = S_{R} - \frac{S_{R}^O + S_{R}^E}{2},\]
  which expectation is equal the bias difference between SNIS estimators with $2^{\ell+R+1}$ and $2^{\ell+R}$ samples.
  \item Return either the ``single sample'' or the ``Russian roulette'' estimator, defined as follows:
  \begin{align*}
    \mathrm{SS} &= \frac{1}{2^{R+1}} \sum_{n=1}^{2^{R+1}} \hat{F}(x_{(n-1)2^\ell + 1},\ldots,x_{n 2^{\ell}}) + \frac{\Delta_R}{(1-\rho)^{R} \rho },\\
    \mathrm{RR} &=  \frac{1}{2^{R+1}} \sum_{n=1}^{2^{R+1}} \hat{F}(x_{(n-1)2^\ell + 1},\ldots,x_{n 2^{\ell}})  + \sum_{r=0}^R \frac{\Delta_r}{(1-\rho)^{r}}.
  \end{align*}
\end{enumerate} 
The first term in both the SS and RR estimators represents the average of many SNIS estimator of $\pi(f)$
each based on $2^{\ell}$ samples. This is biased for $\pi(f)$. The second terms in SS and RR are bias correction terms.

\begin{remark}
  The tuning parameters are $\ell$, which can be set to e.g. one, and the parameter $\rho$ of the Geometric distribution, which controls the tail of the  distribution of $R$, and thus both the cost and variance of the estimator.
\end{remark}

Theorem 3 of \citet{ShiCornish} provides conditions under which the variance of the SS and RR estimators is finite, and the expected cost is finite.
There are two sets of conditions:
\begin{itemize}
  \item 
Assume that $q(\omega^a + |f|^b)<\infty$ with $a>4$ and $b>2a/(a-4)$.
Then set $\alpha = 1-2/b$.
\item 
Assume that  $q(\omega^a + |f|^b)<\infty$ with $2< a \leq 4$ and $b>(2a+4)/(a-2)$,
or $a>4, b \in ((2a+4)/(a-2), 2a/(a-4))$. Then set $\alpha = a/2 - (a+2)/b - 1$.
\end{itemize} 
Then the estimator has finite expected cost and finite variance for $\rho \in (2^{-1}, 1 - 2^{-1-\alpha})$. The larger the $\alpha$, the larger the range of $\rho$ for which the estimator has finite variance and finite expected cost. 

\begin{remark} If the test function $f$ is chosen to be $x\mapsto \omega(x)^{-1}$, where $Z = \int \omega(x) \, dx$, then the unbiased MLMC estimators are unbiased estimators of $1/Z$.
\end{remark}

\begin{remark} In the Exponential example considered in Section~\ref{subsec:applications:inverse_Z},
  with $p = 3$, we are in the case $2 < a \leq 4$. Considering the function $x\mapsto \omega(x)^{-1}$,
  we can take $b$ arbitrarily large. Therefore, we select $\alpha = 1/2$, define $\rho_{\max} = 1 - 2^{-1-\alpha} = 1 - 2^{-3/2} \approx 0.6464$, and choose $\rho = 95\% \rho_{\max}$ in the experiments to reflect a slightly conservative but well-informed choice.
\end{remark}

\subsection{Taylor-UIS}

Another approach to removing the bias of SNIS is based on Taylor expansions and randomization, and was proposed in \citet{blanchet2015unbiasedtaylor}, with useful improvements and analyses in \citet{chopin_crucinio_singh_2025}. In these articles,
the goal is to estimate a smooth function of the expectation of a random variable that we can sample from. In the context of importance sampling, the smooth function is the inverse function, and we apply the method of \citet{chopin_crucinio_singh_2025} to estimate $1/Z$ without bias. We can then multiply the resulting estimator by an unbiased estimator of $q(f\omega)$ to obtain an unbiased estimator of $\pi(f)$. We refer to this approach as Taylor-UIS. 

From $g:m\mapsto 1/m$, the starting point is the identity
\begin{equation}\label{eq:ccs_identity}
  g(m) = \sum_{k=0}^\infty \gamma_k \left(\frac{m}{x_0} - 1\right)^k,
\end{equation}
where $x_0$ is a tuning parameter, and the coefficients $\gamma_k$ are given by
\begin{align*}
  \forall k \geq 1 \quad \gamma_k &= \frac{g^{(k)}(x_0)}{k!} x_0^k = \frac{(-1)^{k}}{x_0}.
\end{align*}
The basic idea is that:
\begin{enumerate}
  \item The infinite sum in \eqref{eq:ccs_identity} can be estimated without bias with a truncated sum of reweighted terms. 
  Introduce $R \sim \text{Geometric}(\rho)$, such that $\bP(R = k) = (1-\rho)^k\rho$ for $k\geq 0$, where $\rho\in(0,1)$. Note that $\mathbb{P}(R\geq k) = (1-\rho)^k$.
  \item 
Each term $(m/x_0 - 1)^k$ in \eqref{eq:ccs_identity} can be unbiasedly estimated using draws from $Y$ with expectation $m$. The most natural estimator is \citep{blanchet2015unbiasedtaylor}:
\begin{align}\label{eq:ccs_urk}
  U_{r,k}&= \prod_{i=1}^k \left(\frac{Y_i}{x_0} - 1\right).
\end{align}
However, a better estimator is as follows: if one has access to $r$ random variables $Y_1,\ldots,Y_r$ with $r>k$, then 
one can average over different possible products of $k$ distinct variables. \citet{chopin_crucinio_singh_2025}
consider averaging over $r$ circular shifts of the variables:
\begin{align}\label{eq:ccs_urkC}
  U^C_{r,k}&= \frac{1}{r}\left\{ \sum_{j=0}^{r-1} \prod_{i=1}^{k} \left(\frac{Y_{j+i}}{x_0} - 1\right)\right\},
\end{align}
where the index $j+i$ is taken modulo $r$, i.e. $Y_{r+1} = Y_1$, $Y_{r+2} = Y_2$, etc.
\end{enumerate}
Combining the above considerations, an unbiased estimator of $1/m$ is
\begin{equation}\label{eq:ccs_estimator}
  R\sim \text{Geometric}(\rho), \quad Y_1,\ldots,Y_R \sim \mathcal{L}(m,\sigma^2), \quad \hat{I} := \sum_{k=0}^R \frac{(-1)^{k} U^C_{R,k}}{x_0 \mathbb{P}(R \geq k)}.
\end{equation}
where $\mathcal{L}(m,\sigma^2)$ is the distribution of $Y_i$ with expectation $m$ and variance $\sigma^2$.
In our setting each $Y_i$ would correspond to $\hat{Z}=N^{-1}\sum_{n=1}^N \omega(x_n)$ where $x_n$ are i.i.d.\ samples from $q$, and thus we need to choose a tuning parameter $N$. The two additional parameters are $x_0$ and $\rho$.
\citet{chopin_crucinio_singh_2025} provide guidance on tuning parameters in their Section 3. Introduce the following notation:
\begin{align*}
  &\beta_0 = |m/x_0 - 1|, \sigma^2 = \mathbb{V}[Y_i], \\
  &\hat{m} = \frac{1}{n_0} \sum_{i=1}^{n_0} Y_i, \quad \hat{\sigma}^2 = \frac{1}{n_0-1} \sum_{i=1}^{n_0} (Y_i - \hat{m})^2,
\end{align*}
where $n_0$ is the length of a pilot sample of $Y_i$'s, used to estimate $m$ and $\sigma^2$. 
\citet{chopin_crucinio_singh_2025} choose $n_0 = 10$.
Then define
\begin{align*}
  &\hat{x_0^\star} = (\hat{m}^2 + \hat\sigma^2)/ \hat{m},\\
  & x_{0,\text{ub}} = \text{bootstrap percentile at level } (1 - \alpha) \text{ of } (\hat{m}^2 + \hat\sigma^2)/(2\hat{m}),\\
  & x_0 = \max(\hat{x_0^\star}, x_{0,\text{ub}}),
\end{align*}
where the bootstrap is performed by resampling with replacement from the pilot sample of $Y_i$'s. 
By default we follow \citet{chopin_crucinio_singh_2025} and use 100 bootstrap samples and $\alpha = 0.01$.
Having defined $x_0$, we can define
\begin{equation*}
  \hat{\beta}^2 = \frac{\hat\sigma^2}{x_0^2} + \left(\frac{\hat{m}}{x_0} - 1\right)^2,
\end{equation*}
and 
\begin{equation*}
  \rho = \min(1-\hat{\beta}^2, 1/(n_0+1)).
\end{equation*}
See Section 3.2 of \citet{chopin_crucinio_singh_2025} for details on the choice of $\rho$.
Having obtained $\rho$ and $x_0$ from $n_0$ samples of $X_i$, we can then run the $\hat{I}$ estimator as in \eqref{eq:ccs_estimator}:
\begin{enumerate}
  \item Draw $R \sim \text{Geometric}(\rho)$.
  \item Draw $R$ i.i.d.\ samples $Y_1,\ldots,Y_R$.
  \item Compute $U^C_{R,k}$ for $k=1,\ldots,R$ using \eqref{eq:ccs_urkC}.
  \item Compute $\hat{I} = x_0^{-1} + \sum_{k=1}^R \frac{(-1)^{k} U^C_{R,k}}{x_0 (1-\rho)^k}$, an unbiased estimator of $1/m$.
\end{enumerate}

A natural estimator of $q(\omega \cdot f)$ is given by $M^{-1}\sum_{n=1}^M \omega(x'_n) f(x'_n)$, where $x'_1,\ldots,x'_M$ are i.i.d.\ samples from $q$, independent of all the samples used to obtain the unbiased estimator of $Z^{-1}$. Finally we obtain
\begin{equation*}
  \hat{\pi}(f) = \hat{I} \times \left(\frac{1}{M} \sum_{n=1}^M \omega(x'_n) f(x'_n)\right).
\end{equation*}
The parameter $M$ should be tuned, relative to the tuning of each $Y_i$ (which itself is a $\ZN$ estimator). 
In the application of Section~\ref{subsec:applications:inverse_Z}, 
our interest is in estimating $\pi(f)$ for $f(x) = \omega(x)^{-1}$, which is equivalent to estimating $1/Z$,
so we do not need to estimate $q(\omega \cdot f)$, and we can simply use the $\hat{I}$ estimator of $1/Z$ in \eqref{eq:ccs_estimator}. 
We leave the question of how to optimally split the budget between estimating $Z^{-1}$ and estimating $q(\omega \cdot f)$ for future work.

\end{document}